\documentclass{amsart}
\usepackage{amsfonts}
\usepackage{graphicx}
\usepackage{url}
\usepackage{amssymb,amsmath,amsthm}
\usepackage{amsaddr, amssymb, epsfig}
\usepackage[active]{srcltx}

\usepackage{enumerate}

\usepackage[all]{xy}
\input xy
\xyoption{all}
\usepackage{xy}




\theoremstyle{plain}

\newtheorem{theorem}{Theorem}

\newtheorem{lemma}[theorem]{Lemma}

\newtheorem{corollary}[theorem]{Corollary}

\newtheorem{example}[theorem]{Example}

\usepackage{color}
\usepackage{mathrsfs}         
\usepackage[all]{xy}
\usepackage[mathscr]{euscript}
\let\euscr\mathscr \let\mathscr\relax
\usepackage[scr]{rsfso}



\newcommand{\Aut}[0]{\ensuremath{\mathsf{Aut}}}

\newcommand{\Inv}[0]{\ensuremath{\mathsf{Inv}}}
\newcommand{\shE}[0]{\ensuremath{\mathsf{shE}}}

\newcommand{\NP}[0]{\ensuremath{\mathsf{NP}}}

\newcommand{\coNP}[0]{\ensuremath{\mathsf{co\mbox{-}NP}}}

\newcommand{\Logspace}[0]{\ensuremath{\mathsf{Logspace}}}

\newcommand{\Pspace}[0]{\ensuremath{\mathsf{Pspace}}}

\newcommand{\FO}[0]{\ensuremath{\mathsf{FO}}}

\newcommand{\mylogic}{\ensuremath{\{\exists, \forall, \wedge,\vee \} \mbox{-}\mathsf{FO}}}
\newcommand{\posFO}{\ensuremath{\{\exists, \forall, \wedge,\vee,= \} \mbox{-}\mathsf{FO}}}
\newcommand{\tuple}[1]{\ensuremath{\mathbf{#1}}}
\newcommand{\notsubseteq}{\ensuremath{ \subseteq \hspace{-3mm} / \hspace{2mm} }}

\newcommand{\she}[3]{
\resizebox{!}{.4cm}{
\ensuremath{
\begin{array}{c|c}
1 & #1 \\
\hline
2 & #2 \\
\hline
3 & #3 
\end{array}
}
}
}

\newcommand{\shefour}[4]{
\resizebox{!}{.5cm}{
\ensuremath{
\begin{array}{c|c}
1 & #1 \\
\hline
2 & #2 \\
\hline
3 & #3 \\
\hline
4 & #4 
\end{array}
}
}
}

\newcommand{\shefive}[5]{
\resizebox{!}{.6cm}{
\ensuremath{
\begin{array}{c|c}
1 & #1 \\
\hline
2 & #2 \\
\hline
3 & #3 \\
\hline
4 & #4 \\
\hline
5 & #5 
\end{array}
}
}
}

\newcommand{\shee}[2]{
\resizebox{!}{.4cm}{
\ensuremath{
\begin{array}{c|c}
1 & #1 \\
\hline
2 & #2 \\
\end{array}
}
}
}

\markboth{C. Carvalho \& B. Martin}
{The lattice and semigroup structure of multipermutations}
\begin{document}

%
%

\title{The lattice and semigroup structure of multipermutations}
\author{Catarina Carvalho}

\address{Department of Physics, Astronomy and Mathematics\\ University of Hertfordshire, College Lane, Hatfield AL10 9AB}
\email{c.carvalho2@herts.ac.uk}

\author{Barnaby Martin}

\address{Department of Computer Science, Durham University\\
Science Labs, South Road, Durham, DH1 3LE, UK}

\email{ barnabymartin@gmail.com}


\maketitle

\begin{abstract}
We study the algebraic properties of binary relations whose underlying digraph is smooth, that is has no source or sink. Such objects have been studied as surjective hyper-operations (shops) on the corresponding vertex set, and as binary relations that are defined everywhere and whose inverse is also defined everywhere. In the latter formulation, they have been called multipermutations. 

We study the lattice structure of sets (monoids) of multipermutations over an $n$-element domain. Through a Galois connection, these monoids form the algebraic counterparts to sets of relations closed under definability in positive first-order logic without equality. The first side of this Galois connection has been elaborated previously, we show the other side here. We study the property of inverse on multipermutations and how it connects our monoids to groups. We use our results to give a simple dichotomy theorem
for the evaluation problem of positive first-order logic without equality on the class of structures whose preserving multipermutations form a monoid closed under inverse. These problems turn out either to be in Logspace or to be
Pspace-complete. We go on to study the monoid of all multipermutations on an $n$-element domain,  under usual composition of relations. We characterise its Green relations, regular elements and show that it does not admit a generating set  that is polynomial on $n$. 
\end{abstract}


\section{Introduction}

A \emph{multipermutation} is a binary relation $\phi$ over a set $[n]=\{1,\ldots,n\}$ so that: for all $x \in [n]$ exists $y \in [n]$ such that $\phi(x,y)$; and for all $y \in [n]$ exists $x \in [n]$ such that $\phi(x,y)$. The term multipermutation originates with Schein in \cite{schein1987multigroups} but they were studied independently as \emph{surjective hyper-operations} (shops) in \cite{MadelaineM12,Martin10,MadelaineM18}.

In Universal Algebra there is a family of Galois connections that links relational expressivity in fragments of first-order logic with closure operators that generate particular types of algebra. The most important of these links expressivity in the fragment of first-order logic containing $\{\exists,\wedge,=\}$ (called \emph{primitive positive}, or pp-logic) with superpositional closure of finite arity operations in objects (called \emph{clones}) containing the projections. A survey of these Galois connections can be found in \cite{BornerPoschelSushchansky} (see Table 1) and a survey more oriented towards Computer Scientists containing much of the same material is \cite{BornerBasics} (see Tables 1 and 2).

The model-checking problem for primitive positive logic on a fixed relational structure $\mathcal{B}$ is known as the \emph{Constraint Satisfaction Probem} CSP$(\mathcal{B})$. The relevant Galois connection just noted gave rise to the so-called \emph{algebraic approach} to the computational complexity of CSP$(\mathcal{B})$. This approach culminated in the proof of the Feder-Vardi Conjecture that such problems for finite $\mathcal{B}$ are either in P or are NP-complete \cite{BulatovFVConjecture,ZhukFVConjecture}. One side of the Galois connection we discuss in this paper played a similar role in resolving the computational complexities of the corresponding model-checking problems for the fragment of first-order logic containing $\{\forall,\exists,\wedge,\vee\}$ \cite{MadelaineM18}. A complete Galois connection has two sides and it is the other side of this connection that we prove in this paper.

Galois connections have been leveraged in a variety of contexts related to the CSP in order to aid in classifications of computational complexity, where they have played or continue to play a key part in those projects. In the case of the \emph{Quantified CSP}, the relevant connection is noted in \cite{BBCJK} and involves surjective operations that preserve the corresponding relations (known in this case as well as the non-surjective case as \emph{polymorphisms}). The complexity classification for \emph{Quantified CSP} is famously wide open \cite{ZhukM20}. Another relative of the CSP, where the complexity classification is now known, is the \emph{Valued CSP}. Here the corresponding ``logic'' is no longer a fragment of first-order logic. The corresponding Galois connection was discovered gradually, culminating in the notion of \emph{weighted clones} in \cite{CohenCCJZ13}. The algebraic approach was pivotal in the final complexity classification for Valued CSPs \cite{ThapperZ16,KolmogorovKR17}, though the full power of weighted clones turned out not to be necessary (the more restricted notion of \emph{fractional polymorphism} was enough). A final relative of the CSP,  where the complexity classification is now still open, is the \emph{Promise CSP}. Again, the corresponding ``logic'' is not a fragment of first-order logic. The Galois connection here first appeared in \cite{Pip02} and was used subsequently in \cite{BrakensiekG18}. The algebraic approach here is ongoing and, indeed, promising (see \cite {BulinKO19}).

The Galois connection that we prove in this paper, related with the complexity of  $\mylogic(\mathcal{B})$, deals with 
sets of multipermutations $\phi$ over the set $B$ under which the relational $\mathcal{B}$ (whose domain is $B$) is \emph{invariant}, in the following sense. For a $k$-ary relation $R$ of $\mathcal{B}$ and $(x_1,\ldots,x_k) \in R$, always $(y_1,\ldots,y_k) \in R$, if $(x_1,y_1),\ldots,(x_k,y_k) \in \phi$. These sets, always containing the identity and closed under composition and subrelations (that are themselves also multipermutations), are known as \emph{down shop-monoids} (DSMs). In this paper we study the structure of the lattice of DSMs on a $k$-element domain. We give a full Galois connection proving an isomorphism between the lattice of DSMs (over a $k$-element domain) and the lattice of $k$-element structures closed under definabilty in positive first-order logic without equality. One half of this Galois connection was given in \cite{MadelaineM12}, the other half appears here for the first time. We study in particular the automorphism born of the inverse operation on multipermutations. DSMs that are closed under inverse have a fundamentally group-like structure -- what we call \emph{blurred permutation subgroups} (BPSs). Using this characterisation, we prove a dichotomy for our evaluation problem on structures that we term \emph{she-complementative}, i.e. whose monoid of permutations under which they are invariant, is closed under inverse. Specifically, these problems are either in \Logspace\ or are \Pspace-complete. This complexity classification follows from the general result of \cite{MadelaineM18} but our proof here is a great deal simpler.

Multipermutations have been studied earlier as monoids of binary relations, when considered closed only under composition of relations (not also subrelations). Schein \cite{schein1987multigroups} looked at sets $\Phi$ of binary relations $\phi$ defined everywhere (i.e. where every element of the domain appears in the first component) that are closed under inverses (i.e. $\phi^{-1}\in \Phi$ for all $\phi \in \Phi$), so both the domain and range of these relations are the full domain. Having termed these objects multipermutations, he went on to characterise involutive semigroups of multipermutations. Furthermore, he proved that every involutive semigroup of difunctional multipermutations is an inverse semigroup, and every inverse semigroup
 is isomorphic to an involutive semigroup of difunctional multipermutations.  Ten years later McKenzie and Schein \cite{mckenzie1997every}, after showing that every  semigroup is isomorphic to a transitive semigroup of binary relations, leave as an open problem the question ``Which semigroups are isomorphic to transitive semigroups of multipermutations?''  As far as we know this question is still open.

Bredikhin \cite{bredikhin1993representations} studied the monoid of all difunctional multipermutations on a $k$-element domain. Here the operation considered was not the usual composition of operations, since the composition of two difunctional relations is not necessarily difunctional. His idea on studying these monoids seemed to be to present a unification of the theories of inverse semigroups and lattices, see also \cite{bredikhin1996can}. 
These are, as far as we are aware, the only  articles mentioning  multipermutations. With their reappearance in the context mentioned above, the research for existing structural results did not throw much up, so we aim here to start filling this gap by studying the monoid of all multipermutations on a $k$-element domain.  

The monoid of (all) binary relations on a $k$-element domain has been widely studied since the 60s, as have some of its subsemigroups like the full transformation monoid, Hall monoid and, more recently, diagram semigroups. The fact that binary relations can also be represented as boolean square matrices and as graphs allows us to use techniques from different areas of mathematics to  study these monoids.
Drawing on similarities with previously studied monoids, we look at some structural properties of the monoid of multipermutations.
We characterise Green's relations, give an algorithm to compute regular elements, and show that this monoid, unlike the symmetric group,  does not admit a generating set that is of size polynomial in $k$. Finally we prove that blurred permutations are the completely regular difunctional multipermutations. 

There are still many questions to answer about the monoid of all multipermutations. What are its maximal subgroups? Is this semigroup better behaved in any way than the semigroup of all binary relations?  And a range of questions that can be posed about semigroups of multipermutations. For instance, are these semigroups, in any way,  better behaved than semigroups of binary relations?

This paper  is, partially, based on \cite{Martin10}. Some of the content from \cite{Martin10} is now obsolete and has been removed, other parts appear here (with minor issues) corrected. The section on the monoid of multipermutations, Section~\ref{sec:Mn}, is new to this paper.

\medskip
\noindent \textbf{Presentation}.
The paper is organised as follows. In Section~\ref{sec:prelims} we give the necessary preliminaries and introduce the Galois connection. In Section~\ref{sec:structure}, we discuss the structure of our lattices, with particular emphasis on an automorphism born of an inverse operation. We go on to prove the characterisation theorem that allows us to derive the complexity dichotomy for she-complementative structures. In Section~\ref{sec:Mn} we study the monoid of all  multipermutations on a $k$-element domain.

\section{Preliminaries}

\vspace{-.2cm}
\label{sec:prelims}
Let $\mathcal{B}$ be a finite structure, with domain $B$, over an at most countable relational signature $\sigma$. Let \mylogic\ be the positive fragment of first-order (fo) logic without equality. An \emph{extensional} relation is one that appears in the signature $\sigma$. We will usually denote extensional relations of $\mathcal{B}$ by $R$ and other relations by $S$ (or by some formula that defines them). In \mylogic\, the atomic formulae are exactly substitution instances of extensional relations. The problem $\mylogic(\mathcal{B})$ has:
\begin{itemize}
\item Input: a sentence $\varphi \in \mylogic$.
\item Question: does $\mathcal{B} \models \varphi?$
\end{itemize}
QCSP$(\mathcal{B})$ is the restriction of this problem to formulae involving no disjunction, what in our notation would be $\{ \exists, \forall, \wedge \}$-\FO. When $\mathcal{B}$ is of size one, the evaluation of any \FO\ sentence may be accomplished in \Logspace\ (essentially, the quantifiers are irrelevant and the problem amounts to the \emph{boolean sentence value problem}, see \cite{NLynch}). In this case, it follows that $\mylogic(\mathcal{B})$ is in \Logspace. Furthermore, by inward evaluation of the quantifiers, $\mylogic(\mathcal{B})$ is readily seen to always be in \Pspace.  

For a structure $\mathcal{B}$ define the complement structure $\overline{\mathcal{B}}$ to be over the same domain $B$ with relations which are the set-theoretic complements of those of $\mathcal{B}$. That is, for each $r$-ary $R$, $R^{\overline{\mathcal{B}}} = B^r \setminus R^{\mathcal{B}}$. Similarly, for a relation $R\subseteq B^r$, let $\overline{R}$ denote $B^r \setminus R$.

Consider the finite set $X=[n]:=\{1,\ldots,n\}$ and its power set $\euscr{P}(X)$. A \emph{hyper-operation} on $X$ is a function $f:X \rightarrow \euscr{P}(X) \setminus \{\emptyset\}$ (that the image may not be the empty set corresponds to the hyper-operation being \emph{total}, in the parlance of \cite{BornerTotalMultifunctions}). If the hyper-operation $f$ has the additional property that
\begin{itemize}
\item for all $y \in X$, there exists $x \in X$ such that $y \in f(x)$,
\end{itemize}
then we designate (somewhat abusing terminology) $f$ \emph{surjective}. A surjective hyper-operation in which each element is mapped to a singleton set is identified with a \emph{permutation} (bijection). 
Instead of operations we can think of these hyper-operations as being binary relations,  so that $f\subseteq X\times X$  satisfies
$$\forall x\in X \ \exists y_1, y_2\in X \ {\rm s.t.} \ (x, y_1), (y_2, x) \in f.$$ 
following the work of Schein \cite{Schein}, we denote  these {\it multipermutations}. We keep this terminology for both  the functions and relations.

A \emph{surjective hyper-endomorphism} (she) of a set of relations (forming the finite-domain structure) $\mathcal{B}$ over $X$ is a multipermutation $f$ on $X$ that satisfies, for all relations $R$ of $\mathcal{B}$,
\begin{itemize}
\item if $(x_1,\ldots,x_i) \in R$ then, for all $y_1\in f(x_1),\ldots,y_i \in f(x_i)$, $(y_1,\ldots,y_i) \in R$.
\end{itemize}
More generally, for $r_1,\ldots,r_k \in X$, we say $f$ is \emph{a she from} $(\mathcal{B};r_1,\ldots,r_k)$ to $(\mathcal{B};r'_1,\ldots,r'_k)$ if $f$ is a she of $\mathcal{B}$ and $r'_1 \in f(r_1), \ldots, r'_k \in f(r_k)$.
A she may be identified with a \emph{surjective endomorphism} if each element is mapped to a singleton set. On finite structures surjective endomorphisms are necessarily automorphisms.

\subsection{Galois Connections}

\subsubsection{Relational side.}

For a set $F$ of multipermutations on the finite domain $B$, let $\Inv(F)$ be the set of relations on $B$ of which each $f \in F$ is a she (when these relations are viewed as a structure over $B$). We say that $S \in \Inv(F)$ is invariant or is \emph{preserved} by (the multipermutations in) $F$. Let $\shE(\mathcal{B})$ be the set of shes of $\mathcal{B}$. Let $\Aut(\mathcal{B})$ be the set of automorphisms of $\mathcal{B}$.

Let $\langle \mathcal{B} \rangle_{\mylogic}$ and $\langle \mathcal{B} \rangle_{\posFO}$ be the sets of relations that may be defined on $\mathcal{B}$ in \mylogic\ and \posFO, respectively.

\begin{lemma}[\cite{LICS2009}]
\label{lemma:galois-connection-by-types}
Let $\tuple{r}:=(r_1,\ldots,r_k)$ be a $k$-tuple of elements of the finite-signature $\mathcal{B}$. There exists:
\begin{itemize}
\item[$(i).$] a formula $\theta_\tuple{r}(u_1,\ldots,u_k) \in \posFO$ s.t. $(\mathcal{B}, r'_1,\ldots, r'_k) \models \theta_\tuple{r}(u_1,\ldots,u_k)$ iff there is an automorphism from $(\mathcal{B}, r_1,\ldots, r_k)$ to $(\mathcal{B}, r'_1,\ldots, r'_k)$.
\item[$(ii).$] a formula $\theta_\tuple{r}(u_1,\ldots,u_k) \in \mylogic$ s.t. $(\mathcal{B}, r'_1,\ldots, r'_k) \models \theta_\tuple{r}(u_1,\ldots,u_k)$ iff there is a she from $(\mathcal{B}, r_1,\ldots, r_k)$ to $(\mathcal{B}, r'_1,\ldots, r'_k)$.
\end{itemize}
\end{lemma}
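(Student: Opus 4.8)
The two parts are instances of the same ``canonical formula'' phenomenon: in each case $\theta_\mathbf{r}$ should define exactly the set of tuples reachable from $\mathbf{r}$ by the relevant morphism (automorphism for $(i)$, she for $(ii)$), and the engine in both cases is a one-directional preservation theorem. The plan is to isolate that engine and then, for each part, either write $\theta_\mathbf{r}$ explicitly or show abstractly that the reachable set is definable.

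For $(i)$ I would exhibit the formula explicitly. Write $B=\{1,\dots,n\}$ and set
$$\theta_\mathbf{r}(u_1,\dots,u_k):=\exists z_1\cdots\exists z_n\ \Big[\ \big(\forall w\ \textstyle\bigvee_{i=1}^n w=z_i\big)\ \wedge\ \bigwedge_{R}\bigwedge_{(a_1,\dots,a_m)\in R^{\mathcal{B}}}R(z_{a_1},\dots,z_{a_m})\ \wedge\ \bigwedge_{j=1}^k u_j=z_{r_j}\ \Big],$$
where $z_i$ stands for the image of $i$ under a candidate map $\pi$. The conjunct $\forall w\,\bigvee_i w=z_i$ forces $\pi$ to be surjective, hence (a surjection of a finite set onto itself) a bijection; this device replaces the forbidden assertion that the $z_i$ are all distinct. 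The relational conjuncts force $\pi$ to map each tuple of each $R$ into $R$, and since $\pi$ is a bijection of a finite set this upgrades for free to preservation of $R$ and of its complement, so $\pi$ is an automorphism; the marking conjuncts say $\pi(r_j)=r'_j$. Reading the resulting equivalence in both directions gives $(i)$, and the formula lies in $\posFO$ since it uses only $\exists,\forall,\wedge,\vee,=$.

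For $(ii)$ equality is unavailable, so the surjectivity and marking tricks above break and I would argue less explicitly. First I would prove the preservation theorem: if $f$ is a she from a structure $\mathcal{A}$ to $\mathcal{A}'$ and $\phi\in\mylogic$, then $\mathcal{A}\models\phi$ implies $\mathcal{A}'\models\phi$; more precisely, by induction on $\phi$, whenever $\mathcal{A}\models\phi(\alpha)$ and $\alpha'(x)\in f(\alpha(x))$ for every free $x$, then $\mathcal{A}'\models\phi(\alpha')$. The atomic case is the defining property of a she, $\exists$ goes through because $f$ is total, and $\forall$ because $f$ is surjective. Applying this with $\mathcal{A}=\mathcal{A}'=\mathcal{B}$: for any $\psi\in\mylogic$ with $(\mathcal{B},\mathbf{r})\models\psi$, every she-target satisfies $\psi$ as well, so the reachable set $S_\mathbf{r}:=\{\mathbf{r}':\text{there is a she from }(\mathcal{B},\mathbf{r})\text{ to }(\mathcal{B},\mathbf{r}')\}$ (which contains $\mathbf{r}$ via the identity she) is contained in every $\mylogic$-definable relation containing $\mathbf{r}$.

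It then remains to show that $S_\mathbf{r}$ is itself $\mylogic$-definable, for then its defining formula is the desired $\theta_\mathbf{r}$. Since $\mathcal{B}$ is finite there are only finitely many $\mylogic$-definable $k$-ary relations, so it suffices to produce, for each $\mathbf{r}'\notin S_\mathbf{r}$, a separating formula $\psi_{\mathbf{r}'}\in\mylogic$ with $(\mathcal{B},\mathbf{r})\models\psi_{\mathbf{r}'}$ but $(\mathcal{B},\mathbf{r}')\not\models\psi_{\mathbf{r}'}$; the finite conjunction $\theta_\mathbf{r}:=\bigwedge_{\mathbf{r}'\notin S_\mathbf{r}}\psi_{\mathbf{r}'}$ is then true of $\mathbf{r}$, false of every $\mathbf{r}'\notin S_\mathbf{r}$, and (by the previous paragraph) true of every $\mathbf{r}'\in S_\mathbf{r}$, so it defines $S_\mathbf{r}$ exactly. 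Producing these separating formulas is the main obstacle and the only place where the absence of equality really bites: I would obtain them from the positive, equality-free model-comparison (Hintikka-style) game for $\mylogic$, whose rules — Spoiler moving in the source, Duplicator answering in the target, and only one-directional preservation of atoms being checked at the end — are tailored so that Duplicator wins precisely when a she exists. When no she exists, finiteness of $\mathcal{B}$ bounds the game and Spoiler's winning strategy unwinds into a bounded-quantifier-rank formula $\psi_{\mathbf{r}'}$ of the required positive, equality-free shape. Establishing this game/she correspondence, and the faithful translation of strategies into $\mylogic$ formulas, is where the real work lies.
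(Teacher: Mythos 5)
The paper you are working from never proves Lemma~\ref{lemma:galois-connection-by-types}: it is imported wholesale from \cite{LICS2009}, so your attempt can only be measured against what the statement requires (and against the cited source's construction). Your part $(i)$ is complete and correct, and is essentially the standard argument: the conjunct $\forall w\, \bigvee_i w=z_i$ forces surjectivity, hence bijectivity on a finite domain; a bijective endomorphism of a finite structure is automatically an automorphism (its inverse is a positive power of it); and the equality conjuncts pin the images of $r_1,\ldots,r_k$. Note this is exactly where the finite-signature and finite-domain hypotheses are spent, since the big conjunction must be finite.

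Part $(ii)$ is where the genuine gap lies. Your reduction --- preservation of \mylogic{} under shes, plus a finite conjunction of separating formulas $\psi_{\tuple{r}'}$, one per tuple outside the she-orbit --- is a sound architecture, but the production of the $\psi_{\tuple{r}'}$, which you yourself flag as ``where the real work lies,'' is precisely the content of the lemma; deferring it to an unestablished game correspondence leaves the proof without its core. Worse, the game you describe is the wrong one: with ``Spoiler moving in the source, Duplicator answering in the target'' as the only move type, Duplicator wins every finite-round game exactly when there is a \emph{total} fact-preserving hyper-operation carrying $\tuple{r}$ to $\tuple{r}'$; surjectivity never enters, so ``Duplicator wins iff a she exists'' is false as stated. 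Concretely, take $\mathcal{B}=([2];R)$ with $R=\{(1,1),(2,2)\}$, $\tuple{r}=(1,2)$, $\tuple{r}'=(1,1)$: the constant hyper-operation $x\mapsto\{1\}$ preserves $R$ and carries $\tuple{r}$ to $\tuple{r}'$, so Duplicator survives your one-sided game forever, yet the shes of $\mathcal{B}$ are just the two permutations and none maps $(1,2)$ to $(1,1)$; indeed $\forall u_3\,(R(u_1,u_3)\vee R(u_2,u_3))$ separates them, and its $\forall$ corresponds to a Spoiler move \emph{in the target}. The game must therefore have both move types (Spoiler in the source answered in the target, generating $\exists$; Spoiler in the target answered in the source, generating $\forall$ and enforcing surjectivity of the extracted hyper-operation). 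With that repair the argument closes: a position matters only through its set of played pairs, so there are at most $2^{n^2}$ relevant positions, the Duplicator-win regions of the $m$-round games stabilise at some $m^*\leq 2^{n^2}$, and saturating a stable position (adding, for every $b\in B$, a pair $(b,v)$ and, for every $v\in B$, a pair $(b,v)$ while staying stable) yields a she containing the pairs $(r_j,r'_j)$. The rank-$m^*$ Hintikka-style formula of $(\mathcal{B},\tuple{r})$ in \mylogic{} --- conjunctions $\bigwedge_{b\in B}\exists v$ for source moves, $\forall v\,\bigvee_{b\in B}$ for target moves, the one-directional positive atomic type at the leaves --- then serves directly as $\theta_{\tuple{r}}$, and your separating formulas become unnecessary.
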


\noindent The following is the main theorem of \cite{LICS2009}.
\begin{theorem}[\cite{LICS2009}]
\label{thm:galois-connection}
For a finite-signature structure $\mathcal{B}$ we have
\begin{itemize}
\item[$(i).$] $\langle \mathcal{B} \rangle_{\posFO} = \Inv(\Aut(\mathcal{B}))$ and
\item[$(ii).$] $\langle \mathcal{B} \rangle_{\mylogic} = \Inv(\shE(\mathcal{B}))$.
\end{itemize}
\end{theorem}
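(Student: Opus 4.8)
The plan is to establish each part as a pair of inclusions: the ``soundness'' inclusion $\langle \mathcal{B} \rangle \subseteq \Inv(\cdot)$ by a preservation argument induced on formulas, and the ``completeness'' inclusion $\Inv(\cdot) \subseteq \langle \mathcal{B} \rangle$ by exhibiting an explicit defining formula assembled from the type formulas of Lemma~\ref{lemma:galois-connection-by-types}. I will carry out part $(ii)$ in detail; part $(i)$ is the same argument with automorphisms replacing shes, the only new ingredient being that a bijection preserves the equality relation, which is exactly what licenses the extra atom $=$ in $\posFO$ (and conversely, the failure of a general multipermutation to preserve equality is why $\mylogic$ must drop it).

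For soundness I would prove, by structural induction, that \emph{every} formula $\psi(w_1,\ldots,w_m) \in \mylogic$ is preserved by each she $f$, meaning that $\mathcal{B} \models \psi(a_1,\ldots,a_m)$ together with $b_j \in f(a_j)$ for all $j$ implies $\mathcal{B} \models \psi(b_1,\ldots,b_m)$; applying this to the defining formula of a relation $S$ then yields $S \in \Inv(\shE(\mathcal{B}))$. The atomic case is precisely the defining clause of a she, and conjunction and disjunction are immediate from the induction hypothesis. The two quantifier cases are where the defining conditions of a multipermutation are consumed and where care is needed. For $\exists z\,\psi$: a witness $a$ for $\bar a$ has nonempty image $f(a)$ by \emph{totality}, so choosing any $b \in f(a)$ the induction hypothesis gives a witness for $\bar b$. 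For $\forall z\,\psi$: given an arbitrary $b$, \emph{surjectivity} of $f$ supplies some $a$ with $b \in f(a)$; since $\psi$ holds of $a$ over $\bar a$, the induction hypothesis gives $\psi$ of $b$ over $\bar b$, and as $b$ was arbitrary the universal statement holds at $\bar b$.

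For completeness, take $S \in \Inv(\shE(\mathcal{B}))$, say $k$-ary, and set
\[
\varphi_S(u_1,\ldots,u_k) \; := \; \bigvee_{\tuple{r} \in S} \theta_{\tuple{r}}(u_1,\ldots,u_k),
\]
a finite disjunction because $B$ is finite (so $S \subseteq B^k$ is finite), with each $\theta_{\tuple r}$ the $\mylogic$ type formula of Lemma~\ref{lemma:galois-connection-by-types}$(ii)$; hence $\varphi_S \in \mylogic$. I claim $\varphi_S$ defines $S$. If $\tuple r' \in S$ then, taking $\tuple r = \tuple r'$, the identity is a she from $(\mathcal{B},\tuple r')$ to itself, so $\mathcal{B}\models\theta_{\tuple r'}(\tuple r')$ and a fortiori $\mathcal{B}\models\varphi_S(\tuple r')$. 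Conversely, if $\mathcal{B}\models\varphi_S(\tuple r')$ then some disjunct $\theta_{\tuple r}(\tuple r')$ holds with $\tuple r \in S$, which by the lemma means there is a she $f$ of $\mathcal{B}$ with $r'_i \in f(r_i)$ for each $i$; since $S$ is preserved by every she, $\tuple r \in S$ forces $\tuple r' \in S$. Thus $\{\tuple r' : \mathcal{B}\models\varphi_S(\tuple r')\} = S$, so $S \in \langle \mathcal{B} \rangle_{\mylogic}$.

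Given Lemma~\ref{lemma:galois-connection-by-types}, there is no serious obstacle remaining: the theorem is essentially a repackaging of the type formulas against the preservation properties. The one step that genuinely demands care is the quantifier case of the soundness induction, where the $\exists$ case uses \emph{totality} of the hyper-operation and the $\forall$ case uses \emph{surjectivity} — precisely the two defining conditions of a multipermutation, and it is their combination that makes both quantifiers behave. The real content, of course, is hidden in Lemma~\ref{lemma:galois-connection-by-types}, whose formula $\theta_{\tuple r}$ must express, in positive first-order logic without equality, the existence of a she onto a given pointed expansion; that construction is what we are entitled to assume here.
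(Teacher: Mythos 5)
Your proof is correct and follows essentially the same route as the paper's (which is cited from \cite{LICS2009} but visible in the proof of Theorem~\ref{thm:galois-connection-inf}): soundness by structural induction on $\mylogic$-formulas using totality for $\exists$ and surjectivity for $\forall$, and completeness via the finite disjunction $\bigvee_{\tuple{r} \in S} \theta_{\tuple{r}}$ of type formulas from Lemma~\ref{lemma:galois-connection-by-types}, which is exactly ``the finite disjunction we previously built'' that the paper reuses for the countable-signature version.
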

We will need a countable-signature version of this theorem for our final lattice isomorphism.
\begin{theorem}
\label{thm:galois-connection-inf}
For a countable-signature structure $\mathcal{B}$ we have
\begin{itemize}
\item[$(i).$] $\langle \mathcal{B} \rangle_{\posFO} = \Inv(\Aut(\mathcal{B}))$ and
\item[$(ii).$] $\langle \mathcal{B} \rangle_{\mylogic} = \Inv(\shE(\mathcal{B}))$.
\end{itemize}
\end{theorem}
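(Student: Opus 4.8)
The plan is to deduce the countable-signature statement from its finite-signature counterpart, Theorem~\ref{thm:galois-connection}, by exploiting the fact that the domain $B$ is finite even though the signature need not be. I will describe part $(ii)$; part $(i)$ is entirely analogous, with $\Aut$ in place of $\shE$ and $\posFO$ in place of $\mylogic$. Enumerate the (at most countably many) relations of $\mathcal{B}$ and, for each $i$, let $\mathcal{B}_i$ be the finite-signature reduct of $\mathcal{B}$ to its first $i$ relations, so that the $\mathcal{B}_i$ form an increasing sequence of reducts exhausting $\mathcal{B}$.

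The inclusion $\langle \mathcal{B}\rangle_{\mylogic}\subseteq\Inv(\shE(\mathcal{B}))$ is the soundness direction and costs little. Any defining $\mylogic$-formula mentions only finitely many relation symbols, hence is a formula over some reduct $\mathcal{B}_m$, so the relation it defines lies in $\langle\mathcal{B}_m\rangle_{\mylogic}$, which equals $\Inv(\shE(\mathcal{B}_m))$ by the finite theorem. Since adding relations can only shrink the set of shes, we have $\shE(\mathcal{B})\subseteq\shE(\mathcal{B}_m)$, and a relation preserved by every she of $\mathcal{B}_m$ is a fortiori preserved by every she of $\mathcal{B}$.

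For the reverse inclusion I would use a stabilisation argument. Adding relations shrinks the she-set, so $\shE(\mathcal{B}_1)\supseteq\shE(\mathcal{B}_2)\supseteq\cdots$ is a descending chain with $\shE(\mathcal{B})=\bigcap_i\shE(\mathcal{B}_i)$. The crucial point is that $B$ is finite, so there are only finitely many multipermutations on $B$; a descending chain of subsets of a finite set must stabilise, whence there is an $N$ with $\shE(\mathcal{B})=\shE(\mathcal{B}_N)$. Applying the finite-signature theorem to $\mathcal{B}_N$ then gives $\Inv(\shE(\mathcal{B}))=\Inv(\shE(\mathcal{B}_N))=\langle\mathcal{B}_N\rangle_{\mylogic}$, and since $\mathcal{B}_N$ is a reduct of $\mathcal{B}$ we have $\langle\mathcal{B}_N\rangle_{\mylogic}\subseteq\langle\mathcal{B}\rangle_{\mylogic}$, yielding $\Inv(\shE(\mathcal{B}))\subseteq\langle\mathcal{B}\rangle_{\mylogic}$, as required.

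The only real subtlety, and the step I would check most carefully, is the identification $\shE(\mathcal{B})=\shE(\mathcal{B}_N)$: it relies on the finiteness of the \emph{domain} (not of the signature) to bound the number of candidate multipermutations and so force the descending chain to terminate. For part $(i)$ the same argument goes through because the automorphisms of $\mathcal{B}$ form a subset of the finite symmetric group on $B$, so the chain $\Aut(\mathcal{B}_1)\supseteq\Aut(\mathcal{B}_2)\supseteq\cdots$ likewise stabilises and the finite-signature result can be invoked on the stabilising reduct.
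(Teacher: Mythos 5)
Your proof is correct, and it takes a recognisably different --- more modular --- route than the paper's. The paper does not invoke the finite-signature Theorem~\ref{thm:galois-connection} as a black box; it re-enters its proof: given $S \in \Inv(\shE(\mathcal{B}))$, it forms the disjunction $\theta_S = \theta_{\tuple{r}_1} \vee \cdots \vee \theta_{\tuple{r}_m}$ of the orbit formulae from Lemma~\ref{lemma:galois-connection-by-types}, and applies the stabilisation argument to the \emph{defined relations} $\theta^i_{\tuple{r}_j} \subseteq B^k$ over the reducts $\mathcal{B}_i$: these decrease with $i$ inside the finite set $B^k$, hence stabilise at some $l_j$, and $\theta_S$ is then built over the reduct $\mathcal{B}_l$ with $l = \max_j l_j$. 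You apply the same compactness one level up, to the chain $\shE(\mathcal{B}_1) \supseteq \shE(\mathcal{B}_2) \supseteq \cdots$ inside the finite set of all multipermutations on $B$, extract a single reduct $\mathcal{B}_N$ with $\shE(\mathcal{B}_N) = \bigcap_i \shE(\mathcal{B}_i) = \shE(\mathcal{B})$, and then cite Theorem~\ref{thm:galois-connection} wholesale. Both arguments rest on the identical key fact --- it is the finiteness of the \emph{domain}, not the signature, that terminates the chains --- so the mathematical content is the same; but your packaging buys something. It is shorter, part $(i)$ follows verbatim by the same two lines, and it makes explicit a step the paper leaves implicit: why the stabilised relation $\theta^{l_j}_{\tuple{r}_j}$ equals $\theta_{\tuple{r}_j}$ computed in the full structure $\mathcal{B}$. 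That step needs a pigeonhole on the finitely many multipermutations (a tuple lying in every $\theta^i_{\tuple{r}_j}$ is witnessed by a she of each reduct, and one must find a single she working for all $i$ simultaneously); your equality $\shE(\mathcal{B}) = \shE(\mathcal{B}_N)$ delivers exactly this at once. What the paper's route keeps in view, and yours hides inside the citation, is the explicit \mylogic\ formula that defines $S$.
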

\begin{proof}
Again, Part $(i)$ is well-known and may be proved in a similar, but simpler, manner to Part $(ii)$, which we now prove. The direction $[\varphi(\tuple{v}) \in \langle \mathcal{B} \rangle_{\mylogic} \ $ $ \Rightarrow \ \varphi(\tuple{v}) \in \Inv(\shE(\mathcal{B}))]$ is proved as before.

For $[S \in \Inv(\shE(\mathcal{B})) \ \Rightarrow \ S \in \langle \mathcal{B} \rangle_{\mylogic}]$, we proceed similarly to before, but using finiteness of the domain $B$, which will rescue us from the pitfalls of an infinite signature. Consider the finite disjunction we previously built:
\[ \theta_S(u_1,\ldots,u_k) \ := \ \theta_{\tuple{r}_1}(u_1,\ldots,u_k) \vee \ldots \vee \theta_{\tuple{r}_m}(u_1,\ldots,u_k). \]
\noindent Let $R_1,R_2,\ldots$ be an enumeration of the extensional relations of $\mathcal{B}$. Let $\mathcal{B}_i$ be the reduct of $\mathcal{B}$ to the signature $\langle R_1,\ldots,R_i\rangle$. For $j \in [m]$ let $\theta^i_{\tuple{r}_j}(u_1,\ldots,u_k)$ be built as in Lemma~\ref{lemma:galois-connection-by-types}, but on the reduct $\mathcal{B}_i$. The relations $\theta^1_{\tuple{r}_j}(u_1,\ldots,u_k)$, $\theta^2_{\tuple{r}_j}(u_1,\ldots,u_k)$, \ldots are monotone decreasing on $B^k$ -- the shes must preserve an increasing number of extensional relations -- and therefore reach a limit $l_j$ s.t. $\theta^{l_j}_{\tuple{r}_j}(u_1,\ldots,u_k)$ =$\theta_{\tuple{r}_j}(u_1,\ldots,u_k)$. Let $l:=\max\{l_1,\ldots,l_m\}$ and build $\theta_S(u_1,\ldots,u_k)$ over the finite-signature reduct $\mathcal{B}_l$. The result follows.
\end{proof}
In the following, $\leq_{\Logspace}$ indicates the existence of a logspace many-to-one reduction.
\begin{theorem}
\label{thm:she-reduction}
Let $\mathcal{B}$ and $\mathcal{B}'$ be structures over the same domain $B$ s.t. $\mathcal{B}'$ is finite-signature.
\begin{itemize}
\item[$(i).$] If $\Aut(\mathcal{B}) \subseteq \Aut(\mathcal{B}')$ then $\posFO(\mathcal{B}') \leq_{\Logspace} \posFO(\mathcal{B})$.
\item[$(ii).$] If $\shE(\mathcal{B}) \subseteq \shE(\mathcal{B}')$ then $\mylogic(\mathcal{B}') \leq_{\Logspace} \mylogic(\mathcal{B})$.
\end{itemize}
\end{theorem}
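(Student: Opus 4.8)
The plan is to prove the two reductions by a standard ``pp-definability implies logspace reduction'' argument, adapted to our logics. Both parts are completely parallel, with Part $(i)$ being the simpler case (fewer logical connectives), so I would write out Part $(ii)$ in full and remark that Part $(i)$ follows \emph{mutatis mutandis}. The key observation is that the Galois connection of Theorem~\ref{thm:galois-connection-inf}$(ii)$ converts the semantic hypothesis $\shE(\mathcal{B}) \subseteq \shE(\mathcal{B}')$ into a syntactic one: since $\mathcal{B}'$ is finite-signature, each of its finitely many extensional relations $R'$ lies in $\Inv(\shE(\mathcal{B}')) \subseteq \Inv(\shE(\mathcal{B})) = \langle \mathcal{B} \rangle_{\mylogic}$, so every relation of $\mathcal{B}'$ is \mylogic-definable over $\mathcal{B}$.

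First I would fix, for each extensional relation $R'$ of $\mathcal{B}'$, a defining formula $\psi_{R'}(v_1,\ldots,v_{r'}) \in \mylogic$ with $R'^{\mathcal{B}'} = \{(b_1,\ldots,b_{r'}) : \mathcal{B} \models \psi_{R'}(b_1,\ldots,b_{r'})\}$; there are finitely many such formulae since the signature of $\mathcal{B}'$ is finite, and they can be hard-wired into the reduction as fixed data (independent of the input). The reduction itself takes an input sentence $\varphi \in \mylogic$ in the signature of $\mathcal{B}'$ and produces a sentence $\varphi^* \in \mylogic$ in the signature of $\mathcal{B}$ by syntactic substitution: replace each atomic subformula $R'(w_1,\ldots,w_{r'})$ occurring in $\varphi$ by the formula $\psi_{R'}(w_1,\ldots,w_{r'})$ obtained from $\psi_{R'}$ by renaming its free variables to $w_1,\ldots,w_{r'}$. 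Quantifiers, conjunctions and disjunctions of $\varphi$ are left untouched. One must check that $\varphi^*$ is again a sentence in \mylogic: substitution of a positive, equality-free, equality-free formula for an atom preserves membership in $\{\exists,\forall,\wedge,\vee\}\mbox{-}\FO$, and no free variables are introduced because each $\psi_{R'}$ is substituted with its free variables identified with the arguments of the atom it replaces.

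The correctness claim is $\mathcal{B}' \models \varphi \iff \mathcal{B} \models \varphi^*$, which I would prove by a routine induction on the structure of $\varphi$ (more precisely, on subformulae together with an assignment to their free variables), the base case being exactly the defining property of $\psi_{R'}$ and the inductive steps being immediate since the logical connectives and quantifiers are interpreted identically over the common domain $B$. Finally I would verify the resource bound: the substitution is local and the finitely many formulae $\psi_{R'}$ have sizes bounded by a constant depending only on $\mathcal{B}$ and $\mathcal{B}'$, so $\varphi^*$ can be produced from $\varphi$ by a single left-to-right pass with only logarithmically many bits of work-space needed to track variable indices and formula positions; hence the reduction is computable in \Logspace. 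The main obstacle, and the only genuinely non-routine point, is the passage from the semantic inclusion of she-monoids to the existence of the defining formulae $\psi_{R'}$; once Theorem~\ref{thm:galois-connection-inf} is invoked this reduces to bookkeeping, but it is essential that $\mathcal{B}'$ be finite-signature so that only finitely many defining formulae need be fixed in advance and the substitution stays of bounded local cost.
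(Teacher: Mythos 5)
Your proposal is correct and takes essentially the same route as the paper: use the Galois connection to turn $\shE(\mathcal{B}) \subseteq \shE(\mathcal{B}')$ into \mylogic-definability over $\mathcal{B}$ of the finitely many extensional relations of $\mathcal{B}'$, then reduce by substitution of predicates, with the finite signature of $\mathcal{B}'$ guaranteeing the \Logspace\ bound. The only (harmless, indeed slightly more careful) deviation is that you invoke the countable-signature Galois connection (Theorem~\ref{thm:galois-connection-inf}) where the paper cites the finite-signature version (Theorem~\ref{thm:galois-connection}), and you spell out the correctness induction and variable bookkeeping that the paper compresses into ``straightforward substitution of predicates.''
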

\begin{proof}
Again, Part $(i)$ is well-known and the proof is similar to that of Part $(ii)$, which we give. If $\shE(\mathcal{B}) \subseteq \shE(\mathcal{B}')$, then $\Inv(\shE(\mathcal{B}')) \subseteq \Inv(\shE(\mathcal{B}))$. From Theorem~\ref{thm:galois-connection}, it follows that $\langle \mathcal{B}' \rangle_{\mylogic}$ $\subseteq \langle \mathcal{B} \rangle_{\mylogic}$. Recalling that $\mathcal{B}'$ contains only a finite number of extensional relations, we may therefore effect a logspace reduction from $\mylogic(\mathcal{B}')$ to $\mylogic(\mathcal{B})$ by straightforward substitution of predicates.
\end{proof}

\subsubsection{Down-shop-monoids and the functional side.}

Consider the finite domain $X$. The \emph{identity} multipermutation $id_X$ is defined by $x \mapsto \{x\}$. Given multipermutations $f$ and $g$, define the \emph{composition} $g \circ f$ by $x \mapsto \{ z : \exists y \ z \in g(y) \wedge y \in f(x) \}$. Finally, a multipermutation $f$ is a \emph{sub-multipermutation} of $g$ -- denoted $f \subseteq g$ -- if $f(x) \subseteq g(x)$, for all $x$.  
A set of multipermutations on a finite set $B$ is a \emph{down-shop-monoid} (DSM), if it contains $id_B$, and is closed under composition and sub-multipermutations\footnote{Closure under sub-multipermutations is termed \emph{down closure} in \cite{BornerTotalMultifunctions}, hence the D in DSM.} (of course, not all sub-hyper-operations of a multipermutation are surjective -- we are only concerned with those that are). $id_B$ is a she of all structures with domain $B$, and, if $f$ and $g$ are shes of $\mathcal{B}$, then so is $g \circ f$. Further, if $g$ is a she of $\mathcal{B}$, then so is $f$ for all (surjective) $f \subseteq g$. It follows that $\shE(\mathcal{B})$ is always a DSM. If $F$ is a set of permutations, then we write $\langle F\rangle_G$ to denote the group generated by $F$. If $F$ is a set of multipermutations on $B$, then let $\langle F \rangle_{DSM}$ denote the minimal DSM containing the operations of $F$.  If $F$ is the singleton $\{f\}$,  then, by abuse of notation, we write $\langle f \rangle$ instead of $\langle \{f\} \rangle$. 
We will mark-up, e.g., the multipermutation $1 \mapsto \{1,2\}$, $2\mapsto \{2\}$, $3\mapsto \{1,3\}$ as $\she{12}{2}{13}$.

For a multipermutation $f$, define its inverse $f^{-1}$ by $x \mapsto \{y : x \in f(y)\}$. Note that $f^{-1}$ is also a multipermutation and $(f^{-1})^{-1}=f$, though $f \circ f^{-1}=id_B$ only if $f$ is a permutation. For a set of multipermutation $F$, let $F^{-1}:=\{f^{-1}:f \in F\}$.

A \emph{permutation subgroup} on a finite set $B$ is a set of permutations of $B$ closed under composition. It may easily be verified that such a set contains the identity and is closed under inverse. A permutation subgroup may be identified with a particular type of DSM in which all multipermutations have only singleton sets in their range.

\begin{theorem}
\label{thm:functional-galois}
Let $F$ be a set of permutations (Part $(i)$) or multipermutations (Part $(ii)$) on the finite domain $X$. Then
\begin{itemize} 
\item[$(i)$] $\langle F\rangle_{G} = \Aut(\Inv(F))$, and
\item[$(ii)$] $\langle F\rangle_{DSM} = \shE(\Inv(F))$.
\end{itemize}
\end{theorem}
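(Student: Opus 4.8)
The plan is to prove each part as a pair of inclusions, treating the single-valued case $(i)$ as a template for the multivalued case $(ii)$. One inclusion is immediate: since $id_X$ is a she of every structure on $X$ and shes are closed under composition and under (surjective) sub-multipermutations, $\shE(\Inv(F))$ is a DSM; as each $f\in F$ is by definition a she of every relation in $\Inv(F)$, we have $F\subseteq\shE(\Inv(F))$, whence $\langle F\rangle_{DSM}\subseteq\shE(\Inv(F))$ (and likewise $\langle F\rangle_G\subseteq\Aut(\Inv(F))$ in Part $(i)$, using that the automorphisms form a group containing $F$). For the reverse inclusion write $M:=\langle F\rangle_{DSM}$ and observe that $\Inv(F)=\Inv(M)$, because the operations generating a DSM preserve she-invariance. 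Thus it suffices to show that every DSM is \emph{she-closed}, i.e.\ $\shE(\Inv(M))\subseteq M$; the analogous reduction for groups asks that $\Aut(\Inv(M))\subseteq M$ for every permutation group $M$.

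The separating relation I would use is the orbit of the identity tuple. Fix $X=\{1,\ldots,n\}$ and the tuple $\tuple{t}:=(1,2,\ldots,n)$, and set
\[ R_M \ := \ \bigcup_{f\in M} f(1)\times f(2)\times\cdots\times f(n) \ \subseteq\ X^n. \]
A one-line check using closure of $M$ under composition shows $R_M\in\Inv(M)$: if $(y_1,\ldots,y_n)\in R_M$ is witnessed by $f\in M$ and $h\in M$, then $h\circ f\in M$ witnesses every tuple $(z_1,\ldots,z_n)$ obtained by choosing $z_i\in h(y_i)$, since $z_i\in(h\circ f)(i)$. Moreover $R_M$ is exactly the set of tuples reachable from $\tuple{t}$ by a single $f\in M$, and $\tuple{t}\in R_M$ via $id_X$. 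In Part $(i)$ every $f\in M$ is single-valued, so $R_M=\{(\pi(1),\ldots,\pi(n)):\pi\in M\}$; any $g\in\Aut(\Inv(M))$ preserves $R_M$, and applying $g$ to $\tuple{t}\in R_M$ forces $(g(1),\ldots,g(n))\in R_M$, i.e.\ $g=\pi$ for some $\pi\in M$. This settles $(i)$.

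For Part $(ii)$ take $g\in\shE(\Inv(M))$. Since $R_M\in\Inv(M)=\Inv(F)$ and $\tuple{t}\in R_M$, the she-condition applied to $\tuple{t}$ yields that every selection $(w_1,\ldots,w_n)$ with $w_i\in g(i)$ lies in $R_M$; equivalently the whole box $g(1)\times\cdots\times g(n)\subseteq R_M$. Because $M$ is down-closed, it now suffices to exhibit a single $f^{*}\in M$ with $g(i)\subseteq f^{*}(i)$ for all $i$, for then $g$ is a surjective sub-multipermutation of $f^{*}$ and hence $g\in M$.

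This last step is where I expect the real difficulty, and it is exactly the feature absent from Part $(i)$: multivaluedness. What the she-condition hands us is that the rectangle $g(1)\times\cdots\times g(n)$ is \emph{covered} by the union of the rectangles $f(1)\times\cdots\times f(n)$, $f\in M$; what we need is that it sits \emph{inside one} of them. For single-valued $g$ the box is a point and this is automatic (as in $(i)$), but a general rectangle can be covered by a union of rectangles without lying in any single one. The plan to close the gap is to isolate a separate lemma: for a DSM $M$, the family $\{f(1)\times\cdots\times f(n):f\in M\}$ has the property that every surjective rectangle contained in its union is contained in a single member. I would attempt this by induction on the total multivaluedness $\sum_i\bigl(|g(i)|-1\bigr)$ of $g$, using down-closure to shrink each witness to a minimal surjective sub-multipermutation still meeting the relevant selection, and using closure under composition to amalgamate the witnesses of two selections that differ in a single coordinate $j$ into one element of $M$ whose value at $j$ contains both competing values while its remaining coordinates still dominate $g$. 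Establishing this amalgamation -- showing that the DSM operations are exactly strong enough to upgrade a covering of a surjective rectangle into a domination -- is the technical heart of the argument and the step I expect to be the main obstacle, precisely because it has no counterpart in the single-valued setting of Part $(i)$.
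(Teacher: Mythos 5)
Your Part $(i)$, both easy inclusions, and the reduction to showing $\shE(\Inv(M))\subseteq M$ for $M:=\langle F\rangle_{DSM}$ all match the paper. But the step you defer to a lemma is not just the main obstacle -- the lemma is \emph{false}, so your route cannot be completed. Counterexample: on $[3]$ let $u:=\she{2}{1}{3}$, $v:=\she{12}{3}{3}$ and $M:=\langle u,v\rangle_{DSM}$. Under composition (the paper's convention $g\circ f$ = ``apply $f$ then $g$'') the semigroup generated by $u,v$ is the six-element set $\{id,\;u,\;v,\;v\circ u=\she{3}{12}{3},\;v\circ v=\she{123}{3}{3},\;(v\circ v)\circ u=\she{3}{123}{3}\}$, and $M$ is its down-closure, so every box of $M$ sits inside a box of one of these six elements. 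Now take $g:=\she{23}{13}{3}$. Its box $\{2,3\}\times\{1,3\}\times\{3\}$ is covered: $(2,1,3)$ lies in the box of $u$, $(2,3,3)$ in that of $v$, $(3,1,3)$ in that of $v\circ u$, and $(3,3,3)$ in that of $v\circ v$. Yet no single element of $M$ dominates $g$: the only one of the six whose first set contains $\{2,3\}$ is $v\circ v$, and its second set $\{3\}$ does not contain $\{1,3\}$. So $g\notin M$ although $g$'s surjective box is covered by $\bigcup_{f\in M}f(1)\times f(2)\times f(3)$; no amalgamation-by-composition can manufacture a dominating witness, because none exists. Worse still, one can check directly that this $g$ preserves the \emph{whole} relation $R_M$ (here $R_M=\{(1,2,3),(2,1,3)\}\cup\{(x,3,3):x\in[3]\}\cup\{(3,y,3):y\in[3]\}$, and $g$ maps selections from every one of these tuples back into $R_M$), so even strengthening your hypothesis from ``covering at the identity tuple'' to ``$g$ preserves $R_M$'' does not give $g\in M$: the $n$-ary orbit relation is intrinsically too weak to separate non-members from $M$, and any proof confined to it must fail.

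The paper closes this gap by raising the arity rather than by amalgamation. It uses an $n^2$-ary relation $R$ with $n$ coordinate slots for each domain element $i$: for each $f\in M$, $R$ contains all tuples whose entries in positions $(i,1),\ldots,(i,n)$ are arbitrary elements of $f(i)$; a tuple is a \emph{full coding} of a multipermutation $h$ if its $i$-th block realises the whole set $h(i)$, which is possible since $|h(i)|\le n$. Applying a she $g$ to the full coding of the identity, the she-condition forces in particular a full coding of $g$ to lie in $R$; its witness $f\in M$ then satisfies $g(i)\subseteq f(i)$ for \emph{all} $i$ simultaneously, and down-closure gives $g\in M$. In other words, repeating each coordinate $n$ times is exactly what converts your ``covered by a union of boxes'' into ``contained in a single box''; that arity blow-up, not an amalgamation lemma, is the missing idea.
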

\begin{proof}

Part $(i)$ is well-known but we give a proof for illustrative purposes.

[$\langle F\rangle_{G} \subseteq \Aut(\Inv(F))$.] By induction. One may easily see that if $f,g \in \Aut(\Inv(F))$ then $f \circ g \in \Aut(\Inv(F))$. Further, if $f \in \Aut(\Inv(F))$ then $f^{-1} \in \Aut(\Inv(F))$ as the set of automorphisms is closed under inverse.

[$\Aut(\Inv(F)) \subseteq \langle F\rangle_{G}$.] Let $|X|=n$. One may easily see that $\Inv(F)=\Inv(\langle F\rangle_G)$ (for the forward containment, note that inverse follows from the fact that $F$ is a set of bijections on a finite set). Let $R$ be the $n$-ary relation that lists the permutations in $\langle F\rangle_{G}$ (e.g., the identity appears as $(1,2,\ldots,n)$); $R$ is preserved by $\langle F\rangle_G$. We will prove $\Aut(\Inv(\langle F\rangle_G)) \subseteq \langle F\rangle_{G}$ by contraposition. If $g$ is a permutation not in $\langle F\rangle_{G}$, then $g \notin R$ and $g$ does not preserve $R$ as it maps the identity to $g$. Therefore $g \notin \Aut(\Inv(\langle F\rangle_G))$ and the result follows.

[Part $(ii)$.]

[$\langle F\rangle_{DSM} \subseteq \shE(\Inv(F))$.] By induction. One may easily see that if $f,g \in \shE(\Inv(F))$ then $f \circ g \in \shE(\Inv(F))$. Similarly for sub-multipermutations and the identity.

[$\shE(\Inv(F)) \subseteq \langle F\rangle_{DSM}$.] Let $|D|=n$. One may easily see that $\Inv(F)=\Inv(\langle F\rangle_{DSM})$. Let $R$ be the $n^2$-ary relation that lists the shes of $\langle F\rangle_{DSM}$ in the following manner. Consider the $n^2$ positions enumerated in $n$-ary, i.e. by $(i,j)$ s.t. $i,j \in [n]$. Each she $f$ gives rise to many tuples in which the positions $(i,1)$,\ldots, $(i,n)$ are occupied in all possible ways by the elements of $f(i)$. Thus, $f_0:=\she{12}{2}{3}$ generates the following eight tuples
\[
\begin{array}{c}
(1,1,1,2,2,2,3,3,3) \\
(1,1,2,2,2,2,3,3,3) \\
(1,2,1,2,2,2,3,3,3) \\
(1,2,2,2,2,2,3,3,3) \\
(2,1,1,2,2,2,3,3,3) \\
(2,1,2,2,2,2,3,3,3) \\
(2,2,1,2,2,2,3,3,3) \\
(2,2,2,2,2,2,3,3,3) \\
\end{array}
\]
Let $p_{i,j} \in [n]$ be the element at position $(i,j)$. We describe as a \emph{full coding} of $f$ any such tuple s.t., for all $i$, $\{p_{i,1},\ldots,p_{i,|D|}\}=f(i)$. In our example, all tuples except the first and last are full codings of $f_0$. Note that $R$ is preserved by $\langle F\rangle_{DSM}$.
We will prove $\shE(\Inv(\langle F\rangle_G)) \subseteq \langle F\rangle_{DSM}$ by contraposition. If $g$ is a shop not in $\langle F\rangle_{DSM}$, then $g$ does not appear fully coded in $R$ and $g$ does not preserve $R$ as it maps the identity to all tuples that are full codings of $g$. Therefore $g \notin \shE(\Inv(\langle F\rangle_{DSM}))$ and the result follows.
\end{proof}

\vspace{-0.5cm}

\subsection{Lattice isomorphism}

Consider sets of relations $\Gamma$ on the normalised domain $D=[n]$, closed under \mylogic-definability (such sets may be seen as countable signature structures $\mathcal{D}$). Let $\mathscr{R}_n$ be the lattice of such sets ordered by inclusion. Let the lattice $\mathscr{F}_n$ be of DSMs on the set $[n]$, again ordered by inclusion.
\begin{corollary}
\label{cor:galois}
The lattices $\mathscr{R}_n$ and $\mathscr{F}_n$ are isomorphic and the operators $\Inv$ and $\shE$ induce isomorphisms between them.
\end{corollary}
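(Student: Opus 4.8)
The plan is to obtain the corollary as a formal repackaging of the two Galois connections already proved, by exhibiting $\Inv$ and $\shE$ as mutually inverse, inclusion-reversing bijections between $\mathscr{F}_n$ and $\mathscr{R}_n$. The two round-trip identities are the heart of the matter. If $F \in \mathscr{F}_n$, then $F$ is a DSM, so $F = \langle F \rangle_{DSM}$, and Theorem~\ref{thm:functional-galois}$(ii)$ gives $\shE(\Inv(F)) = \langle F \rangle_{DSM} = F$. Conversely, if $\Gamma \in \mathscr{R}_n$, regard it as a countable-signature structure $\mathcal{D}$; since $\Gamma$ is closed under $\mylogic$-definability we have $\Gamma = \langle \mathcal{D} \rangle_{\mylogic}$, and Theorem~\ref{thm:galois-connection-inf}$(ii)$ yields $\Inv(\shE(\Gamma)) = \langle \mathcal{D} \rangle_{\mylogic} = \Gamma$. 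It is exactly here that the countable-signature strengthening is needed, since an element of $\mathscr{R}_n$ may carry infinitely many relations, which is why Theorem~\ref{thm:galois-connection-inf} rather than Theorem~\ref{thm:galois-connection} is invoked.

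Before concluding that these are inverse bijections one must check the two operators actually land in the stated lattices. For $\shE$ this is immediate: the text already records that $\shE(\mathcal{B})$ is always a DSM, so $\shE(\Gamma) \in \mathscr{F}_n$ for every $\Gamma$. For $\Inv$ I would check that $\Inv(F)$ is always closed under $\mylogic$-definability. This follows by combining the two theorems: $\langle \Inv(F) \rangle_{\mylogic} = \Inv(\shE(\Inv(F)))$ by Theorem~\ref{thm:galois-connection-inf}$(ii)$, and $\shE(\Inv(F)) = F$ for a DSM $F$ by the identity of the previous paragraph, whence $\langle \Inv(F) \rangle_{\mylogic} = \Inv(F)$, so $\Inv(F) \in \mathscr{R}_n$. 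With well-definedness in hand, the two round-trip identities say precisely that $\shE \circ \Inv = \mathrm{id}_{\mathscr{F}_n}$ and $\Inv \circ \shE = \mathrm{id}_{\mathscr{R}_n}$, so $\Inv$ and $\shE$ are inverse bijections between $\mathscr{F}_n$ and $\mathscr{R}_n$.

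Finally I would verify compatibility with the order. Both operators are inclusion-reversing directly from their definitions: enlarging a set of multipermutations can only shrink the set of relations all of them preserve, and symmetrically for $\shE$. A mutually inverse pair of inclusion-reversing bijections is an order anti-isomorphism, so $\Inv$ and $\shE$ exchange meets and joins and identify $\mathscr{R}_n$ with the order-dual of $\mathscr{F}_n$; this is the (dual) lattice isomorphism asserted by the corollary. I do not expect a genuine obstacle, since all the substance already resides in Theorems~\ref{thm:galois-connection-inf} and~\ref{thm:functional-galois}. The only points demanding care are the bookkeeping that the fixed points of the two closure operators $\Inv \circ \shE$ and $\shE \circ \Inv$ are exactly the members of $\mathscr{R}_n$ and $\mathscr{F}_n$ respectively (so that the bijection is defined on all of each lattice) and, as noted, the reliance on the countable-signature version to accommodate elements of $\mathscr{R}_n$ with infinitely many relations.
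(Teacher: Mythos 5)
Your proposal is correct and takes essentially the same route as the paper: the paper's own proof is the single line ``From the second parts of Theorems~\ref{thm:galois-connection-inf} and \ref{thm:functional-galois}'', and your write-up simply makes explicit the round-trip identities $\shE(\Inv(F))=F$ and $\Inv(\shE(\Gamma))=\Gamma$, the well-definedness checks, and the reliance on the countable-signature version that this citation implicitly bundles. Your added care in noting that $\Inv$ and $\shE$ are inclusion-reversing, so that strictly speaking they give a dual (anti-)isomorphism of the ordered lattices, is a fair refinement of a point the paper glosses over.
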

\begin{proof}
From the second parts of Theorems~\ref{thm:galois-connection-inf} and \ref{thm:functional-galois}.
\end{proof}
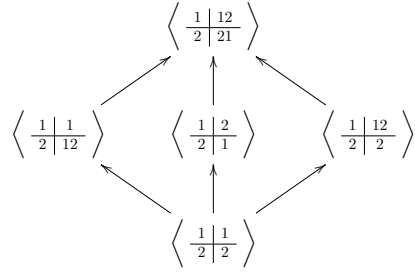
\begin{figure}
\[
\resizebox{!}{.4cm}{
\xymatrix{
& \mbox{$\left\langle \shee{12}{21} \right\rangle$} & \\
\mbox{$\left\langle \shee{1}{12} \right\rangle$} \ar[ur] & \left\langle \shee{2}{1} \right\rangle \ar[u] & \mbox{$\left\langle \shee{12}{2} \right\rangle$} \ar[ul] \\
& \left\langle \shee{1}{2} \right\rangle \ar[ul] \ar[u] \ar[ur] & \\
}
}
\]
\caption{The lattice $\mathscr{F}_2$.}
\end{figure}
The permutation subgroups form a lattice under inclusion whose minimal element contains just the identity and whose maximal element is the symmetric group $S_{|B|}$. As per Theorem~\ref{thm:she-reduction}, this lattice classifies the complexities of $\posFO(\mathcal{B})$ (again there is an isomorphism between this lattice and sets of relations closed under positive fo-definability). In the lattice of DSMs, $\mathscr{F}_n$, the minimal element still contains just $id_B$, but the maximal element contains all multipermutations. However, the lattice of permutation subgroups always appears as a sub-lattice within the lattice of DSMs. In the case of $\mathscr{F}_2$, Figure~1, we have $5$ DSMs, two of which are the subgroups of $S_2$. In the case of $\mathscr{F}_3$, Figure~2, we have $115$ DSMs, only six of which are the subgroups of $S_3$ -- so the lattice complexity jumps very quickly.
\begin{figure}
\begin{center}
\resizebox{!}{8cm}{
\includegraphics[angle=90]{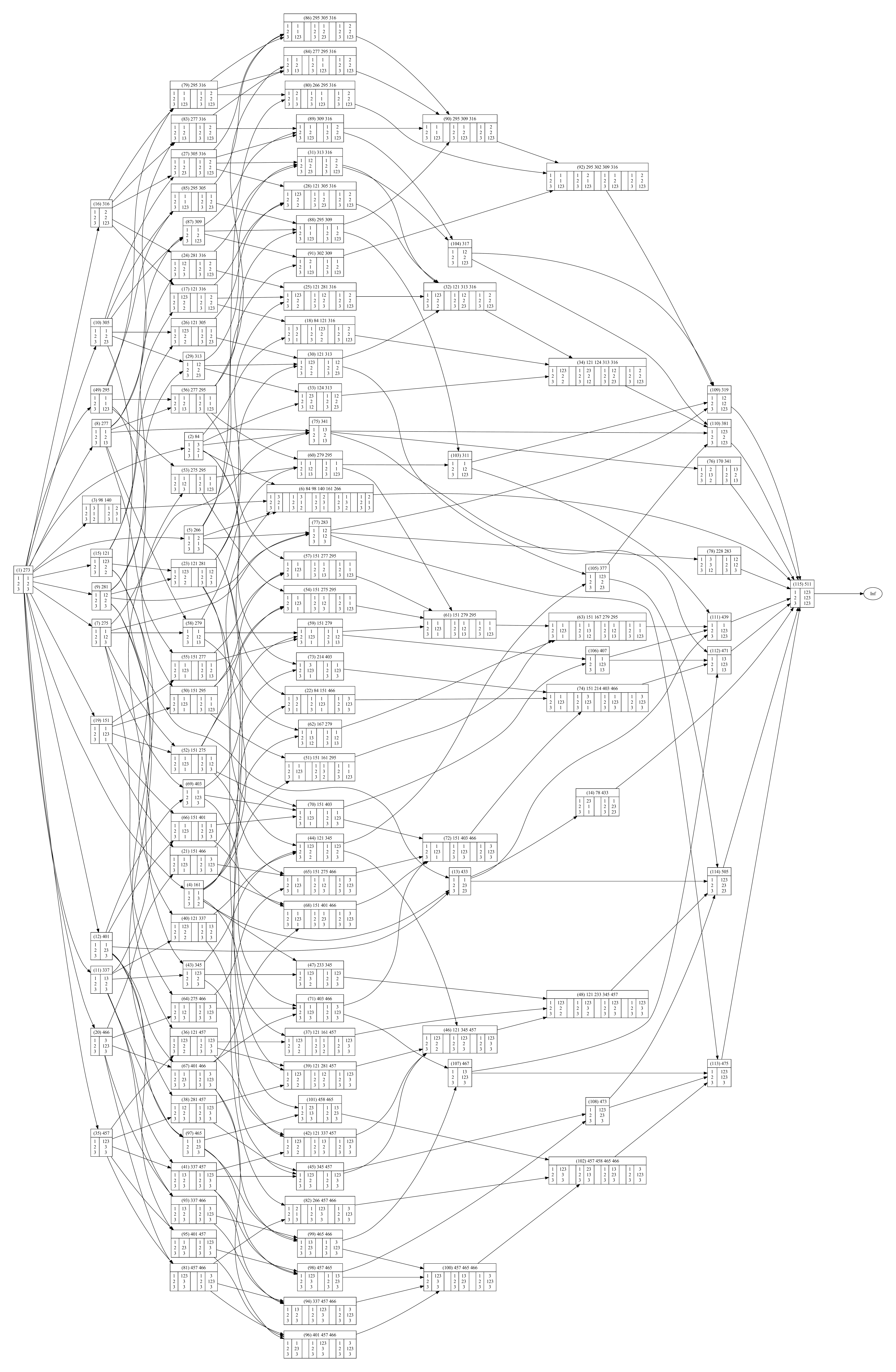}
}
\end{center}
\caption{The lattice $\mathscr{F}_3$. At the bottom is the DSM containing only the identity, at the top is the DSM containing all multipermutations. The authors are grateful to Jos Martin for calculating and drawing $\mathscr{F}_3$. The circular node at the top of the diagram is superfluous to the lattice, being a figment of the imagination of the graph drawing package.}
\end{figure}


\section{The structure of $\mathscr{F}_n$}
\label{sec:structure}
\subsection{Blurred permutation subgroups and symmetric multipermutations}

Many of the DSMs of $\mathscr{F}_n$ are reminiscent of subgroups of the symmetric group $S_{m}$ for some $m \leq n$.
We say that a multipermutation $f$ on the domain $[n]$ is a \emph{blurred permutation} if it may be built from (the multipermutation associated with) the permutation $g$ on the domain $[m]$ ($m \leq n$) in the following manner:
\begin{itemize}
\item[(*)] Let $P_1,\ldots,P_{m}$ be a partition of $[n]$ s.t. $i \in P_i$ for $i \in [m]$ and each $P_i$ may be listed $d_{i,1},\ldots,d_{i,l_i}$. Set $f(d_{i,1})=\ldots =f(d_{i,l_i})=P_{g(i)}$.
\end{itemize}

We note that if $f$ is a blurred permutation obtained as above with partition  $P_1,\ldots,P_{m}$  and permutation $g$,  then $f^{-1}$ is also a blurred permutation obtained with the same partition  $P_1,\ldots,P_{m}$  and permutation $g^{-1}$. This is easy to see if we think of $f^{-1}(d_{i,j})$ as the set of elements that get mapped to $d_{i,j}$ under $f$.

We say that a DSM  $N$ over domain $[n]$ is a \emph{blurred permutation subgroup} (BPS) if one may build it from (the DSM associated with) a subgroup $M$ of $S_{m}$, $m \leq n$ by replacing each permutation $g \in M$ by the blurred permutation $f$ created as in $(*)$, and then taking the closure under sub-multipermutations. We do not feel it necessary to elaborate on this construction save for the example that the group $M:=\langle \shee{2}{1}\rangle$
\begin{itemize}
\item becomes the BPS $N:=\langle \shefour{234}{1}{1}{1}\rangle$ when $P_1:=\{1\}$ and $P_2:=\{2,3,4\}$, and
\item becomes the BPS $N:=\langle \shefour{24}{13}{24}{13}\rangle$ when $P_1:=\{1,3\}$ and $P_2:=\{2,4\}$,
\end{itemize}
and the permutation $\she{1}{3}{2}$ becomes the blurred permutation $\shefour{1}{34}{2}{2}$ when $P_1:=\{1\}$, $P_2:=\{2\}$ and $P_3:=\{3,4\}$. From a blurred permutation (respectively, BPS) one may read the partitions $P_1,\ldots,P_{m}$, for some suitably chosen permutation (respectively, group) on domain $[m]$. A \emph{blurred symmetric group} is a BPS built in the manner described from a symmetric group.

With an arbitrary multipermutation $f$ on $D$, we may associate the digraph $\mathscr{G}_f$ on $D$ in which there is an edge $(x,y)$ if $f(x) \ni y$. The condition of totality ensures $\mathscr{G}_f$ has no sinks and the condition of surjectivity ensures $\mathscr{G}_f$ has no sources. $f$ contains the identity as a sub-multipermutation iff $\mathscr{G}_f$ is reflexive. There is an edge from $a$ to some $y$ in $\mathscr{G}_g$ and an edge from $y$ to $b$ in $\mathscr{G}_f$ iff there is an edge from $a$ to $b$ in $\mathscr{G}_{f \circ g}$. In this fashion, it is easy to verify that there is a directed path of length $n$ from $a$ to $b$ in $\mathscr{G}_f$ iff $b \in f^n(a)$.

Recall that  a multipermutation  (or binary relation) $f$ is  \emph{symmetric} if, for all $a$ and $b$, we have $a \in f(b)$ iff $b \in f(a)$, and it is {\it reflexive} if $a\in f(a)$ for all $a\in D$. Examples of symmetric multipermutations are $id_D$ and $\she{12}{12}{3}$. It not hard to see that $f$ is symmetric iff $f=f^{-1}$ iff $\mathscr{G}_f$ is undirected.

\begin{lemma}\label{diagonal}
The symmetric blurred permutations are the ones built in the manner $(*)$ from an identity multipermutation.  
\end{lemma}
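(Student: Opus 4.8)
The plan is to read the construction $(*)$ as a dictionary between a blurred permutation $f$ and a pair consisting of a partition $P_1,\ldots,P_m$ of $[n]$ with $i\in P_i$ together with a permutation $g$ of $[m]$, subject to $f(d)=P_{g(i)}$ for every $d\in P_i$. I would prove the two inclusions separately, using throughout the equivalence recorded just before the lemma that $f$ is symmetric iff $f=f^{-1}$, and the inverse observation that $f^{-1}$ is the blurred permutation built from the \emph{same} partition and the permutation $g^{-1}$.

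The forward inclusion is immediate. If $f$ is built from the identity permutation $g=\mathrm{id}$, then $f(d)=P_i$ for each $d\in P_i$, so $f$ is exactly the equivalence relation induced by the partition; for $a\in P_i$ and $b\in P_j$ we get $b\in f(a)$ iff $j=i$ iff $a\in f(b)$, so $f$ is symmetric. In the graph language, $\mathscr{G}_f$ is a disjoint union of complete reflexive graphs on the blocks, hence undirected. For the converse I would first show that the partition is intrinsic to $f$: two elements lie in the same block exactly when $f$ takes the same value on them, so $P_1,\ldots,P_m$ is the fibre partition of $f$, and the analogous computation for $f^{-1}$ yields the same partition. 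Once the partition is fixed, the permutation is determined by $g(i)=j \iff f(P_i)=P_j$. Since symmetry gives $f=f^{-1}$ and $f^{-1}$ is realised over the identical partition by $g^{-1}$, uniqueness of $g$ forces $g=g^{-1}$.

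The main obstacle is the final step: upgrading $g=g^{-1}$ to $g=\mathrm{id}$, that is, ruling out the nontrivial involutions. This is the genuine content of the lemma, since at this point $g$ is only known to be self-inverse, and an involution that pairs two distinct blocks $P_i\leftrightarrow P_{g(i)}$ would still be symmetric while failing to come from the identity. I would resolve it by exploiting the normalisation $i\in P_i$ built into $(*)$, which pins each block to its representative: the plan is to argue that symmetry together with this normalisation forces reflexivity $d\in f(d)$ for every $d$, and then $d\in P_i\cap P_{g(i)}$ gives $g(i)=i$ for all $i$, so $g=\mathrm{id}$. Verifying precisely that the representative condition $i\in P_i$ — rather than mere self-inverseness of the block permutation — is what collapses $g$ to the identity is the delicate point where the argument must be most careful; everything else is bookkeeping with the fibre partition.
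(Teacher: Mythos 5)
You are right about where the difficulty sits, and up to that point your outline coincides with the paper's own proof: both exploit the fact that $f^{-1}$ is the blurred permutation built from the same partition and $g^{-1}$, so symmetry ($f=f^{-1}$) yields $g=g^{-1}$. You are also right that upgrading $g=g^{-1}$ to $g=\mathrm{id}$ is the entire content of the statement; notably, the paper's proof simply asserts this transition (``so that $\sigma=\sigma^{-1}$. Thus $\sigma$ is the identity permutation'') with no supporting argument.

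However, your proposed resolution --- that symmetry together with the normalisation $i\in P_i$ forces reflexivity --- fails, and the gap cannot be closed, because the lemma as stated is false. Take $n=m=2$, $P_1=\{1\}$, $P_2=\{2\}$ (so $i\in P_i$ holds) and $g$ the transposition $(1\,2)$: the resulting blurred permutation $f$, with $f(1)=\{2\}$ and $f(2)=\{1\}$, is symmetric ($2\in f(1)$ and $1\in f(2)$), yet it is not reflexive and is not built from the identity. In general, for $a\in P_i$ and $b\in P_j$ one has $a\in f(b)\iff i=g(j)$ and $b\in f(a)\iff j=g(i)$, so a blurred permutation is symmetric precisely when $g$ is an involution; nontrivial involutions pairing distinct blocks are genuine counterexamples, exactly the case you flagged as needing to be ruled out. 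The statement becomes true, and your closing argument then works verbatim, once ``reflexive'' is added to the hypothesis: reflexivity gives $d\in f(d)=P_{g(i)}$ for $d\in P_i$, hence $P_i\cap P_{g(i)}\neq\emptyset$ and $g(i)=i$ for all $i$. This repaired statement is all the paper ever needs, since the lemma is only applied (in Lemma~\ref{lem:5}, via Lemma~\ref{maximal}) to the maximal \emph{reflexive} and symmetric multipermutation of a DSM.
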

\begin{proof}
Let $f$ be a symmetric blurred permutation obtained as in $(*)$ from the permutation $\sigma$ with partition $P_1, \ldots, P_m$. Recall that $f^{-1}$ is obtained from $\sigma^{-1}$ with the same partition.  We have, for any $i=1,\ldots, m$,  $f(P_i)=P_{\sigma(i)}$, and since $f$ is symmetric we also have  $f^{-1}(P_i)=P_{\sigma(i)}=P_{\sigma^{-1}(i)}$, so that $\sigma=\sigma^{-1}$. Thus $\sigma$ is the identity permutation.

\end{proof}

We note that not all symmetric multipermutations are blurred permutations, for example  $\she{12}{1}{3}$ is a symmetric multipermutation that is not a blurred permutation.

\begin{lemma}
\label{symref}
For all multipermutations $f$, $f \circ f^{-1}$ and $f^{-1} \circ f$ are symmetric and reflexive multipermutations.
\end{lemma}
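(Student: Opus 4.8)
The plan is to unfold the definitions of inverse and composition and simply read off both properties from the resulting membership condition, which turns out to be manifestly symmetric in its two arguments.

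First I would compute $(f \circ f^{-1})(b)$. Since $y \in f^{-1}(b)$ means exactly $b \in f(y)$, the definition of composition gives
\[ (f \circ f^{-1})(b) = \{ a : \exists y \ (a \in f(y) \wedge b \in f(y)) \}. \]
Thus $a \in (f \circ f^{-1})(b)$ holds iff there is some $y$ with $a, b \in f(y)$. This condition is patently symmetric in $a$ and $b$, which immediately yields $a \in (f \circ f^{-1})(b)$ iff $b \in (f \circ f^{-1})(a)$, i.e.\ symmetry. For reflexivity I would set $a = b$: the requirement is that some $y$ satisfy $a \in f(y)$, which is guaranteed by the surjectivity half of the multipermutation axiom for $f$.

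Next I would treat $f^{-1} \circ f$ the same way. Here $a \in f^{-1}(y)$ means $y \in f(a)$, so
\[ (f^{-1} \circ f)(b) = \{ a : \exists y \ (y \in f(a) \wedge y \in f(b)) \}, \]
that is, $a \in (f^{-1} \circ f)(b)$ iff $f(a)$ and $f(b)$ share a common element. Once more the condition is symmetric in $a$ and $b$, giving symmetry; reflexivity follows by taking $a = b$ and using that $f(a) \neq \emptyset$ by totality of $f$. Finally I would observe that reflexivity already forces both relations to be genuine multipermutations: a reflexive relation on $D$ is automatically total (each $a$ lies in its own image) and surjective (each $a$ is in the image of $a$), so the surjectivity and totality requirements come for free. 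Alternatively one can appeal to the facts, noted earlier in the excerpt, that $f^{-1}$ is itself a multipermutation and that the set of all multipermutations is closed under composition.

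I expect no serious obstacle here; the only point demanding care is keeping the directions straight when unfolding $f^{-1}$ inside the composition, since inverse and composition each swap the roles of the two arguments, and a sign error there would scramble the symmetric form. Once the membership condition is written in the symmetric shape above, both claims are immediate.
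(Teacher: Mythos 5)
Your proof is correct and follows essentially the same route as the paper's: both unfold the definitions of inverse and composition to exhibit the membership condition ($a,b \in f(y)$ for some $y$, respectively $f(a)\cap f(b)\neq\emptyset$) and read off symmetry directly. The only difference is one of thoroughness: the paper treats just $f \circ f^{-1}$ (noting the other case follows by replacing $f$ with $f^{-1}$) and dismisses reflexivity as easy, whereas you verify both composites, both properties, and the fact that the results are genuine multipermutations explicitly.
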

\begin{proof}
It is sufficient to prove that $f \circ f^{-1}$ is symmetric. Let $a \in f \circ f^{-1}(b)$. Then there exists $y$ s.t. $b \in f^{-1}(y)$ and $y \in f(a)$. Thus, $y \in f(b)$ and $a \in f^{-1}(y)$, i.e. $b \in f \circ f^{-1}(a)$. The fact that they are reflexive is easy to see.
\end{proof}
\noindent Let $f$ and $g$ be symmetric multipermutations on the domain $D$. The minimal symmetric multipermutation $h$ containing both $f$ and $g$ as a sub-multipermutation is said to be the \emph{join} of $f$ and $g$. The \emph{union} $(f \cup g)$ of $f$ and $g$ is the multipermutation given by, for all $x \in D$, $(f \cup g)(x):=f(x) \cup g(x)$.

\begin{lemma}
\label{lem:join}
Let $f$ and $g$ be symmetric and  reflexive multipermutations on the domain $[n]$. The join of $f$ and $g$ is $(f \cup g)^n=(f \circ g)^n=(g \circ f)^n$. \end{lemma}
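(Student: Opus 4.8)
The plan is to work entirely with composition and the sub-multipermutation ordering, taking $E := (f\cup g)^n$ as the candidate value, showing all three expressions collapse to it, and only then identifying $E$ as the join. The two facts I will lean on throughout are monotonicity of composition (if $a \subseteq b$ and $c \subseteq d$ then $a \circ c \subseteq b \circ d$) and the consequence of reflexivity that $id_{[n]} \subseteq f$ and $id_{[n]} \subseteq g$.

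First I would sandwich $f \circ g$ between $f \cup g$ and $(f\cup g)^2$. On one side, reflexivity gives $f = f \circ id_{[n]} \subseteq f \circ g$ and $g = id_{[n]} \circ g \subseteq f \circ g$, so $f \cup g \subseteq f \circ g$. On the other side, $f, g \subseteq f \cup g$ gives $f \circ g \subseteq (f \cup g) \circ (f \cup g) = (f\cup g)^2$. Applying monotonicity of composition $n$ times yields $(f\cup g)^n \subseteq (f\circ g)^n \subseteq (f \cup g)^{2n}$, and the same argument with $f$ and $g$ interchanged gives $(f\cup g)^n \subseteq (g\circ f)^n \subseteq (f\cup g)^{2n}$.

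The crux is then that the powers of $f \cup g$ stabilise at the $n$th step. Because $id_{[n]} \subseteq f \cup g$, the sequence $(f\cup g)^1 \subseteq (f\cup g)^2 \subseteq \cdots$ is increasing; pointwise each $(f \cup g)^k(a)$ is a nondecreasing chain of subsets of the $n$-element set $[n]$ that already contains $a$, so it can grow strictly at most $n-1$ times and is constant from $k = n$ onwards. Hence $(f\cup g)^n = (f\cup g)^{2n} = E$, and both sandwiches collapse to $(f\circ g)^n = (g \circ f)^n = (f\cup g)^n = E$, the stated equality. (Intuitively, via the digraph $\mathscr{G}_{f\cup g}$, which is undirected and reflexive by symmetry and reflexivity of $f,g$, this says precisely that $b \in E(a)$ iff $b$ lies in the connected component of $a$: self-loops pad every short walk to length exactly $n$, and any two vertices in a component are joined by a walk of length at most $n-1$.)

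It remains to identify $E$ with the join. I would check that $E$ is reflexive (it contains $id_{[n]}$), symmetric (from $(a\circ b)^{-1} = b^{-1}\circ a^{-1}$ and $(f\cup g)^{-1} = f\cup g$ one gets $((f\cup g)^n)^{-1} = (f\cup g)^n$), and idempotent — hence transitive — since $E \circ E = (f\cup g)^{2n} = E$; thus $E$ is an equivalence relation containing both $f$ and $g$. For minimality, if $h$ is a symmetric multipermutation containing $f$ and $g$ and closed under composition, then $h \supseteq f\cup g$ and, by transitivity, $h = h^n \supseteq (f\cup g)^n = E$. The point I expect to be the main obstacle is exactly this closure hypothesis: the bare union $f\cup g$ is already symmetric and reflexive, so $E$ is the least upper bound only once one works in the lattice of symmetric reflexive \emph{transitive} multipermutations (the equivalence relations, where the join is actually being taken). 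This transitivity is what forces the $n$-fold iteration, and it is the real content of the lemma; getting the stabilisation bound and reconciling it with the definition of join is where care is needed.
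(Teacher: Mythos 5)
Your proposal is correct and follows essentially the same route as the paper: the reflexivity sandwich $f \cup g \subseteq f \circ g \subseteq (f\cup g)^2$ (and its $g \circ f$ twin), stabilisation of the powers of $f \cup g$ by the $n$th step, and identification of $(f \cup g)^n$ with the connectivity (equivalence) closure of $f \cup g$ — the paper compresses the stabilisation into the single remark that $(f\cup g)^n = (f\cup g)^{n+1}$. Your closing caveat is also well taken: the paper's literal definition of join (minimal \emph{symmetric} multipermutation containing $f$ and $g$) would yield just $f \cup g$, and the paper's proof silently reinterprets the join as the path-connectivity relation, i.e.\ works in the lattice of reflexive symmetric transitive multipermutations exactly as you say, so your explicit minimality argument over transitive upper bounds patches a gap the paper leaves implicit.
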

\begin{proof}
Consider the union $f \cup g$. Since  $f$ and $g$ are both reflexive, we can see that $f \cup g \subseteq f \circ g \subseteq (f \cup g)^2$ and $f \cup g \subseteq g \circ f \subseteq (f \cup g)^2$. The join $h$ of $f$ and $g$ contains exactly those $a \in h(b)$ and $b \in h(a)$ for which there is a path in $f \cup g$ from $a$ to $b$ (and $b$ to $a$). By reflexivity of $f \cup g$ this is equivalent to there being an $n$-path between $a$ and $b$ in $(f \cup g)$, which is equivalent to there being an edge in $(f \cup g)^n$. Noting $(f \cup g)^n=(f \cup g)^{n+1}$, the result follows.   
\end{proof}

\begin{lemma}\label{maximal}
In each DSM there is a unique maximal  reflexive and symmetric multipermutation $g$. Furthermore $g$ is a blurred permutation.
\end{lemma}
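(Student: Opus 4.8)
The plan is to isolate the poset of reflexive and symmetric multipermutations sitting inside the given DSM $N$, produce its greatest element by a join argument, and then use maximality to force that element to be idempotent, hence an equivalence relation, which by Lemma~\ref{diagonal} is exactly a symmetric blurred permutation.

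For the first assertion I would consider the set $S$ of all reflexive and symmetric multipermutations contained in $N$. It is non-empty, since $id_{[n]} \in N$ is reflexive and symmetric, and it is finite. The crucial observation is that $S$ is closed under joins: if $f, g \in S$ then by Lemma~\ref{lem:join} their join equals $(f \circ g)^n$, which lies in $N$ because $N$ is closed under composition; moreover the join is symmetric by definition and reflexive because it contains the reflexive multipermutation $f$ as a sub-multipermutation. Forming the join of the finitely many elements of $S$ therefore produces an element $g \in S$ that contains every member of $S$ as a sub-multipermutation. This $g$ is simultaneously the greatest and the unique maximal element of $S$: any reflexive symmetric multipermutation in $N$ lies below it, so nothing can strictly contain it, and any second maximal element would have to lie below $g$ and hence coincide with it.

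For the second assertion I would show that this maximal $g$ is transitive. Consider $g \circ g$, which lies in $N$ by closure under composition. It is reflexive: since $g$ is reflexive we have $id_{[n]} \subseteq g$, whence $g = g \circ id_{[n]} \subseteq g \circ g$. It is symmetric: since $g = g^{-1}$ and $(g \circ g)^{-1} = g^{-1} \circ g^{-1}$, we obtain $(g \circ g)^{-1} = g \circ g$. Thus $g \circ g \in S$, and maximality of $g$ forces $g \circ g \subseteq g$; this inclusion is precisely transitivity. Hence $g$ is reflexive, symmetric and transitive, i.e.\ an equivalence relation on $[n]$, with $g(x)$ equal to the class of $x$. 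An equivalence relation is exactly a multipermutation built from the identity permutation via construction $(*)$ (its blocks being the classes), so by Lemma~\ref{diagonal} $g$ is a symmetric blurred permutation, completing the proof.

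The routine facts — that $N$ is closed under composition, that a composition of multipermutations is again a multipermutation, and the inverse-of-a-composition identity $(g \circ g)^{-1} = g^{-1} \circ g^{-1}$ — require no real work. The heart of the argument, and the step I would treat most carefully, is the passage from maximality to idempotency: once one checks that $g \circ g$ is again reflexive and symmetric, maximality of $g$ immediately yields $g \circ g \subseteq g$ and therefore transitivity, after which the identification with a blurred permutation via Lemma~\ref{diagonal} is immediate.
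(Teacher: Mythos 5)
Your proposal is correct and follows essentially the same route as the paper: uniqueness via Lemma~\ref{lem:join} together with closure of the DSM under composition, then maximality forcing idempotency ($g\circ g=g$, where the paper uses $g^n=g$), and finally identifying the resulting reflexive, symmetric, transitive relation --- an equivalence relation, i.e.\ a union of disjoint reflexive cliques --- as a blurred permutation built from the identity via $(*)$. The only cosmetic differences are that you build the greatest element directly as an iterated join rather than deriving a contradiction from two maximal elements, and you spell out the transitivity step more explicitly than the paper does.
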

\begin{proof}
Since each DSM contains the $id_n$ we know that all DSM contain ay least one reflexive and symmetric multipermutation. Suppose, for a contradiction that there exist two maximal reflexive and symmetric multipermutations, $f, g$,  on a DSM $M$. Then, by Lemma~\ref{lem:join} the join of $f, g$ also belongs to $M$, which contradicts the maximility of $f$ and $g$. Thus in each DSM there exists a unique maximal reflexive and symmetric multipermutation. We now need to show that it is a blurred permutation.  Since for any multipermutation $f$ we have $f\subseteq f^2$, given $g$ the maximal reflexive and symmetric multipermutation on a DSM $M$ with domain $[n]$, we have $g^n=g$ (since $g$ is maximal). To then see that $g$ is a blurred permutation we can either think of the graph $\mathscr{G}_g$ that is the union  of disjoint reflexive cliques, or we can use Theorem \ref{difunctional}, for $g^{-1}=g$ and $g=g^n$.
\end{proof}

 Let $g$ be a blurred permutation on the domain $[n]$ with associated partition $P_1,\ldots,P_{m}$. We say that a multipermutation $f$ \emph{respects} $g$ if \textbf{neither}
\begin{itemize}
\item[$(i)$] exist $a,b$ and $c,d$ s.t. $a,b$ are in the same partition $P_i$ and $c,d$ are in distinct partitions $P_j,P_k$, respectively, and $c \in f(a)$ and $d \in f(b)$, \textbf{nor}
\item[$(ii)$] exist $a,b$ and $c,d$ s.t. $a,b$ are in distinct partitions $P_j,P_k$, respectively, and $c,d$ are in the same partition $P_i$ and $c \in f(a)$ and $d \in f(b)$.
\end{itemize}
\begin{lemma}
\label{lem:respect}
If the multipermutation $f$ does not respect the blurred permutation $g$, then either $(f \circ g) \circ (g^{-1} \circ f^{-1})$ or $(f^{-1} \circ g^{-1}) \circ (g \circ f)$ is a reflexive and symmetric multipermutation that is not a sub-multipermutation of $g$.
\end{lemma}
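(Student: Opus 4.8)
The plan is to show the two displayed multipermutations are reflexive and symmetric essentially for free, and then to pin down exactly which of the conditions $(i)$, $(ii)$ in the definition of ``respects'' corresponds to each of them failing to sit below $g$. Throughout, abbreviate $h_1 := (f \circ g) \circ (g^{-1} \circ f^{-1})$ and $h_2 := (f^{-1} \circ g^{-1}) \circ (g \circ f)$.

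First I would observe that, since $(a\circ b)^{-1}=b^{-1}\circ a^{-1}$ for all multipermutations $a,b$, we have $h_1=(f\circ g)\circ(f\circ g)^{-1}$ and $h_2=(g\circ f)^{-1}\circ(g\circ f)$. Hence each is of the form $F\circ F^{-1}$ or $F^{-1}\circ F$, so Lemma~\ref{symref} immediately yields that $h_1$ and $h_2$ are reflexive and symmetric, with no further work. The entire content of the lemma is therefore the claim that at least one of them is not a sub-multipermutation of $g$.

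Next I would simplify $h_1$ and $h_2$ using associativity of composition together with the identity $g\circ g^{-1}=g^{-1}\circ g=\widetilde{g}$, where $\widetilde{g}$ is the reflexive symmetric blurred permutation on the same partition $P_1,\dots,P_m$ (built as in $(*)$ from the identity permutation), so that $\widetilde{g}(x)$ is simply the block containing $x$; this holds because $g$ and $g^{-1}$ are built from mutually inverse permutations on the blocks. This gives $h_1=f\circ\widetilde{g}\circ f^{-1}$ and $h_2=f^{-1}\circ\widetilde{g}\circ f$. Unwinding the definition of composition then shows that $z\in h_1(x)$ iff there are $v,w$ in a common block with $x\in f(v)$ and $z\in f(w)$, and dually that $z\in h_2(x)$ iff there are $v\in f(x)$ and $w\in f(z)$ lying in a common block.

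Finally I would compare with $g$. If $g$ is not reflexive, then its underlying block-permutation moves some block, so $x\notin g(x)$ for some $x$; since $h_1,h_2$ are reflexive this already gives $h_1\not\subseteq g$ and $h_2\not\subseteq g$, and we are done (without even using that $f$ fails to respect $g$). Otherwise $g$ is reflexive, its block-permutation is the identity, $g=\widetilde{g}$, and $g(x)$ is exactly the block of $x$. Reading off the descriptions above, $h_1\subseteq g$ says precisely that common-block elements of the domain have common-block images under $f$, which is the negation of $(i)$, while $h_2\subseteq g$ is the negation of $(ii)$. Since $f$ does not respect $g$, at least one of $(i)$, $(ii)$ holds, so at least one of $h_1,h_2$ is not a sub-multipermutation of $g$, as required. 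The main obstacle I anticipate is exactly this last matching: getting the quantifier pattern in ``$h_i\subseteq g$'' to line up correctly with conditions $(i)$ and $(ii)$, and keeping straight the roles of $f$ versus $f^{-1}$ so that $h_1$ is paired with $(i)$ and $h_2$ with $(ii)$; the remaining manipulations are formal.
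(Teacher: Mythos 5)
Your proposal is correct, and after the shared opening move it follows a genuinely different route from the paper. Both arguments begin identically: rewrite $h_1=(f\circ g)\circ(f\circ g)^{-1}$ and $h_2=(g\circ f)^{-1}\circ(g\circ f)$ and quote Lemma~\ref{symref}, so that only the failure of containment in $g$ remains to be shown. The paper finishes by exhibiting a witness: in case $(i)$ (with $a,b$ in one block and $c\in f(a)$, $d\in f(b)$ in distinct blocks) it notes $g(a)=g(b)$ and computes $h_1(c)\supseteq\{c,d\}$, a set which cannot fit inside the single block $g(c)$; case $(ii)$ is then handled by applying case $(i)$ to $f^{-1}$. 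You instead prove an exact characterisation: after simplifying $h_1=f\circ\widetilde{g}\circ f^{-1}$ and $h_2=f^{-1}\circ\widetilde{g}\circ f$ via the identity $g\circ g^{-1}=g^{-1}\circ g=\widetilde{g}$, you show (for reflexive $g$) that $h_1\subseteq g$ holds iff $(i)$ fails and $h_2\subseteq g$ holds iff $(ii)$ fails, disposing of non-reflexive $g$ separately using reflexivity of the $h_i$. Your route is longer but buys more: it yields a biconditional rather than a one-way implication; it handles an arbitrary blurred permutation $g$ uniformly, not only the reflexive symmetric one that actually occurs in the application (Theorem~\ref{thm:bps-inverse}); and your identity $g\circ g^{-1}=g^{-1}\circ g$ quietly repairs a small gap in the paper's case $(ii)$, where applying case $(i)$ to $f^{-1}$ naturally produces $(f^{-1}\circ g)\circ(g^{-1}\circ f)$ rather than the expression $(f^{-1}\circ g^{-1})\circ(g\circ f)$ of the statement --- the two coincide precisely because of that identity. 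The paper's argument, by contrast, fits in a few lines.
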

\begin{proof}
If $f$ does not respect $g$ because of Item $(i)$ above, noting that $g(a)=g(b)$,  then $h:=(f \circ g)\circ (f \circ g)^{-1}$ satisfies $h(c) \supseteq \{c,d\}$. So $h\not\subseteq g$, and the result follows from Lemma \ref{symref} as $(f \circ g)\circ (f \circ g)^{-1}=$ $(f \circ g)\circ (g^{-1} \circ f^{-1})$.

If $f$ does not respect $g$ because of Item $(ii)$ above, then $f^{-1}$ does not respect $g$ because of Item $(i)$ above. The result follows.
\end{proof}
\begin{lemma}
\label{lem:5}
If $g$ is the maximal reflexive and  symmetric multipermutation  in a DSM $M$ and $f$ is a blurred permutation that respects $g$, then $f \in M$ iff there exists $f' \subseteq f$ s.t. $f' \in M$.
\end{lemma}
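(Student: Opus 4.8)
The plan is to dispatch the two directions separately. The forward direction is immediate: if $f \in M$ then $f' := f$ witnesses the right-hand side, since $f \subseteq f$ and $f \in M$. All the substance lies in the converse, namely showing that a single sub-multipermutation $f' \subseteq f$ lying in $M$ already forces $f \in M$. The idea there is to use the block-blurring action of $g$ to build $f'$ back up to $f$ by composition, and then descend to $f$ by down-closure.

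First I would record the shape of $g$. By Lemma~\ref{maximal}, $g$ is a reflexive and symmetric blurred permutation, and by Lemma~\ref{diagonal} it is built in the manner $(*)$ from the identity permutation; so, writing $P_1,\ldots,P_m$ for its partition, we have $g(x)=P_i$ for every $x \in P_i$. Next I would unpack the hypothesis that $f$ respects $g$. Since $f$ respects $g$, configuration $(i)$ of the definition does not occur; applying this with $a=b$ shows that each individual set $f(a)$ lies inside a single block $P_j$, and applying it with general $a,b \in P_i$ shows that all the sets $f(a)$ for $a \in P_i$ lie in one common block, which I will call $P_{\pi(i)}$. Thus $f(P_i) := \bigcup_{a \in P_i} f(a) \subseteq P_{\pi(i)}$: the multipermutation $f$ carries each $g$-block into a single $g$-block.

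The crux is then to compute $h := g \circ f' \circ g$ and to check that $f \subseteq h$. Fix $x \in P_i$ and read the composition right-to-left: $g(x)=P_i$, then $f'(P_i)=\bigcup_{a \in P_i} f'(a)$ is a nonempty subset of $P_{\pi(i)}$ --- nonempty by totality of $f'$, and contained in $P_{\pi(i)}$ because $f' \subseteq f$ gives $f'(a) \subseteq f(a) \subseteq P_{\pi(i)}$ for each $a \in P_i$. Finally $g$ sends every element of $P_{\pi(i)}$ to the whole block $P_{\pi(i)}$, whence $h(x)=P_{\pi(i)}$. Since also $f(x) \subseteq P_{\pi(i)} = h(x)$ for every $x \in P_i$, we conclude $f \subseteq h$. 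Now $h \in M$ because $M$ is closed under composition and $g, f' \in M$; as $f$ is a multipermutation with $f \subseteq h$, closure of $M$ under sub-multipermutations yields $f \in M$, as required.

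I expect the main obstacle to be the bookkeeping in the second paragraph: extracting from the two-part, negatively phrased \emph{respects} hypothesis the clean statement that $f$ maps each $g$-block into a single $g$-block, together with the totality point that makes $g \circ f' \circ g$ collapse to exactly the $P$-block coarsening of $f$ (the full block $P_{\pi(i)}$) rather than to some proper subset of it. Once that is secured, the composition computation and the appeal to down-closure are routine, and notably the argument uses of $g$ only that it is the block-identity on the partition $f$ respects, with maximality of $g$ serving merely to guarantee that $g$ itself lies in $M$.
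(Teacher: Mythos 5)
Your proof is correct and follows essentially the same route as the paper: both arguments invoke Lemmas~\ref{diagonal} and \ref{maximal} to see that $g$ acts as the identity on its blocks, then compose $f'$ with $g$ to obtain a block-constant multipermutation of $M$ containing $f$, and finish by down-closure. The only cosmetic differences are that you sandwich on both sides ($g \circ f' \circ g$) where the paper uses the single composition $g \circ f'$, and that you extract the block-into-block property of $f$ directly from the definition of respecting $g$ rather than via the paper's observation that the partition of $f$ refines that of $g$.
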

\begin{proof}
The forward direction is trivial since $M$ is closed under sub-multipermutations. Assume now that there exists $f' \subseteq f$ s.t. $f' \in M$. Since $f$ is a blurred permutation that respects $g$ we can see that the partition of $f$ must be a refinement of the partition of $g$, i.e. if $P_1, \ldots, P_m$ is the partition of  $f$ and $Q_1, \ldots, Q_k$ is the partition of $g$ we have  for each $i=1, \ldots, m$, $P_i\subseteq Q_j$ for some $j=1, \ldots, k$.   Now, applying the multipermutations right to left, we have for any $a\in [n]$ \ $g\circ f'(a)=g(T)$ with $f'(a)=T\subseteq f(a)= P_i \subseteq Q_j$ for some $i=1, \ldots, m$ and $j=1, \ldots, k$, with $g(a)=g(Q_j)$. By Lemmas \ref{diagonal} and \ref{maximal}  it follows that $g$ is a blurred permutation obtained from the identity permutation, so $g(Q_j)=Q_j$. Then $f(a)= P_i \subseteq Q_j= g(f'(a))=g(T)=g(Q_j)$. Hence $f\subseteq g\circ f'$, so $f \in M$.
\end{proof}
 
\subsection{Automorphisms of $\mathscr{F}_n$}

The lattice $\mathscr{F}_n$ has a collection of very obvious automorphisms corresponding to the permutations of $S_n$, in which one transforms a DSM $M$ to $M'$ by the uniform relabelling of the elements of the domain according to some permutation. We will not dwell on these automorphisms other than to give the example that $M:=\langle \she{12}{2}{3} \rangle$ maps to $M':=\langle \she{1}{2}{23} \rangle$ under the permutation $\{1 \mapsto 3, 2 \mapsto 2, 3 \mapsto 1\}$.

There is another, more interesting, automorphism of $\mathscr{F}_n$, which we will call the \emph{inverse} automorphism. We do not close our DSMs under inverse because they were defined in order that the given Galois connections held. It is not hard to verify that if $M$ is a DSM, then $\{ f^{-1} : f \in M \}$ is also a DSM, which we call the \emph{inverse} and denote $M^{-1}$. It is also easy to see that $f=(f^{-1})^{-1}$ and $M=(M^{-1})^{-1}$, from where it follows that inverse is an automorphism of $\mathscr{F}_n$.

\subsection{Properties of inverse}

Call a structure $\mathcal{B}$ \emph{she-complementative} if $\shE(\mathcal{B})=\shE(\mathcal{B})^{-1}$.
Note that, if $F$ is a DSM, then so is $F^{-1}$. In fact, this algebraic duality resonates with the de Morgan duality of $\exists$ and $\forall$, and the complexity-theoretic duality of $\NP$ and $\coNP$ \cite{LICS2009}. 
 
\begin{lemma}
\label{lem:inverse}
For all $\mathcal{B}$, $\shE(\mathcal{B})=\shE(\overline{\mathcal{B}})^{-1}$.
\end{lemma}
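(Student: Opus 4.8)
The plan is to prove the equivalent pointwise statement that $f \in \shE(\mathcal{B})$ if and only if $f^{-1} \in \shE(\overline{\mathcal{B}})$. Unwinding the definition $\shE(\overline{\mathcal{B}})^{-1} = \{ g^{-1} : g \in \shE(\overline{\mathcal{B}})\}$ and using $(f^{-1})^{-1}=f$, one sees that $f \in \shE(\overline{\mathcal{B}})^{-1}$ is precisely $f^{-1} \in \shE(\overline{\mathcal{B}})$, so this equivalence is exactly the claimed set equality. Moreover, since both the inverse operation on multipermutations and set-theoretic complementation on structures are involutions (that is, $(f^{-1})^{-1}=f$ and $\overline{\overline{\mathcal{B}}}=\mathcal{B}$), it suffices to establish a single implication: $f \in \shE(\mathcal{B}) \Rightarrow f^{-1} \in \shE(\overline{\mathcal{B}})$. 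Applying this implication to the structure $\overline{\mathcal{B}}$ and the multipermutation $f^{-1}$ then yields the reverse implication automatically. I would also note at the outset that $f^{-1}$ is a multipermutation whenever $f$ is, so the surjectivity and totality conditions defining a she raise no difficulty.

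The key observation is the reading of the inverse: by definition $f^{-1}(x) = \{ y : x \in f(y)\}$, so $y \in f^{-1}(x)$ holds exactly when $x \in f(y)$. This lets me translate a she-condition stated in terms of $f^{-1}$ into one stated in terms of $f$ with the roles of source and image swapped. The second ingredient is that the she-condition for a complemented relation is naturally handled by contraposition: the requirement that $f^{-1}$ preserve $\overline{R} = B^r \setminus R$ (namely that $(x_1,\ldots,x_r) \in \overline{R}$ and $y_j \in f^{-1}(x_j)$ force $(y_1,\ldots,y_r) \in \overline{R}$) is logically equivalent, under contraposition, to the statement that $(y_1,\ldots,y_r) \in R$ together with $y_j \in f^{-1}(x_j)$ force $(x_1,\ldots,x_r) \in R$.

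To carry this out, I would assume $f \in \shE(\mathcal{B})$ and fix an arbitrary $r$-ary relation, appearing in $\overline{\mathcal{B}}$ as $\overline{R}$. Taking the contrapositive form above, suppose $(y_1,\ldots,y_r) \in R$ and $y_j \in f^{-1}(x_j)$ for each $j$. By the inverse reading, $y_j \in f^{-1}(x_j)$ is the same as $x_j \in f(y_j)$, so I have an $R$-tuple $(y_1,\ldots,y_r)$ together with $x_j \in f(y_j)$ for all $j$. Since $f$ is a she of $\mathcal{B}$, its defining condition applied to $R$ gives $(x_1,\ldots,x_r) \in R$, which is precisely the contrapositive conclusion required. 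Hence $f^{-1}$ preserves every $\overline{R}$, so $f^{-1} \in \shE(\overline{\mathcal{B}})$, completing the one implication and thus, by the involution symmetry, the whole lemma.

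I expect no serious obstacle: the content is a bookkeeping argument combining the involutivity of inverse and complement with a single contraposition. The only point demanding care is keeping the quantifier directions straight—the she-condition pushes $R$-membership forward along $f$, whereas the inverse reverses the edges, so that the forward condition for $f$ on $R$ becomes, after complementation and contraposition, the forward condition for $f^{-1}$ on $\overline{R}$.
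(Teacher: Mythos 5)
Your proof is correct and takes essentially the same approach as the paper: the paper's one-line proof simply asserts that, by the definition of she, $f$ is a she of $\mathcal{B}$ iff $f^{-1}$ is a she of $\overline{\mathcal{B}}$, which is exactly the pointwise equivalence you establish. Your contraposition-and-inverse-reading argument just spells out the details the paper leaves implicit.
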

\begin{proof}
It follows from the definition of she that $f$ is a she of $\mathcal{B}$ iff $f^{-1}$ is a she of $\overline{\mathcal{B}}$.
\end{proof}
\noindent We are now in a position to derive the following classification theorem.
\begin{theorem}
\label{thm:bps-inverse}
A DSM $N$ is a BPS iff $N=N^{-1}$.
\end{theorem}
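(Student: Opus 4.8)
The plan is to prove the two directions separately, with the reverse direction (that $N = N^{-1}$ forces $N$ to be a BPS) carrying the real content. For the forward direction, suppose $N$ is a BPS built from a subgroup $M \le S_m$ via a partition $P_1,\ldots,P_m$, so that every element of $N$ is a sub-multipermutation of one of the full blurred permutations $\hat f_\pi$ (the one with $\hat f_\pi(a)=P_{\pi(i)}$ for $a\in P_i$), $\pi\in M$. Using the observation recorded just before Lemma~\ref{diagonal} — that the inverse of a blurred permutation built from $\pi$ is the blurred permutation built from $\pi^{-1}$ with the \emph{same} partition — I would note $\hat f_\pi^{-1}=\hat f_{\pi^{-1}}$, and since $M$ is a group, $\pi^{-1}\in M$. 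Hence if $f\subseteq \hat f_\pi$ then $f^{-1}\subseteq \hat f_{\pi^{-1}}\in N$, so $f^{-1}\in N$ by down-closure. This gives $N^{-1}\subseteq N$, and since inverse is an involution (as established in the subsection on the inverse automorphism), $N=N^{-1}$.

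For the reverse direction, assume $N=N^{-1}$. First I would extract from Lemma~\ref{maximal} the unique maximal reflexive and symmetric $g\in N$, which by Lemmas~\ref{diagonal} and~\ref{maximal} is a blurred permutation built from the identity; let $P_1,\ldots,P_m$ be its blocks, so $g(a)=P_i$ whenever $a\in P_i$. The central step is to show that every $f\in N$ respects $g$. Suppose some $f\in N$ does not. Because $N=N^{-1}$ we have $f^{-1}\in N$, and $g=g^{-1}\in N$ since $g$ is symmetric; as $N$ is closed under composition, both multipermutations produced by Lemma~\ref{lem:respect} lie in $N$. But that lemma guarantees one of them is reflexive and symmetric while failing to be a sub-multipermutation of $g$. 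Maximality of $g$, however, forces every reflexive and symmetric element of $N$ to lie below $g$ (take the join with $g$, which is in $N$ by Lemma~\ref{lem:join} and must equal $g$), a contradiction. Hence every $f\in N$ respects $g$.

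Respecting $g$ then pins down the block structure of each $f\in N$: unwinding the two clauses defining ``respect'' shows that $f(P_i)=P_{\pi_f(i)}$ for a well-defined $\pi_f\colon[m]\to[m]$, which is injective and therefore a permutation, and that $f\subseteq \hat f_{\pi_f}$. Setting $M:=\{\pi_f:f\in N\}$, I would verify $M$ is a subgroup of $S_m$: it contains the identity (witnessed by $id_n$), it is closed under composition (one checks $\pi_{f_2\circ f_1}=\pi_{f_2}\pi_{f_1}$ and $N$ is composition-closed), and it is finite, so inverses come for free. It then remains to identify $N$ with the BPS $\tilde N$ built from $M$ on the partition $P_1,\ldots,P_m$. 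The inclusion $N\subseteq\tilde N$ is immediate, since each $f\subseteq \hat f_{\pi_f}$ and $\tilde N$ is down-closed. For $\tilde N\subseteq N$, the decisive tool is Lemma~\ref{lem:5}: each $\hat f_\pi$ with $\pi\in M$ is a blurred permutation that respects $g$, and by definition of $M$ there is some $f\subseteq \hat f_\pi$ with $f\in N$, so Lemma~\ref{lem:5} promotes this witness to $\hat f_\pi\in N$; closure of $N$ under sub-multipermutations then yields all of $\tilde N$.

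I expect the main obstacle to be the argument that non-respect of $g$ contradicts $N=N^{-1}$: it is precisely here that closure under inverse is indispensable, since Lemma~\ref{lem:respect} feeds $f^{-1}$ into the relevant compositions, and one must check that the resulting reflexive and symmetric multipermutation genuinely escapes $g$ so as to contradict maximality. The promotion step via Lemma~\ref{lem:5} is the second delicate point, as it is what upgrades the merely-contained witnesses $\pi_f$ into the full blurred generators demanded by the definition of a BPS.
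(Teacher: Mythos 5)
Your proof is correct and follows essentially the same route as the paper's: the forward direction via inverses of blurred permutations coming from the group, and the reverse direction via the maximal reflexive symmetric element $g$ (Lemmas~\ref{diagonal} and~\ref{maximal}), the contradiction argument combining Lemmas~\ref{lem:join} and~\ref{lem:respect} with closure under inverse, and the promotion of witnesses to full blurred permutations via Lemma~\ref{lem:5}. If anything, you are more careful than the paper in two spots it glosses over -- spelling out exactly why non-respect contradicts maximality, and explicitly verifying that $\{\pi_f : f \in N\}$ is a subgroup of $S_m$ -- both of which are welcome additions rather than deviations.
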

\begin{proof}
It is straightforward to see that a BPS $N$ is s.t. $N=N^{-1}$. Specifically, if $f \in N$ then $f$ is derived  from a multipermutation $g$ of a permutation in a group $M$. The inverse $f^{-1}$ may be derived in the same manner from the inverse $g^{-1}$ of $g$.

Now suppose $N$ is s.t. $N=N^{-1}$. Let $g$ be the maximal symmetric multipermutation in $N$. Let $P_1,\ldots,P_{m}$ be the associated partitions of $g$ in the manner previously discussed. Let $M$ be the blurred symmetric group formed from $S_{m}$ by the partitions $P_1,\ldots,P_{m}$. We claim $N \subseteq M$. This follows from Lemmas~\ref{lem:join} and \ref{lem:respect}, since, if $N\notsubseteq M$, then some multipermutation $f \in N$ fails to respect $g$, contradicting the maximality of the symmetric $g$.

This shows that all multipermutations $f \in N$ can be extended to a blurred permutation with partitions $P_1,\ldots ,P_m$. Let $f$ be a multipermutation in $N$ and $f'$ be a blurred permutation with partitions $P_1,\ldots,P_m$  such that $f\subseteq f'$.   By Lemma \ref{lem:5}  $f' \in N$. Thus $N$ is generated by precisely the blurred permutations $f'$, with partitions $P_1,\ldots ,P_m$, that contain the multipermutations  $f \in N$. Thus we see that $N$ is a BPS. 
\end{proof}

We may now give a complexity classification for she-complementative structures based on the following result of \cite{LICS2009}.
\begin{lemma}[\cite{LICS2009}]
\label{lem:bps}
If $\shE(\mathcal{B})$ is a BPS derived from $S_m$, for $m \geq 2$, then $\{\exists, \forall, \wedge,\vee \}$ $\mbox{-}\mathsf{FO}(\mathcal{B})$ is \Pspace-complete.
\end{lemma}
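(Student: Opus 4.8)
The statement has two halves. Membership of $\mylogic(\mathcal{B})$ in \Pspace\ was already observed in the preliminaries (by inward evaluation of the quantifiers), so the plan is to concentrate entirely on \Pspace-hardness.

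First I would replace $\mathcal{B}$ by a convenient finite-signature companion. Let $P_1,\ldots,P_m$ be the blocks witnessing that $\shE(\mathcal{B})$ is the BPS derived from $S_m$, and let $\mathcal{B}'$ be the structure on the same domain carrying the single binary relation $R:=\{(x,y) : x\in P_i,\ y\in P_j,\ i\neq j\}$ of distinct-block pairs. A routine check (of the kind used to compute $\shE$ of a disequality structure) shows that a multipermutation preserves $R$ exactly when it sends each block into a single block and permutes the blocks; that is, $\shE(\mathcal{B}')$ is again the blurred $S_m$ on $P_1,\ldots,P_m$, so $\shE(\mathcal{B})=\shE(\mathcal{B}')$. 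In particular $\shE(\mathcal{B})\subseteq\shE(\mathcal{B}')$, and since $\mathcal{B}'$ is finite-signature, Theorem~\ref{thm:she-reduction}$(ii)$ gives $\mylogic(\mathcal{B}')\leq_{\Logspace}\mylogic(\mathcal{B})$. It therefore suffices to prove that $\mylogic(\mathcal{B}')$ is \Pspace-hard, and here the blocks are immaterial: elements of one block are interchangeable under $\shE(\mathcal{B}')$, so $\mathcal{B}'$ behaves exactly like the disequality (complete-graph) structure on the $m$ block-values.

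Next I would reduce the canonical \Pspace-complete problem, truth of quantified Boolean formulae, to $\mylogic(\mathcal{B}')$. Given an instance $Q_1p_1\cdots Q_kp_k\,\phi$ with $\phi$ built from literals by $\wedge$ and $\vee$, I would use first-order variables $x_1,\ldots,x_k$ in place of $p_1,\ldots,p_k$, keep the quantifier prefix $Q_1x_1\cdots Q_kx_k$ verbatim, and prefix two existentially quantified reference elements $t,f$ together with the atom $R(t,f)$ forcing them into distinct blocks. A positive literal $p_i$ is translated by an atom stating that $x_i$ disagrees (in block) with $f$, a negated literal $\neg p_i$ by one stating that $x_i$ disagrees with $t$, and $\wedge,\vee$ are copied directly. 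When $m=2$ the relation $R$ is genuine disequality, so ``$x_i$ disagrees with $f$'' pins $x_i$ to the block of $t$ and the translation is faithful, with the global swap symmetry of the two blocks accounting for the arbitrary choice of $(t,f)$.

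The hard part is making this encoding correct for $m>2$. Here $R(t,f)$ no longer forces $t$ and $f$ to exhaust the domain, and ``$x_i$ disagrees with $f$'' no longer identifies $x_i$ with $t$; moreover one cannot relativise the universal quantifiers to a two-element range, because positive equality-free logic has neither negation nor implication. The plan is to confine each $x_i$ to the two chosen blocks by a positive gadget that exploits the alternation (using the reference points and further auxiliary quantified witnesses together with $R$-atoms), so that any value outside the two designated blocks is rendered harmless, and then to verify that the block-blurring and the residual $S_m$ action on blocks leave the truth value of the translated sentence unchanged---this last point being automatic, since that truth value is invariant under $\shE(\mathcal{B}')$ and hence depends only on the \mylogic-type of the assignment. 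Assembling these pieces yields a logspace reduction from quantified Boolean formulae, establishing \Pspace-hardness and completing the classification.
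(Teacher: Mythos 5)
First, a point of comparison: the paper does not prove this lemma at all --- it is imported wholesale from \cite{LICS2009} --- so your attempt can only be measured against the argument in \cite{LICS2009,MadelaineM12}. Your opening moves are sound and in the same spirit as that argument: \Pspace\ membership is immediate from the preliminaries; passing to the finite-signature companion $\mathcal{B}'$ (the ``blurred disequality'' structure) and invoking Theorem~\ref{thm:she-reduction}$(ii)$ is legitimate, since $\shE(\mathcal{B}')$ really is the same BPS (your ``routine check'' needs a small pigeonhole argument when $m\geq 3$, but it is true); and your QBF translation is correct for $m=2$.

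The case $m\geq 3$, however, is a genuine gap, and it is the entire technical content of the lemma beyond $m=2$. You correctly diagnose that the naive translation is unsound there (e.g.\ $\exists p_1\,(p_1\wedge\neg p_1)$ translates to a true sentence once a third block is available to host $x_1$), but the ``positive gadget'' that is supposed to repair this is never exhibited, and it cannot be assembled from the ingredients you name. For \emph{universal} variables an escape disjunct $(R(x_i,t)\wedge R(x_i,f))\vee\cdots$ does render off-block values harmless. The real obstruction is the \emph{existential} variables: you need a positive equality-free formula $\chi(x,t,f)$ that holds precisely when $x$ lies in the block of $t$ or of $f$, and no positive combination of $R$-atoms among $x,t,f$ can express this, because atomic truth here is monotone in block-distinctness: the all-distinct (cheating) pattern satisfies every $R$-atom satisfied by either intended pattern, so any negation-free quantifier-free formula true on the intended patterns is true on the cheating one. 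Hence auxiliary quantification is unavoidable, and constructing the quantified gadget is exactly the missing work. Two ways to close the gap: (a) observe that the ternary relation $\{(x,t,f):\mathrm{bl}(x)\in\{\mathrm{bl}(t),\mathrm{bl}(f)\}\}$ is invariant under the BPS, hence \mylogic-definable over the finite-signature $\mathcal{B}'$ by Theorem~\ref{thm:galois-connection}$(ii)$, which supplies $\chi$ (non-constructively, from the paper's own machinery); or (b) follow the route of \cite{LICS2009,MadelaineM12}: note that $\mathcal{B}'$ and $K_m$ satisfy the same \mylogic\ sentences (there are surjective hyper-morphisms both ways), and import \Pspace-hardness of QCSP$(K_m)$ for $m\geq 3$ (quantified graph colouring, \cite{BBCJK}), whose instances are already \mylogic\ sentences. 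As written, your proposal establishes the lemma only for $m=2$.
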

\begin{corollary}
\label{cor:dichotomy}
If $\mathcal{B}$ is she-complementative then $\mylogic(\mathcal{B})$ is either in \Logspace\ or is \Pspace-complete.
\end{corollary}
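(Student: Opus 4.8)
The plan is to convert the hypothesis into a structural statement about $\shE(\mathcal{B})$ and then split into exactly two cases. Since $\mathcal{B}$ is she-complementative we have $\shE(\mathcal{B})=\shE(\mathcal{B})^{-1}$, and $\shE(\mathcal{B})$ is always a DSM; hence by Theorem~\ref{thm:bps-inverse} the monoid $N:=\shE(\mathcal{B})$ is a blurred permutation subgroup. From $N$ I would read off, via its unique maximal reflexive and symmetric multipermutation (as in the proof of Theorem~\ref{thm:bps-inverse}), the partition $P_1,\ldots,P_m$ of $[n]$ together with the subgroup $M\leq S_m$ from which $N$ is built. The whole dichotomy will be governed by whether $m=1$ or $m\geq 2$.

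For the case $m=1$ (a single block, so $M$ is trivial), the blurred permutation built from the identity is the all-relation $x\mapsto[n]$, whose down-closure is the whole of $\mathscr{F}_n$; thus $\shE(\mathcal{B})$ consists of every multipermutation. I would then observe, using Theorem~\ref{thm:galois-connection-inf}$(ii)$, that $\Inv(\shE(\mathcal{B}))=\langle\mathcal{B}\rangle_{\mylogic}$ contains only trivial relations: if a $k$-ary $R$ is nonempty, applying the she condition with the all-relation forces $R=[n]^k$. Consequently every atomic formula takes a constant truth value independent of its arguments, so $\mylogic(\mathcal{B})$ collapses to a quantified positive Boolean combination of constants; this reduces to the Boolean sentence value problem and is in \Logspace, exactly as in the one-element discussion of Section~\ref{sec:prelims}.

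For the case $m\geq 2$ I would prove \Pspace-hardness by comparison with the full blurred symmetric group. Let $N_{\mathrm{full}}$ be the BPS built from $S_m$ with the same partition $P_1,\ldots,P_m$; since $M\leq S_m$, every generator of $N$ is a generator of $N_{\mathrm{full}}$, whence $N\subseteq N_{\mathrm{full}}$. Using the single $n^2$-ary full-coding relation from the proof of Theorem~\ref{thm:functional-galois}$(ii)$, I would exhibit a \emph{finite-signature} structure $\mathcal{B}'$ on $[n]$ with $\shE(\mathcal{B}')=N_{\mathrm{full}}$. By Lemma~\ref{lem:bps}, $\mylogic(\mathcal{B}')$ is \Pspace-complete. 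Since $\shE(\mathcal{B})=N\subseteq N_{\mathrm{full}}=\shE(\mathcal{B}')$ with $\mathcal{B}'$ finite-signature, Theorem~\ref{thm:she-reduction}$(ii)$ gives $\mylogic(\mathcal{B}')\leq_{\Logspace}\mylogic(\mathcal{B})$, so $\mylogic(\mathcal{B})$ is \Pspace-hard; as it is always in \Pspace, it is \Pspace-complete.

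The main obstacle is precisely the subcase $m\geq 2$ with $M$ a proper subgroup of $S_m$, where Lemma~\ref{lem:bps} does not apply on the nose: the work is to pass from the given BPS up to the larger $N_{\mathrm{full}}$ and to realise $N_{\mathrm{full}}$ by a finite-signature structure, so that the monotone she-reduction of Theorem~\ref{thm:she-reduction} can transport \Pspace-hardness downward. The finite-signature requirement is the one delicate point, and I expect the single coding-relation construction from Theorem~\ref{thm:functional-galois}$(ii)$ to settle it cleanly.
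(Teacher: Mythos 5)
Your proposal is correct, and its skeleton is the paper's own: Theorem~\ref{thm:bps-inverse} makes $\shE(\mathcal{B})$ a BPS; in the one-block case $\shE(\mathcal{B})$ is all multipermutations, every relation of $\mathcal{B}$ is then empty or full, and evaluation collapses to the Boolean sentence value problem, hence \Logspace; and Lemma~\ref{lem:bps} supplies \Pspace-completeness when $m\geq 2$. Where you genuinely differ is in the subcase you flag yourself: a BPS on $m\geq 2$ blocks built from a \emph{proper} subgroup $M<S_m$. The paper's proof passes over this case --- it says ``if $\shE(\mathcal{B})$ is a BPS formed from $S_m$, with $m\geq 2$, apply Lemma~\ref{lem:bps}'', which taken literally covers only full blurred symmetric groups; for instance $\shE(\mathcal{B})=\{id\}$ on a two-element domain is she-complementative, is the BPS of the trivial subgroup of $S_2$ with singleton blocks, and falls under neither of the paper's two cases as written. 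Your patch is sound and uses only results available in the paper: $N\subseteq N_{\mathrm{full}}$ for the blurred symmetric group $N_{\mathrm{full}}$ on the same partition; the $n^2$-ary coding relation from the proof of Theorem~\ref{thm:functional-galois}$(ii)$ yields a finite-signature $\mathcal{B}'$ with $\shE(\mathcal{B}')=N_{\mathrm{full}}$ exactly (that relation is preserved by $N_{\mathrm{full}}$ and violated by every multipermutation outside it); Lemma~\ref{lem:bps} then applies to $\mathcal{B}'$ on the nose; and the containment $\shE(\mathcal{B})\subseteq\shE(\mathcal{B}')$ points the right way for Theorem~\ref{thm:she-reduction}$(ii)$ to transport \Pspace-hardness down to $\mathcal{B}$, while membership in \Pspace\ is noted in the preliminaries. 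What your route buys is a self-contained argument that does not depend on how one reads ``derived from $S_m$'' in Lemma~\ref{lem:bps}; what the paper's shorter version buys is brevity, at the cost of leaving the proper-subgroup case to an implicit, broader reading of that lemma.
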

\begin{proof}
We know from Theorem~\ref{thm:bps-inverse} that $\shE(\mathcal{B})$ is a BPS. If it is a BPS formed from the trivial group $S_1$, then $\shE(\mathcal{B})$ contains all multipermutations. It is easy to see that $\mylogic(\mathcal{B})$ is in \Logspace\ (indeed one may evaluate the quantified variables in an instance arbitrarily - for more details see \cite{LICS2009}). If $\shE(\mathcal{B})$ is a BPS formed from $S_m$, with $m\geq 2$, then $\mylogic(\mathcal{B})$ is \Pspace-complete by Lemma~\ref{lem:bps}.
\end{proof}

\section{The structure of $\mathscr{M}_n$}
\label{sec:Mn}


Let $\mathscr{M}_n$ denote the monoid of all multipermutations on $X=[n]=\{1, 2, \ldots, n\}$ with the binary  operation  of composition of relations: given $\rho, \tau\in \mathscr{M}_n$ 
$$\rho  \circ \tau=\{(x,z) : \ \exists y \in X \ {\rm s.t.}\ (x,y)\in \rho \ {\rm and}\ (y,z)\in \tau\}.  $$
In this section we study the structure of  $(\mathscr{M}_n,  \circ )$  from a semigroup point of view.

Several well studied monoids are associated with  $\mathscr{M}_n$:
the symmetric group 
$S_n$ is a submonoid of $\mathscr{M}_n$; the Hall monoid $H_n$ (where every relations contains a permutation, see for example \cite{ki1974semigroup}) is also a submonoid of $\mathscr{M}_n$; while the transformation semigroup  $T_n$ (all maps from $X$ to $X$) is not contained in $\mathscr{M}_n$ since not all transformations are surjective. Let $\mathcal{B}_n$ be the semigroup of all $n\times n$ Boolean matrices, i.e. with entries from $\{0,1\}$ and the usual matrix multiplication with the assumption that $1+1=1$. This semigroup is isomorphic, in a natural way, to the semigroup of all binary relations on $[n]$, where the operation is the usual composition of relations, and $\mathscr{M}_n$ is (isomorphic to) a submonoid of $\mathcal{B}_n$. We can then think of $\mathscr{M}_n$ as the submonoid of $\mathcal{B}_n$ composed of all boolean matrices with at least one 1 in every  row and every column. We will slightly abuse notation and use $\mathcal{B}_n$ to denote both this monoid and the monoid of all binary relations on $[n]$.



\subsection{Green's relations}
Green's relations for $T_n$ and $S_n$ are trivial and can be found in any Semigroups textbook, in contrast, 
for the semigroup of all binary relations $\mathcal{B}_n$ no simple, direct,  charaterization is known. They were characterised, possibly among others, by \cite{zaretskii1962regular,zaretskii1963semigroup}
 in terms of lattices; \cite{plemmons1970semigroup} in terms of boolean matrices, and by  
 Adu \cite{adu1986green} using direct composition (and skeletons of the relations).
Similarly, for $\mathscr{M}_n$ a simple characterization has so far eluded us.

We can think of the  rows and columns of   an $n\times n$ Boolean matrix  from $\mathcal{B}_n$ as vectors on $\{0,1\}^n$, so these  can be, naturally,  added and compared  coordinate-wise using Boolean operations.

Let $\alpha \in \mathcal{B}_n$. The {\it row space} of $\alpha$, $V(\alpha)$, is the set of all possible sums of rows of $\alpha$, including the zero vector.  Analogously, the  {\it  column space} of $\alpha$, $W(\alpha)$, is the set of all possible sums of columns of $\alpha$, including the zero vector.

\begin{lemma}[Zaretskii]\label{zaretskii}
For any $\alpha, \beta \in \mathcal{B}_n$ :
\begin{enumerate}
\item $\alpha \mathcal{L} \beta \ \Leftrightarrow \ V(\alpha)=V(\beta)$;
\item $\alpha \mathcal{R} \beta \ \Leftrightarrow \ W(\alpha)=W(\beta)$.
\end{enumerate}
\end{lemma}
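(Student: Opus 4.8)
The plan is to unwind the two Green's relations into divisibility conditions and then translate left and right multiplication into operations on rows and columns. Since $\mathcal{B}_n$ is a monoid, $\alpha \mathcal{L} \beta$ means precisely that $\mathcal{B}_n\alpha = \mathcal{B}_n\beta$, i.e. there are $\gamma,\delta \in \mathcal{B}_n$ with $\gamma \circ \alpha = \beta$ and $\delta \circ \beta = \alpha$; dually, $\alpha \mathcal{R}\beta$ means $\alpha\mathcal{B}_n = \beta\mathcal{B}_n$. The one computation I would isolate as a sub-lemma is the behaviour of the row space under left multiplication. Writing composition as Boolean matrix product, the $i$-th row of $\gamma \circ \alpha$ is $\bigvee_{y:\gamma_{iy}=1}(\mathrm{row}\ y\ \mathrm{of}\ \alpha)$, a Boolean sum of rows of $\alpha$. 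As $V(\alpha)$ is closed under Boolean sums and contains the zero vector, every row of $\gamma\circ\alpha$ lies in $V(\alpha)$, and since $V(\gamma\circ\alpha)$ consists of Boolean sums of such rows, $V(\gamma\circ\alpha)\subseteq V(\alpha)$.

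With this in hand the forward implication of $(1)$ is immediate: if $\alpha\mathcal{L}\beta$, choose $\gamma,\delta$ as above; then $V(\beta)=V(\gamma\circ\alpha)\subseteq V(\alpha)$ and symmetrically $V(\alpha)\subseteq V(\beta)$, so $V(\alpha)=V(\beta)$.

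For the converse I would construct the witness explicitly. Suppose $V(\alpha)=V(\beta)$. Each row of $\beta$ lies in $V(\beta)=V(\alpha)$, so for each $i$ there is a set $S_i$ of row indices of $\alpha$ with $\bigvee_{k\in S_i}(\mathrm{row}\ k\ \mathrm{of}\ \alpha)=(\mathrm{row}\ i\ \mathrm{of}\ \beta)$; a zero row of $\beta$ is handled by $S_i=\emptyset$, which is exactly why it matters that the zero vector is included in $V(\alpha)$. Defining $\gamma \in \mathcal{B}_n$ by $\gamma_{ik}=1 \iff k\in S_i$ yields $\gamma\circ\alpha=\beta$, and symmetrically one obtains $\delta\circ\beta=\alpha$; hence $\mathcal{B}_n\alpha=\mathcal{B}_n\beta$, i.e. $\alpha\mathcal{L}\beta$. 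Here it is essential that we work in the full monoid $\mathcal{B}_n$, where $\gamma$ may be an \emph{arbitrary} Boolean matrix (possibly with zero rows); this freedom is precisely what the construction uses, and is the step I expect to be the main obstacle to get exactly right.

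Finally, part $(2)$ I would deduce from part $(1)$ by duality rather than repeating the argument. Transposition $\alpha\mapsto\alpha^{T}$ is an anti-automorphism of $\mathcal{B}_n$, since $(\alpha\circ\beta)^{T}=\beta^{T}\circ\alpha^{T}$, so it interchanges principal left and right ideals and therefore the relations $\mathcal{L}$ and $\mathcal{R}$; moreover it sends rows to columns, so $V(\alpha)=W(\alpha^{T})$. Thus $\alpha\mathcal{R}\beta \iff \alpha^{T}\mathcal{L}\beta^{T}\iff V(\alpha^{T})=V(\beta^{T})\iff W(\alpha)=W(\beta)$, which is $(2)$. Apart from the realizability construction for the converse, everything is routine bookkeeping with the Boolean matrix conventions.
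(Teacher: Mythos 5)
Your proof is correct. Note, however, that the paper does not prove this lemma at all: it is quoted as a known result of Zaretskii (cited there via \cite{zaretskii1962regular,zaretskii1963semigroup} and \cite[Lemma 1.2]{plemmons1970semigroup}), so there is no in-paper argument to compare against. What you have written is the standard self-contained proof: unwind $\mathcal{L}$ as equality of principal left ideals, observe that rows of $\gamma\circ\alpha$ are Boolean sums of rows of $\alpha$ (giving $V(\gamma\circ\alpha)\subseteq V(\alpha)$ and hence the forward direction), construct the witness $\gamma$ row-by-row for the converse, and get part (2) for free from the transposition anti-automorphism. It is worth pointing out that your converse construction is exactly the device the paper itself uses one step later, in its theorem characterising $\mathcal{L}$ and $\mathcal{R}$ on the submonoid $\mathscr{M}_n$: there the authors define $\rho$ by placing $1$'s in row $i$ at the positions $j_1,\ldots,j_l$ with $\alpha_i=\beta_{j_1}+\cdots+\beta_{j_l}$. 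Your closing remark that the construction exploits the freedom to use matrices with zero rows is also precisely the right thing to flag: that freedom is unavailable in $\mathscr{M}_n$, which is why the paper must replace $V(\alpha)$ by the truncated set $\langle R(\alpha)\rangle$ of nonzero vectors dominated by some row, and why the lemma's statement fails verbatim for multipermutations (as the paper's examples show).
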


Following this characterization given by Zarestkii \cite{zaretskii1963semigroup} for Green's relations in $\mathcal{B}_n$, which can also be found in \cite[Lemma 1.2]{plemmons1970semigroup},  we obtained the following characterization for $\mathscr{M}_n$:

\begin{theorem}
Given $\alpha \in \mathscr{M}_n$ let 
$R(\alpha)$  be the set of rows and 
 $C(\alpha)$ the set of columns, respectively,  of $\alpha$. Define $\langle R(\alpha)\rangle=\{ \rho \in V(\alpha)\backslash \{0 \}: \exists \alpha_j\in R(\alpha): \rho\le \alpha_j\}$  and $\langle C(\alpha)\rangle$ analogously.
For any $\alpha, \beta \in M_n$,  we have 
\begin{enumerate}
\item $\alpha \mathcal{L} \beta \ \Leftrightarrow \ \langle R(\alpha)\rangle=\langle R(\beta) \rangle$,
\item  $\alpha \mathcal{R} \beta \ \Leftrightarrow \ \langle C(\alpha)\rangle =\langle C(\beta)\rangle $,
\item $\alpha \mathcal{H} \beta \ \Leftrightarrow \  \langle R(\alpha)\rangle=\langle R(\beta) \rangle {\it and} \ \langle C(\alpha)\rangle =\langle C(\beta)\rangle$.
\end{enumerate}
\end{theorem}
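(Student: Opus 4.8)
The plan is to mirror Zaretskii's characterisation for $\mathcal{B}_n$ (Lemma~\ref{zaretskii}) but to correct it for the fact that $\mathscr{M}_n$ is only a \emph{submonoid} of $\mathcal{B}_n$. The key subtlety is that $\mathcal{L}$ and $\mathcal{R}$ in $\mathscr{M}_n$ are a priori \emph{smaller} relations than their ambient counterparts in $\mathcal{B}_n$: two elements $\alpha,\beta\in\mathscr{M}_n$ could be $\mathcal{L}$-related in $\mathcal{B}_n$ via multipliers that do not themselves lie in $\mathscr{M}_n$. So I would not simply quote Zaretskii; instead I would carry out the containment proofs directly, being careful that all witnessing multipliers are genuine multipermutations. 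This is why the statement replaces the full row space $V(\alpha)$ by the punctured ``principal'' set $\langle R(\alpha)\rangle$ of nonzero vectors dominated by an actual row (rather than by an arbitrary sum of rows): for multipermutations the zero row is forbidden and the relevant invariant is the set of rows together with everything beneath them.

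First I would prove part $(1)$. For the direction $\alpha\,\mathcal{L}\,\beta\Rightarrow\langle R(\alpha)\rangle=\langle R(\beta)\rangle$, recall $\alpha\,\mathcal{L}\,\beta$ means $\beta=\gamma\circ\alpha$ and $\alpha=\delta\circ\beta$ for some $\gamma,\delta\in\mathscr{M}_n^1$. In Boolean-matrix terms each row of $\beta=\gamma\alpha$ is a sum of rows of $\alpha$, hence lies in $V(\alpha)$; and because $\gamma$ is a multipermutation each row of $\gamma$ is nonzero, so every row of $\beta$ is a \emph{nonempty} sum of rows of $\alpha$ and therefore dominates at least one row of $\alpha$ --- but more to the point every vector in $\langle R(\beta)\rangle$ is dominated by a row of $\beta$, which is a sum of rows of $\alpha$; I would show this forces $\langle R(\beta)\rangle\subseteq\langle R(\alpha)\rangle$, and symmetrically the reverse, giving equality. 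For the converse, assuming $\langle R(\alpha)\rangle=\langle R(\beta)\rangle$ I would build an explicit $\gamma\in\mathscr{M}_n$ realising $\beta=\gamma\circ\alpha$: since each row of $\beta$ belongs to $\langle R(\beta)\rangle=\langle R(\alpha)\rangle$ it is dominated by, hence expressible using, rows of $\alpha$, which tells me how to define $\gamma(i)$; the work is to check $\gamma$ is total and surjective (i.e.\ a genuine multipermutation, with no empty or unhit row/column) and that the product recovers $\beta$ exactly rather than something larger. Part $(2)$ is then the left/right dual, obtained by transposing, i.e.\ by applying part $(1)$ to inverses via the inverse operation on multipermutations already studied in Section~\ref{sec:structure}.

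Part $(3)$ is immediate once $(1)$ and $(2)$ are in hand, since $\mathcal{H}=\mathcal{L}\cap\mathcal{R}$ by definition of Green's relations, so $\alpha\,\mathcal{H}\,\beta$ iff both row- and column-data agree.

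The main obstacle I anticipate is the converse directions, specifically verifying that the witnessing multiplier $\gamma$ (and its dual) can always be chosen \emph{inside} $\mathscr{M}_n$ rather than merely inside $\mathcal{B}_n$. Equality of the row spaces $V(\alpha)=V(\beta)$ is exactly Zaretskii's condition and would only give multipliers in $\mathcal{B}_n$; the passage to the punctured principal sets $\langle R(\alpha)\rangle$ is precisely what lets one pick nonzero rows for $\gamma$ and avoid introducing an all-zero row or leaving a column unhit. I would therefore spend most of the care showing that $\langle R(\alpha)\rangle=\langle R(\beta)\rangle$ supplies enough nonzero generators to assemble a totally-defined, surjective $\gamma$, and dually for columns --- this is the step where the multipermutation constraint genuinely bites and where the argument departs from the classical $\mathcal{B}_n$ case.
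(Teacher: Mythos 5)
Your converse direction, the duality argument for part (2), and part (3) via $\mathcal{H}=\mathcal{L}\cap\mathcal{R}$ all match the paper's proof: the paper likewise builds the multiplier row-by-row from Boolean decompositions of the rows of one matrix in terms of rows of the other. (Indeed the paper is terser than you about verifying that the constructed multiplier has no zero column; your worry there is legitimate, and it is settled by an augmentation observation: any row $\beta_j$ satisfies $\beta_j\in\langle R(\beta)\rangle=\langle R(\alpha)\rangle$, hence $\beta_j\le\alpha_i$ for some $i$, so $\beta_j$ may be added to the chosen decomposition of $\alpha_i$ without changing the Boolean sum, which lets one hit every column.)

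The genuine gap is in your forward direction, $\alpha\,\mathcal{L}\,\beta\Rightarrow\langle R(\alpha)\rangle=\langle R(\beta)\rangle$. Every fact you extract there comes from the single product $\beta=\gamma\alpha$: that rows of $\beta$ lie in $V(\alpha)$, and that, since rows of $\gamma$ are nonzero, each row of $\beta$ \emph{dominates} some row of $\alpha$. These facts cannot force $\langle R(\beta)\rangle\subseteq\langle R(\alpha)\rangle$, because membership in $\langle R(\alpha)\rangle$ demands domination by a \emph{single} row of $\alpha$, and lying below a Boolean sum of rows is strictly weaker. Concretely, let $\alpha$ be the $2\times 2$ identity, $\beta$ the all-ones matrix, and $\gamma=\beta$ (a multipermutation); then $\beta=\gamma\alpha$ and all your listed premises hold, yet $\langle R(\beta)\rangle=\{(1,1)\}$ while $\langle R(\alpha)\rangle=\{(1,0),(0,1)\}$. (Here $\alpha,\beta$ are of course not $\mathcal{L}$-related; the point is that your stated premises are satisfied while your conclusion fails, so the inference as written is invalid.) What is missing is any use of the second witness $\delta$ with $\alpha=\delta\beta$, and the property of the multipliers that genuinely bites is not totality (nonzero rows) but \emph{surjectivity} (nonzero columns): column $j$ of $\delta$ contains a $1$, say in row $k$, so $\alpha_k$ is a Boolean sum of rows of $\beta$ in which $\beta_j$ occurs, whence $\beta_j\le\alpha_k$; combined with $V(\beta)\subseteq V(\alpha)$ (which does come from $\beta=\gamma\alpha$) this yields $\langle R(\beta)\rangle\subseteq\langle R(\alpha)\rangle$. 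Note that each containment therefore needs \emph{both} products --- one for the row-space inclusion, the other for the single-row domination --- so the two halves are not obtained ``symmetrically'' from one product each. This is exactly how the paper argues: it first gets $V(\alpha)=V(\beta)$ from Zaretskii's Lemma~\ref{zaretskii} (legitimate, since $\mathcal{L}$-equivalence in $\mathscr{M}_n$ implies $\mathcal{L}$-equivalence in $\mathcal{B}_n$), and then uses column-fullness of the multipermutation multiplier to obtain the single-row domination. With that repair your plan goes through.
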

\begin{proof}
For each $\gamma\in  \mathscr{M}_n$ denote by $\gamma_i$ the $i^{th}$ row of $\gamma$ (when $\gamma$ is in matrix form).

Let $\alpha, \beta\in \mathscr{M}_n$ be such that $\langle R(\alpha)\rangle =\langle R(\beta)\rangle $. 
Since $R(\alpha)\subseteq \langle R(\alpha)\rangle$,
  for all $ i\in[ n]$ there exist  $j_{1}, \ldots, j_{l}\in [ n]$ such that  $\alpha_i= \beta_{j_1}+\cdots+\beta_{j_l}$, we can then define $\rho \in \mathscr{M}_n$ by the rule $\rho_i$ has a $1$ in exactly all places $j_1, \ldots, j_l$.  It is then easy to see that $\rho \beta =\alpha$. In a similar way we can define $\delta\in \mathscr{M}_n$ such that $\delta \alpha =\beta$. It follows that  $\alpha \mathcal{L} \beta$.

Let us assume now that $\alpha, \beta \in \mathscr{M}_n$ are such that $\alpha \mathcal{L} \beta$. By Lemma \ref{zaretskii} we know  that $V(\alpha)=V(\beta)$, so, for all $i\in[n]$,  $\alpha_i\in V(\beta)$ which implies that $\alpha_i=\beta_{j_1}+\cdots+ \beta_{j_l}$ for some $\beta_{j_1},\ldots, \beta_{j_l}\in R(\beta)$, since $\alpha_i\neq 0$. Since there exists $\rho\in \mathscr{M}_n$, so $i$ appears in the image of $\rho$ (i.e. $\rho$ has at least one $1$ in column $i$),  such that $\rho\alpha =\beta$, we have that  $\alpha_i\le \beta_j$ for some $j=1, \ldots, n$. Hence $\alpha_i\in \langle R(\beta)\rangle$, and so $R(\alpha)\subseteq \langle R(\beta)\rangle$. It the follows, by definition of $\langle R(\alpha)\rangle$, that $\langle R(\alpha)\rangle \subseteq \langle R(\beta)\rangle$.  Analougously, w ecan show that $\langle R(\beta)\rangle \subseteq \langle R(\alpha)\rangle$. Thus $\langle R(\alpha)\rangle = \langle R(\beta)\rangle$.

In a similar way, reversing rows and columns, we can show that $\alpha \mathcal{R} \beta$ if and only if  $\langle C(\alpha)\rangle =\langle C(\beta)\rangle $. And, as a consequence of both these facts we have that $\alpha \mathcal{H} \beta$ if and only if  $\langle R(\alpha)\rangle=\langle R(\beta) \rangle$ and  $\langle C(\alpha)\rangle =\langle C(\beta)\rangle$.

\end{proof}






  



\begin{example}
We have 

$\she{1}{1}{1,2,3} \ \ \ \mathcal{L} \ \ \ \ \she{1}{1,2,3}{1,2,3} \ \ \ \  \mathcal{R} \ \ \ \ \ \she{1,2}{1,2,3}{1,2,3}$

\end{example}

\begin{example}

The following multipermutations  are $\mathcal{L}$ related in $\mathcal{B}_n$ but not in $\mathscr{M}_n$

$\she{1}{2,3}{1,2,3}$ \ \ \ and \ \ \ $\she{1}{2,3}{2,3}$

\end{example}

\begin{example}
The following multipermutations are $\mathcal{L}$ related but do not have the same set of rows
$$\shefive{1,3}{1}{3}{2,4}{1,3,5} \ \ \ \mathcal{L} \ \ \ \ \shefive{1}{3}{2,4}{2,4}{1,3,5}$$

\end{example}

\subsection{Regular elements}

An element $a$ of a semigroup $S$ is called {\it regular} if there exists $x\in S$ s.t. $a=axa$, $x$ is called an inverse of $a$. 
Schein \cite{schein1976regular} gave us a way of checking if a binary relation is regular in the semigroup of all binary relations.

\begin{lemma}[\cite{schein1976regular}]
Let $\rho\in \mathcal{B}_n$ be a binary relation. Then 
$\rho$ is regular (in $\mathcal{B}_n$) iff $\rho \subseteq \rho \circ (\rho^{-1}\circ \rho^c\circ \rho^{-1})^c\circ \rho$.
\end{lemma}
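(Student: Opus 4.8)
The plan is to show that the relation $x^\ast := (\rho^{-1}\circ \rho^c \circ \rho^{-1})^c$ is the \emph{largest} binary relation $x$ for which $\rho \circ x \circ \rho \subseteq \rho$, and then to read the regularity criterion off from this maximality together with monotonicity of composition.

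First I would establish the key claim: for a fixed $\rho \in \mathcal{B}_n$, a pair $(b,c)$ may be placed in $x$ without violating the inclusion $\rho \circ x \circ \rho \subseteq \rho$ precisely when there is no ``witness'' to a violation, i.e.\ no pair $(a,d)$ with $(a,b)\in\rho$, $(c,d)\in\rho$ and $(a,d)\notin\rho$. Indeed, $\rho \circ x \circ \rho \subseteq \rho$ fails exactly when some triple $(a,b)\in\rho$, $(b,c)\in x$, $(c,d)\in\rho$ has $(a,d)\notin\rho$, and such a violation is ``caused'' by the single pair $(b,c)\in x$. Unwinding the definitions with the left-to-right composition convention of this section, the set of \emph{forbidden} pairs $(b,c)$ — those for which a witness $(a,d)$ exists — is exactly $\{(b,c): \exists a,d,\ (b,a)\in\rho^{-1},\ (a,d)\in\rho^c,\ (d,c)\in\rho^{-1}\}=\rho^{-1}\circ\rho^c\circ\rho^{-1}$. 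Since the violation condition decomposes over individual pairs of $x$, the complement of this forbidden set, namely $x^\ast$, is the unique maximal $x$ with $\rho\circ x\circ\rho\subseteq\rho$; in particular $\rho\circ x^\ast\circ\rho\subseteq\rho$ always holds, and more generally $\rho\circ x\circ\rho\subseteq\rho$ iff $x\subseteq x^\ast$.

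Given this, the stated equivalence follows by a short formal argument. If $\rho$ is regular, pick $x$ with $\rho=\rho\circ x\circ\rho$; then $\rho\circ x\circ\rho\subseteq\rho$ forces $x\subseteq x^\ast$, so by monotonicity $\rho=\rho\circ x\circ\rho\subseteq\rho\circ x^\ast\circ\rho$, which is the asserted inclusion. Conversely, if $\rho\subseteq\rho\circ x^\ast\circ\rho$, then combining with the always-valid reverse inclusion $\rho\circ x^\ast\circ\rho\subseteq\rho$ yields $\rho=\rho\circ x^\ast\circ\rho$, exhibiting $x^\ast$ as an inverse and so witnessing that $\rho$ is regular.

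The main obstacle — really the only nontrivial step — is the careful bookkeeping in the first paragraph: correctly translating ``a witness to a violation exists'' into the triple composition $\rho^{-1}\circ\rho^c\circ\rho^{-1}$, keeping the orientation of each converse straight under the chosen composition convention. Once the forbidden set is identified and the maximality of $x^\ast$ is in hand, the rest of the lemma is purely formal.
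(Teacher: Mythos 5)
Your proposal cannot be compared against a proof in the paper, because the paper gives none: this lemma is stated as an imported result of Schein \cite{schein1976regular} and used as a black box (the paper only adds, also without proof, Schein's companion fact that $(\rho^{-1}\circ \rho^c\circ \rho^{-1})^c\circ \rho\circ(\rho^{-1}\circ \rho^c\circ \rho^{-1})^c$ is the greatest inverse of $\rho$). Judged on its own, your argument is correct and complete. The structural point you rely on is sound: composition distributes over unions, so $\rho\circ x\circ\rho\subseteq\rho$ holds iff it holds for each singleton $\{(b,c)\}\subseteq x$ separately, and a pair $(b,c)$ violates it iff there exist $a,d$ with $(a,b)\in\rho$, $(c,d)\in\rho$, $(a,d)\in\rho^c$; under the paper's left-to-right convention ($(x,z)\in\rho\circ\tau$ iff $\exists y$ with $(x,y)\in\rho$, $(y,z)\in\tau$) this forbidden set is exactly $\rho^{-1}\circ\rho^c\circ\rho^{-1}$, so $x^\ast=(\rho^{-1}\circ\rho^c\circ\rho^{-1})^c$ is indeed the largest $x$ with $\rho\circ x\circ\rho\subseteq\rho$, and both directions of the equivalence then follow by monotonicity exactly as you write. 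Your bookkeeping in the orientation-sensitive step checks out. One cosmetic remark: calling $x^\ast$ ``an inverse'' of $\rho$ at the end is consistent with this paper's definition (which requires only $\rho=\rho\circ x\circ\rho$); under the more common semigroup definition, which also demands $x=x\circ\rho\circ x$, one would replace $x^\ast$ by $x^\ast\circ\rho\circ x^\ast$ --- this is precisely the ``greatest inverse'' expression the paper quotes from Schein, and the verification is routine from $\rho=\rho\circ x^\ast\circ\rho$.
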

Here $\rho^{-1}$ is the inverse relation, as defined earlier on for multipermutations $\rho^{-1}=\{(y,x) : (x,y)\in \rho\}$, and $\rho^c$ is the complement relation  $\rho^c=\{(x,y)\in X\times X: (x,y)\notin \rho\}$. Since we are here using the word inverse for distinct things we will use just inverse for semigroup inverse and will call inverse relation to $\rho^{-1}$ to avoid confusion.
In the same paper he showed that the relation $$(\rho^{-1}\circ \rho^c \circ \rho^{-1})^c\circ \rho \circ (\rho^{-1}\circ \rho^c \circ \rho^{-1})^c$$
is the greatest (relatively to containment of relations) inverse of $\rho$.

Using Schein's condition we can  check if a multipermutation $\rho$ has an inverse, by checking if this greatest inverse is also a multipermutation, but it is not enough to check the regularity condition presented in the lemma above.

\begin{example} The multipermutation
$\she {1,2}{3}{1,2,3}$ is regular as a binary relation, but not as a multipermutation, since all inverses of it are binary relations that are not multipermutations.

\end{example}

Even though this greatest inverse is computable in polynomial time it is not always simple to check. We adapted an algorithm by Kim \& Roush \cite{kim1978inverses} to compute inverses for multipermutations, in particular em keep the notation used in that article for easier comparison.

Let $V_n$ be the set of all $n$-tuples  of elements of $\{0,1\}$. A subset $W$ of $V_n$ is called a {\it subspace} of $V_n$ if it contains the zero vector and $u+v\in W$ for all $u, v\in W$. The {\it subspace spanned} by $W$ is the smallest subspace that contains $W$, we denote it by $\langle W\rangle$. A vector $v$ is said to be {\it dependent} on $W$ if $v\in \langle W\rangle$. A set $W$ is said to be {\it independent} if for all $v\in W$, $v$ is not dependent on $W\backslash \{v\}$. A subset $S$ is said to be a {\it basis} for a subspace $W$ if $W=\langle S\rangle$ and $S$ is an independent set.

If $v\in V_n$ we denote by $v_i$ the element of $\{0,1\}$ occuring in position $i$ of $v$. For $u, v\in V_n$ we say that $u\leqslant v$ if $u_i=1$ only if $v_i=1$ for all $i=1, \ldots, n$.

\bigskip
{\bf Computing regular multipermutations:}

Let $\alpha$ be a binary square matrix (i.e. a multipermutation). Let   $\alpha_{i^*}$  denote the $i^{th}$ row of $\alpha$.

By the row space of $\alpha$ we mean the subspace spanned by the set of rows of $\alpha$, and denote it by $R(\alpha)$. We denote by $b(\alpha)$ the basis for $R(\alpha)$ and call it the {\it row basis} of $\alpha$.

For each $v \in b(\alpha)$ a vector $u$ with one $1$ is called an {\it identification vector}  of $v$ if and only if:  $u\leqslant w$ holds if and only if   $v\leqslant w$ for $w\in b(\alpha)$. 

Let $I(v)$ denote the set of identification vectores  of a basis vector $v$ of $\alpha$. Finally, set
 $p(t)=\{\inf w\in R(\alpha): t\leq w\}$, where the  infimum  is taken in the lattice $V(\alpha)$.

The algorithm  to find an  inverse multipermutation of  $\alpha$, receives as input $\alpha$, and goes :

\begin{enumerate}
\item find $b(\alpha)$;
\item find $I(v)$ for each $v\in b(\alpha)$; 
\item for each $v\in b(\alpha)$, choose a  specific identification vector $u\in I(v)$;
\item for each such chosen  $u$, choose a vector $s$  s.t. $s_i=1$  if $\alpha_{i^*}\leqslant v$;
\item choose any vector $t$ with exactly one $1$ entry other than the $u$'s chosen in step 3, and  send $t$ to a vector $b$ s.t. $b_i=1$ only if $\alpha_{i^*}\leqslant p(t)$;
\item linearly order the set of vectores with only one 1 in such a way that the mapping $i\mapsto (\delta_{i1},\delta_{i2},  \ldots, \delta_{in})$ is an order isomorphism. Write the vectors $s$ and $b$ in the order of the $u$'s and $t$. \end{enumerate}

The resulting matrix will be an inverse of $\alpha$, that will also be a multipermutation when the conditions presented in Theorem \ref{regular} are satisfied.
We note that this algorithm differs from\cite[Section 5 I]{kim1978inverses} in Step 5 since for the case of multipermutations all columns of the matrix have a  $1$.

\begin{example}

Let $\alpha= \she{2}{2,3}{1}$, following the steps above we get:
\begin{enumerate}
\item $b(\alpha)=\{(0 \ 1 \ 0), (0 \ 1 \ 1), (1 \ 0\ 0)\}$ which we note is equal to $R(\alpha)$;
\item $I((0 \ 1 \ 0))=\{ (0 \ 1 \ 0)\}; I((0 \ 1 \ 1))=\{(0 \ 0\ 1)\}; I((1 \ 0\ 0))=\{(1 \ 0\ 0)\}$;
\item the $u$'s are clearly defined, no choice needs to be made;
\item For $u=(0 \ 1 \ 0)$, $s$ must be $(1 \ 0 \ 0)$; for $u=(0 \ 0\ 1)$ the vector $s$ can be $(1\ 1\ 0) $ or $(0\ 1\ 0)$; for $u=(1 \ 0\ 0)$, $s$ must be $(0\ 0 \ 1)$;
\item no choice for $t$;
\item there are two inverses of $\alpha$ in $\mathscr{M}_n$, they are
\end{enumerate}
 \[
\begin{bmatrix}
    0&0&1\\
    1&0&0\\
    1&1&0
\end{bmatrix}
{\rm and}
\begin{bmatrix}
    0&0&1\\
    1&0&0\\
    0&1&0
\end{bmatrix} \].

\end{example}

\begin{lemma}
Let $\alpha\in \mathscr{M}_n$ be arbitrary.
If $b(\alpha)\neq R(\alpha)$ then $\alpha$ has no inverse in $\mathscr{M}_n$.
\end{lemma}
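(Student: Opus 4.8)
The plan is to argue by contraposition: assuming $\alpha$ has an inverse $x\in\mathscr{M}_n$, so that $\alpha x\alpha=\alpha$, I would show that every row of $\alpha$ is join-irreducible in the row space, i.e. that $b(\alpha)$ is the whole set of rows. First I would fix the reading of the hypothesis. Since every join-irreducible element of $R(\alpha)$ is itself a row (if a join-irreducible $v$ equals a boolean sum of rows, it equals one of them), the basis $b(\alpha)$ is always contained in the set of rows, and $b(\alpha)\neq R(\alpha)$ says exactly that some row $\alpha_{c^*}$ is \emph{redundant}: $\alpha_{c^*}=\bigvee_t v_t$ for join-irreducible rows $v_t\lneq\alpha_{c^*}$. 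Note that then the support (set of $1$-positions) of $\alpha_{c^*}$ is the union of the supports of the $v_t$.

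Next I would read $\alpha x\alpha=\alpha$ at the level of rows. Writing $\beta:=\alpha x$, the identity $\beta\alpha=\alpha$ gives, for each $i$,
$$\alpha_{i^*}=\bigvee_{l\in S_i}\alpha_{l^*},\qquad S_i:=\{\,l:\beta_{il}=1\,\},$$
and in particular $\alpha_{l^*}\leq\alpha_{i^*}$ for all $l\in S_i$. At the same time $\beta_{i^*}=\bigvee_{j\in\mathrm{supp}(\alpha_{i^*})}x_{j^*}$. Combining these, I would extract a constraint on which columns the rows of $x$ may occupy: if $x_{j_0c}=1$, then for every row $i$ through $j_0$ (i.e. $\alpha_{ij_0}=1$) we get $\beta_{ic}\geq\alpha_{ij_0}x_{j_0c}=1$, hence $c\in S_i$ and therefore $\alpha_{c^*}\leq\alpha_{i^*}$. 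Consequently $\alpha_{c^*}\leq m(j_0)$, where $m(j_0):=\bigwedge\{\alpha_{i^*}:j_0\in\mathrm{supp}(\alpha_{i^*})\}$ is the meet of all rows meeting column $j_0$.

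Now I would invoke co-totality of $x$: every column $c$ must be hit by some row of $x$, so for every $c$ there must exist a column index $j_0$ with $\alpha_{c^*}\leq m(j_0)$. Applying this to the redundant row $c$, take any $j_0\in\mathrm{supp}(\alpha_{c^*})$; that column lies in the support of one of the strictly smaller join-irreducibles $v_t\lneq\alpha_{c^*}$, and since $v_t$ is itself a row through $j_0$ we get $m(j_0)\leq v_t\lneq\alpha_{c^*}$, so $\alpha_{c^*}\not\leq m(j_0)$. Thus no column inside $\mathrm{supp}(\alpha_{c^*})$ can witness the coverage of column $c$, and I would aim to conclude that column $c$ of $x$ is forced empty, contradicting $x\in\mathscr{M}_n$.

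The main obstacle is exactly the columns $j_0\notin\mathrm{supp}(\alpha_{c^*})$: the argument above only disposes of columns inside the support, whereas a priori a column sitting only in rows that dominate $\alpha_{c^*}$ could still satisfy $\alpha_{c^*}\leq m(j_0)$ and supply the missing entry. To close the proof one must rule out such a dominating column, i.e. show that a redundant row is never bounded above by some $m(j_0)$. I expect this to be the delicate heart of the statement: it does not seem to follow from the reconstruction identities alone, and I would try to secure it either by choosing the redundant row to be \emph{maximal} in the row order (so that no row strictly above it can contribute fresh columns) or by appealing to the finer description of the greatest inverse à la Kim--Roush. This is where I would concentrate the effort, and care is needed, since the forcing of an empty column is clean only once the external columns are controlled.
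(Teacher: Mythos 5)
Your preparatory steps are correct: from $\alpha x\alpha=\alpha$ one does get the necessary condition $x_{j_0c}=1\Rightarrow\alpha_{c^*}\le m(j_0)$, and your elimination of every column $j_0\in\mathrm{supp}(\alpha_{c^*})$ is sound. But the obstacle you flag at the end is not a removable technicality — it is fatal to this strategy. The claim you would need, that a redundant row is never below $m(j_0)$ for any column $j_0$, is false. Take $n=4$ with rows $1100$, $0110$, $1110$, $1111$: the redundant row $1110$ lies below $m(4)=1111$, since column $4$ meets only the last row. Your first repair idea also fails on this example: the unique redundant row $1110$ is not maximal among the rows, so one cannot arrange for the redundant row to be maximal. (Here the conclusion of the lemma does still hold, but for a reason your scheme cannot detect: no row of $\alpha$ lies below $m(2)=0100$, so \emph{row} $2$ of any candidate inverse is empty. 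The obstruction, when it exists, can be an empty row elsewhere rather than the empty column $c$ you aim for.)

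Worse, the gap cannot be closed by any argument, because the statement itself fails under the reading that the paper's own proof uses (``some row is a sum of other rows''). Let
\[
\alpha=\begin{pmatrix}
1&1&0&0&0&0\\
0&0&1&1&0&0\\
1&1&1&1&0&0\\
0&0&1&1&0&0\\
1&1&1&1&1&0\\
0&0&0&0&0&1
\end{pmatrix},
\qquad
x=\begin{pmatrix}
1&0&0&0&0&0\\
1&0&0&0&0&0\\
0&1&0&1&0&0\\
0&0&0&1&0&0\\
0&0&1&0&1&0\\
0&0&0&0&0&1
\end{pmatrix}.
\]
Both are multipermutations, and row $3$ of $\alpha$ is the join of rows $1$ and $2$, so $b(\alpha)\neq R(\alpha)$. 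Yet the rows of $\alpha x$ are $100000$, $010100$, $110100$, $010100$, $111110$, $000001$, and multiplying again by $\alpha$ returns exactly the rows of $\alpha$; hence $\alpha x\alpha=\alpha$, so $\alpha$ is regular in $\mathscr{M}_6$ and $x\alpha x\in\mathscr{M}_6$ is an inverse of $\alpha$. Note that the entry $x_{5,3}=1$ sits precisely in one of your ``dominating'' columns: $m(5)=111110\ge\alpha_{3^*}=111100$. You should also know that the paper's own proof founders on exactly the same point: it argues via the Kim--Roush algorithm that no $s$-vector can have a $1$ in the redundant coordinate because $\alpha_1\neq v$ for every $v\in b(\alpha)$, but the algorithm only requires $\alpha_{i^*}\le v$ (not $\alpha_{i^*}=v$) for $s_i=1$ — the paper's own $3\times3$ example admits $s=(1\,1\,0)$ for $v=(0\,1\,1)$ even though row $1=(0\,1\,0)\neq v$. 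A redundant row lying strictly below a basis vector is exactly your dominating-column scenario, and in the example above it genuinely produces a multipermutation inverse. So what you have isolated is not a gap in your write-up so much as a hole in the lemma itself (and hence in the forward direction of Theorem~\ref{regular}, which cites it); as stated, the lemma needs a stronger hypothesis or a different proof of a corrected claim.
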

\begin{proof}
Suppose that $\alpha_1\neq b(\alpha)$, so that $\alpha_1$ can be written as the sum of other rows of  $\alpha$. We show that all inverses of $\alpha$ in $\mathcal{B}_n$ will have only zeros in the first column.
Following Kim and Roush algorithm, we know that $\alpha_1\neq v$ for all $v\in b(\alpha)$, so no vector $s$, produced by the algorithm, will have a $1$ in the first component. Since $t$ contains a unique $1$ we know that $p(t)\neq \alpha_1$, it follows that $b$ will not contain $1$ in its first component. 
\end{proof}

\begin{lemma}
Let $\alpha\in \mathscr{M}_n$ be arbitrary.
If $p(t)=0$, for any possible $t$,  then $\alpha$ has no inverse in $\mathscr{M}_n$.
\end{lemma}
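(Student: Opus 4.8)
The plan is to mirror the proof of the preceding lemma: I will show that the Kim--Roush procedure forces a zero row into \emph{every} inverse of $\alpha$ in $\mathcal{B}_n$, so that no inverse can be a multipermutation. Write $e_j$ for the one-$1$ vector playing the role of the ``possible'' $t$ in the hypothesis, so that $p(e_j)=0$.

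The first, and main, step is to rule out that $e_j$ could ever be chosen as an identification vector; equivalently, to guarantee that $e_j$ is always a genuine $t$ in Step $5$ of the algorithm. I would prove the contrapositive: if $e_j \in I(v_0)$ for some basis vector $v_0 \in b(\alpha)$, then $p(e_j) = v_0 \neq 0$. Indeed, taking $w = v_0$ in the defining condition $e_j \leqslant w \Leftrightarrow v_0 \leqslant w$ shows that the $j$-th coordinate of $v_0$ is $1$, so $v_0$ lies in $S := \{w \in R(\alpha) : e_j \leqslant w\}$. Conversely, any $u \in S$ can be written as a Boolean sum $u = \sum_{v \in T} v$ of basis vectors; since the $j$-th coordinate of $u$ is $1$, some $v \in T$ has $j$-th coordinate $1$, and the identification condition (now applied with $w = v$) gives $v_0 \leqslant v \leqslant u$. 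Hence $v_0$ is the least element of $S$, so $p(e_j) = \inf S = v_0 \neq 0$, the desired contradiction. Consequently $p(e_j) = 0$ entails $e_j \notin I(v)$ for every $v \in b(\alpha)$, so in every run of the algorithm $e_j$ is a free vector $t$.

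The second step is short. For this $t = e_j$ the algorithm assigns the row $b$ with $b_i = 1$ iff $\alpha_{i^*} \leqslant p(e_j) = 0$, that is, iff $\alpha_{i^*}$ is the zero vector. But $\alpha \in \mathscr{M}_n$ is total, so it has no zero row; hence $b = 0$. Thus the row of the constructed inverse indexed by $j$ is zero, and since a multipermutation has no zero row, this inverse is not a multipermutation. As the value of $b$ is forced (it is independent of the choices made in Steps $3$ and $4$), the same conclusion holds for every inverse the algorithm produces.

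Finally, invoking that the procedure of Kim and Roush \cite{kim1978inverses} yields \emph{all} inverses of $\alpha$ in $\mathcal{B}_n$ (the fact already used in the previous lemma), every inverse of $\alpha$ has zero $j$-th row, so none lies in $\mathscr{M}_n$; equivalently, $\alpha$ has no inverse in $\mathscr{M}_n$. The principal obstacle is precisely this universal quantification over inverses: a naive reading of the algorithm produces only one inverse with a zero row, and it is the identification-vector claim of the first step that upgrades this to \emph{all} inverses, by guaranteeing that $e_j$ can never escape being a $t$.
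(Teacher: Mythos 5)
Your proof is correct and takes the same route as the paper's: the hypothesis $p(t)=0$ forces the row $b$ assigned to that $t$ in the Kim--Roush procedure to be zero (since $\alpha$, being a multipermutation, has no zero row), so every inverse of $\alpha$ in $\mathcal{B}_n$ has a zero row and hence none lies in $\mathscr{M}_n$. Your first step --- showing that a unit vector $e_j$ with $p(e_j)=0$ can never be an identification vector, and therefore is a genuine $t$ in every run of the algorithm --- makes explicit a point that the paper's one-line proof leaves implicit, so it is a welcome tightening rather than a different approach.
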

\begin{proof}
If this is the case, then in  Kim and Roush algorithm $b=0$, since no row of $\alpha$ is the zero row, so all inverses of $\alpha$ will have a zero row, thus they will not be multipermutations.
\end{proof}

\begin{theorem}\label{regular}
A multipermutation $\alpha$ has an inverse, in $\mathscr{M}_n$,  iff $b(\alpha)=R(\alpha)$, $I(v)\neq \emptyset$ for each $v\in b(\alpha)$, and $p(t)\neq 0$ (for some $t$).
\end{theorem}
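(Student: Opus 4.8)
The plan is to prove the two implications separately, showing each of the three conditions necessary and then showing them jointly sufficient, and to lean throughout on the fact (due to Kim and Roush, recalled above) that the displayed algorithm, whenever it can be run to completion, returns a genuine semigroup inverse of $\alpha$ in $\mathcal{B}_n$; with that in hand the only remaining question is whether the output lands in $\mathscr{M}_n$, i.e. whether it has a $1$ in every row and in every column.

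For necessity, suppose $\alpha$ has an inverse $x\in\mathscr{M}_n$, so $\alpha x\alpha=\alpha$. The conditions $b(\alpha)=R(\alpha)$ and $p(t)\neq 0$ for some $t$ are exactly the contrapositives of the two preceding lemmas, so they are immediate. The substantive point is $I(v)\neq\emptyset$ for each $v\in b(\alpha)$, and here I would argue directly from $\alpha x\alpha=\alpha$, without even using that $x$ is a multipermutation. Recall that the basis vectors are precisely the join-irreducible elements of the row space, and each such vector is itself a row (since the row space is join-generated by the rows); write $v=\alpha_{i^*}$. Reading off $\alpha_{i^*}=(\alpha x\alpha)_{i^*}=\bigvee_{j:(\alpha x)_{ij}=1}\alpha_{j^*}$ and using join-irreducibility of $v$, there is a $j_0$ with $\alpha_{j_0^*}=v$ and $(\alpha x)_{ij_0}=1$; the latter yields a coordinate $k$ with $v_k=1$ and $x_{kj_0}=1$. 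I claim $e_k$ is an identification vector of $v$: $e_k\leqslant v$ gives one direction of the defining biconditional, and for the other, any basis vector $w=\alpha_{m^*}$ with $w_k=1$ satisfies $(\alpha x)_{mj_0}=1$ (because $\alpha_{mk}x_{kj_0}=1$), whence $v=\alpha_{j_0^*}\leqslant(\alpha x\alpha)_{m^*}=w$. Thus $I(v)\neq\emptyset$.

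For sufficiency, assume all three conditions. Since $I(v)\neq\emptyset$ for every basis vector, Step 3 can be carried out, so the algorithm runs to completion and returns an inverse $x$ of $\alpha$ in $\mathcal{B}_n$; it remains to verify $x\in\mathscr{M}_n$. First, distinct basis vectors have disjoint identification-vector sets (if $e_k$ identified both $v$ and $v'$ then $v\leqslant v'$ and $v'\leqslant v$), so the chosen $u$'s occupy distinct rows and $x$ is well defined. No row of $x$ is zero: each $s$-row attached to a basis vector $v$ carries a $1$ in the position of a row equal to $v$ (as $v$ is a row), and each $b$-row attached to a $t$ carries a $1$ in the position of any row below $p(t)$, of which there is at least one precisely because $p(t)\neq 0$ is a nonzero element of $R(\alpha)$ and hence dominates a row. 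No column of $x$ is zero: since $b(\alpha)=R(\alpha)$, every row $\alpha_{j^*}$ is itself a basis vector $v_j$, and the $s$-row produced from the chosen $u\in I(v_j)$ has its $j$-th entry equal to $1$ because $\alpha_{j^*}\leqslant v_j$. Hence $x$ has a $1$ in every row and every column, so $x\in\mathscr{M}_n$ and $\alpha$ is regular in $\mathscr{M}_n$.

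The step I expect to be the main obstacle is the row/column bookkeeping in the sufficiency direction, together with pinning down the exact form of the $p(t)$ hypothesis. The second lemma only supplies ``$p(t)\neq 0$ for some $t$'' as a \emph{necessary} condition, whereas ruling out a zero $b$-row genuinely requires $p(t)\neq 0$ for each coordinate $t$ that is not selected as an identification vector. I would reconcile the two readings by analysing, under $b(\alpha)=R(\alpha)$ and $I(v)\neq\emptyset$ for all $v$, exactly which coordinates remain to be mapped as $t$'s in Step 5 and verifying that for each of them $p(t)\neq 0$, so that the per-coordinate requirement is automatically met for the matrices at hand; establishing this equivalence carefully is the delicate part of the argument.
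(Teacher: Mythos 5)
Your overall architecture is the same as the paper's: necessity of $b(\alpha)=R(\alpha)$ and of the $p(t)$ condition via (the contrapositives of) the two preceding lemmas, necessity of $I(v)\neq\emptyset$ from regularity in $\mathcal{B}_n$, and sufficiency by running the Kim--Roush algorithm and checking that a suitable output has no zero row (via the $b$-vectors) and no zero column (via the $s$-vectors, using $b(\alpha)=R(\alpha)$). One genuine difference is welcome: where the paper simply cites Lemma 1 of Kim--Roush for the necessity of $I(v)\neq\emptyset$, you give a self-contained join-irreducibility argument, and it is correct (it even shows the condition is forced by regularity in $\mathcal{B}_n$ alone).

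The gap is exactly the step you flag and defer at the end. You propose to reconcile the stated hypothesis ``$p(t)\neq 0$ for \emph{some} $t$'' with the per-coordinate requirement by proving that, under $b(\alpha)=R(\alpha)$ and $I(v)\neq\emptyset$ for all $v$, every coordinate left over after Step 3 automatically has $p(t)\neq 0$. That implication is false, so this step cannot be carried out. Counterexample with $n=6$: writing $e_j$ for the vector with a single $1$ in position $j$, let $\alpha$ have rows $v_1,v_2,v_3,v_1,v_2,v_3$ where $v_1=(1,1,0,0,0,1)$, $v_2=(0,1,1,0,0,1)$, $v_3=(0,0,0,1,1,0)$. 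This is a multipermutation; the distinct rows are independent, so $b(\alpha)=R(\alpha)$; $I(v_1)=\{e_1\}$, $I(v_2)=\{e_3\}$, $I(v_3)=\{e_4,e_5\}$ are all nonempty; and choosing $u=e_4$ for $v_3$ leaves $e_5$ as a possible $t$ with $p(e_5)=v_3\neq 0$. Yet coordinate $2$ is left over in every run of the algorithm and $p(e_2)=0$, since no nonzero element of the row space lies below the Boolean meet $(0,1,0,0,0,1)$ of $v_1$ and $v_2$. Indeed $\alpha$ is not regular in $\mathscr{M}_6$: if $\alpha y\alpha=\alpha$ then, because the three distinct rows are pairwise incomparable, comparing first rows forces the support of $(\alpha y)_{1^*}$ to lie in $\{1,4\}$ and comparing second rows forces the support of $(\alpha y)_{2^*}$ to lie in $\{2,5\}$; since $y_{2^*}$ is below both of these, $y_{2^*}=0$, so every inverse of $\alpha$ in $\mathcal{B}_6$ has a zero row. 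Thus all three hypotheses, as literally stated, hold while the conclusion fails. The resolution is not to derive the universal statement from the existential one but to read (and prove) the condition universally: $p(t)\neq 0$ for \emph{every} possible $t$, equivalently for every coordinate outside $\bigcup_v I(v)$ (coordinates inside are harmless, since $e_j\in I(v)$ forces $p(e_j)=v\neq 0$). Under that reading your sufficiency argument goes through verbatim. You should also know the paper is sloppy at precisely the same point: its proof asserts ``since $p(t)\neq 0$ for some $t$ we know that $\alpha$ will have an inverse with no zero row,'' which is unjustified under the existential reading; your instinct that this is the delicate point was right, but the fix is to correct the quantifier in the statement, not to bridge the two readings.
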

\begin{proof}
$\Rightarrow$ Follows from \cite[Lemma 1]{kim1978inverses}  and the two previous lemmas.

$\Leftarrow$ Assume that $b(\alpha)=R(\alpha)$, $I(v)\neq \emptyset$ for each $v\in r(\alpha)$, and $p(t)\neq 0$ (for some $t$). By \cite[Lemma 1]{kim1978inverses} we know that $\alpha$ has an inverse in $\mathcal{B}_n$. Since $p(t)\neq 0$ for some $t$ we know that $\alpha$ will have an inverse with no zero row. We now just need to show that one of the inverses with no zero row also has  no zero column.

We know, by assumption, that $\alpha_1^*$ (the first row of $\alpha$) belongs to $b(\alpha)$, hence, when we are at Step 3 of the algorithm and $v=\alpha_1^*$ we will choose $s$ such that $s_1=1$. Thus the first column of the inverse of $\alpha$ will be non zero. Since all rows of $\alpha$ belong to $b(\alpha)$ it follows that we can choose an inverse with non zero columns. Thus $\alpha$ has an inverse in $\mathscr{M}_n$.
\end{proof}

\begin{example}
$\she{1,2,3}{2,3}{1}$
has inverses in $\mathcal{B}_n$ but not in $\mathscr{M}_n$. From Kim and Roush algorithm we can check that all its inverses are
$$\she{3}{2}{}, \ \ \ \she{3}{2}{2}, \ \ {\rm and} \ \ \she{3}{}{2},$$
none being a multipermutation. this follows from the fact that its row basis does not include all rows.
\end{example}

\subsection{Generators}

It is known that, unlike $S_n$ that is generated by two elements,  $\mathcal{B}_n$ does not admit a polynomial (on $n$) generating set. This was described by Devadze \cite{devadze1968generating} and more recently proved by 
Konieczny \cite{konieczny2011proof}.

In this section we show that $\mathscr{M}_n$ also does not admit a polynomial generating set, and does indeed need more elements to be generated than $\mathcal{B}_n$.

Using Devadze's set of generators, and the proof provided by Konieczny, we show that any set of generators of $\mathscr{M}_n$ must include the two permutations that generate $S_n$ and a set of representatives of the prime $\mathcal{D}$-classes of $M_n$.
To obtain a generating set we add a few more multipermutations to the set mentioned above.

Let $\alpha, \beta, \gamma \in \mathcal{B}_n$, the monoid of binary relations. We say that $\alpha$ is {\it prime} if  it is not a permutation and if $\alpha=\beta \gamma$ implies that either $\beta$ or $\gamma$ are a permutation.

De Caen and Gregory\cite{de1981primes} showed that   if $\alpha \in \mathcal{B}_n$ is prime then no column of $\alpha$ can contain another, and no row of $\alpha$ can contain another row. In particular if $\alpha \in \mathcal{B}_n$ is prime then $\alpha$ has no zero row or column and no row or column with all entries  equal to $1$.
This means that all prime elements of $\mathcal{B}_n$ are multipermutations.
In the same paper they also show that prime multipermutations are not regular, and if a $\mathcal{D}$-class of $\mathcal{B}_n$ contains a prime relation then all relations in that class are prime. We will call these classes {\it prime $\mathcal{D}$-classes}, and they are $\mathcal{D}$-classes of $\mathscr{M}_n$ that are located just below the group of units $S_n$ in the partial order of $\mathcal{D}$-classes of $\mathscr{M}_n$.  This can also be found in  \cite{konieczny2011proof} without mentioning multipermutations.

 We are now trying to build a generating set for $\mathscr{M}_n$, and it follows from Konieczny's result, adapted to multipermutations,  that any set of generators must contain a set of generators of $S_n$ and a set of representatives of the prime $\mathcal{D}$-classes of $\mathscr{M}_n$. The following is the equivalent of \cite[Lemma 4.2]{konieczny2011proof}.

 \begin{lemma}
  Let $D$ be a prime $\mathcal{D}$-class of $\mathscr{M}_n$ and let $T$ be a set of generators of $\mathscr{M}_n$.
Then $D\cap T\neq \emptyset$.
  \end{lemma}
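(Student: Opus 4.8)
The plan is to pull the statement back to the structure of the partial order on the $\mathcal{D}$-classes of $\mathscr{M}_n$. Fix an arbitrary element $\alpha \in D$; since $\alpha$ is prime it is not a permutation. Because $T$ generates $\mathscr{M}_n$, I may write $\alpha = t_1 t_2 \cdots t_k$ with each $t_i \in T$, and the goal becomes to show that at least one of these factors already lies in $D$.

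The argument would rest on the following observations. First, $\mathscr{M}_n$ is finite, so $\mathcal{D} = \mathcal{J}$ and I can work with the $\mathcal{J}$-order throughout. Second, from the factorisation $\alpha = (t_1 \cdots t_{i-1})\, t_i\, (t_{i+1} \cdots t_k)$ one reads off $\alpha \leq_{\mathcal{J}} t_i$ for every $i$, that is, each factor satisfies $t_i \geq_{\mathcal{D}} \alpha$. Third --- and this is where the hypothesis that $D$ sits just below the group of units is used --- the only $\mathcal{D}$-classes lying $\mathcal{D}$-above $D$ are $D$ itself and the top class $S_n$: a class strictly above $D$ must equal $S_n$, since there is nothing strictly between $D$ and the top, and $S_n$ is the maximum of the $\mathcal{D}$-order as the group of units of the finite monoid $\mathscr{M}_n$. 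Hence every factor $t_i$ lies in $D$ or in $S_n$. Finally, since the units are closed under composition, if all the $t_i$ were permutations then $\alpha$ would itself be a permutation, contradicting primeness. So some $t_i$ is a non-unit, and being $\mathcal{D}$-above $\alpha$ it must lie in $D$; this $t_i$ witnesses $D \cap T \neq \emptyset$.

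I expect the only delicate point to be the third observation: carefully justifying that the up-set of $D$ in the $\mathcal{D}$-order is exactly $\{D, S_n\}$. This is essentially a restatement of the phrase ``just below the group of units'' together with the fact that $S_n$ is the top $\mathcal{D}$-class, so it is really bookkeeping about the finite ideal structure rather than a substantial difficulty, and it mirrors \cite[Lemma 4.2]{konieczny2011proof}. In particular I would not need the non-regularity of prime elements here; the only properties of prime elements used are that they are non-permutations and that products of units remain units.
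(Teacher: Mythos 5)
Your proof is correct, but it takes a genuinely different route from the paper's. The paper argues by minimal counterexample, entirely from the definition of prime: assuming $D\cap T=\emptyset$, it takes $\alpha\in D$ with a factorisation $\alpha=t_1\cdots t_m$ over $T$ of minimal length; minimality (plus the fact that translating by a unit does not change the $\mathcal{D}$-class) forces $t_1,t_m\notin S_n$; and since $t_m$ has some $|t_m(i)|\ge 2$ while the prefix $t_2\cdots t_{m-1}$ is surjective, the tail $t_2\cdots t_m$ is not a permutation either, so $\alpha=t_1(t_2\cdots t_m)$ contradicts primeness. You instead work in the $\mathcal{J}$-order and lean on the assertion that prime $\mathcal{D}$-classes are covered by $S_n$ --- a fact the paper states with a citation to \cite{konieczny2011proof} but does not itself prove. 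Granted that fact, your steps ($\mathcal{D}=\mathcal{J}$ by finiteness, each $t_i$ lying $\mathcal{J}$-above $\alpha$, units closed under composition) are sound, and you rightly flag the covering claim as the one load-bearing point. The trade-off: your argument is shorter, more conceptual, and really a general statement about finite monoids (a generating set must meet every $\mathcal{D}$-class of non-units whose only strict $\mathcal{J}$-upper bound is the group of units), but all the multipermutation-specific content is outsourced to the covering fact; the paper's proof is self-contained, and proving the covering fact from scratch would essentially reproduce it (from $\alpha=\gamma\beta\delta$ with $\alpha$ prime, one of $\gamma$, $\beta\delta$ must be a permutation, and in $\mathscr{M}_n$ a product of multipermutations is a permutation only if all factors are --- the same surjectivity argument the paper applies to $t_2\cdots t_m$). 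So the two proofs carry the same mathematical content distributed differently: yours is the cleaner presentation in a context where the ideal structure of $\mathscr{M}_n$ may be assumed known, the paper's is the right one when it may not.
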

 \begin{proof}
  Assume, for a contradiction,  that $D \cap T =\emptyset$. Let $$m=\min \{k :\alpha=t_1\cdots t_k \ {\rm  for \ some}\ \alpha \in D \ {\rm and} \ t_1,\ldots,t_k\in T\}.$$
   Choose some  $\alpha \in D$ such that $\alpha=t_1 \cdots t_k$ for some   $t_1,\ldots ,t_m\in T$. 
 Since $\alpha \notin T$ we
have $m\ge 2$. Note that $t_1 \notin S_n$ since otherwise $t_1^{-1}\alpha = t_2\cdots t_m \in D$, which would 
contradict the minimality of $m$. Similarly, $t_m \notin S_n$. Since $t_m$ is a multipermutation we must have that  $|t_m(i)|\ge2$ (or when in matrix form, there is a row of $t_m$ with at least two $1$s) for some $i=1,\ldots, n$, it follows that $|t_2\cdots t_m (j)|\ge 2$ for some $j=1,\ldots, m$ (note that we apply the relations left to right). Hence $ t_2 \cdots t_m \notin S_n$, and since 
 $t_1 \notin S_n$, and $\alpha = t_1(t_2 \cdots t_m)$, which is a contradiction since $\alpha$ is prime. Thus $D \cap  T\neq \emptyset$.
 
 \end{proof}
 
 The number of prime $\mathcal{D}$-classes  grows faster than a polynomial on $n$, so we won't be able to find a minimal generating for $\mathscr{M}_n$ that is polynomial.
 A minimal generating set for it will contain a set of representatives of the prime $\mathcal{D}$-classes,  the two permutations that generate $S_n$, the multipermutation (in matrix form)
$$\pi=\begin{bmatrix}
    1&0&0&0&\cdots &0 & 0\\
    1&1&0&0&\cdots &0 & 0\\
    0&0&1& 0& \cdots&0 &0\\
    
    \ldots\\
    0&0&0&0&\cdots &0& 1
\end{bmatrix}$$
and a few more multipermutations. This will be the subject of future work, and we leave here a few examples that were tested using GAP \cite{GAP4, Mitchell2020aa}:

\begin{example}
The only prime element in $\mathscr{M}_3$ (up to equivalence) is $\she{1,2}{2,3}{1,3}$ \cite[Example 2.3]{de1981primes}. A generating set for $\mathscr{M}_3$ is the given by 
the permutations $(1 \ 2), (1\ 2\ 3)$, the prime multipermutation,  the multipermutation $\she{1}{1,2}{3}$ (called $\pi$ above) and $\she{1}{1}{2,3}$.  

\end{example}

\begin{example}
The prime elements in $\mathscr{M}_4$ (up to equivalence) are \cite[Example 2.5]{de1981primes}
$$\begin{bmatrix}
    0&1&1&1\\
    1&1&0&0\\
    1&0&1& 0\\
    1&0&0&1
\end{bmatrix}\ \ \ 
and 
\ \ \ \begin{bmatrix}
    1&0&0&1\\
    1&1&0&0\\
    0&1&1& 0\\
    0&0&1&1
\end{bmatrix}$$
and they belong to different $\mathcal{D}$-classes.
A generating set for $\mathscr{M}_4$ is given by the  $(1 \ 2), (1\ 2\ 3\ 4)$, the prime multipermutations above, the  multipermutation $\shefour{1}{1,2}{3}{4}$ (called $\pi$ above), together with the multipermutations $\shefour{1}{2}{2}{3,4}, \shefour{1,2}{1,3}{2,3}{4}$.
\end{example}

\subsection{Blurred permutations}

A multipermutation, or more generally binary relation, $\rho$, is called {\it difunctional}  if it satisfies $\rho \circ \rho^{-1}\circ \rho \subseteq \rho$.
Schein \cite{schein1987multigroups}  showed that every inverse semigroup is isomorphic to an appropriate inverse semigroup of full difunctional binary relations (here the operation is not usual composition since the composition of two difunctional binary relations is not necessarily difunctional). In this subsection we relate blurred permutations with difunctional relations.

\begin{lemma}
Every blurred permutation is  difunctional. 
\end{lemma}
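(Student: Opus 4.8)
The plan is to reduce the problem to the block structure of the defining partition and then carry out a short, direct computation. First I would fix notation: let $f$ be the blurred permutation built as in $(*)$ from a permutation $g$ on $[m]$ with partition $P_1,\ldots,P_m$, and for each $x \in [n]$ write $b(x)$ for the index of the block containing $x$ (so $x \in P_{b(x)}$). The definition $(*)$ then says precisely that $(x,y) \in f$ if and only if $b(y) = g(b(x))$; in other words, membership in $f$ depends only on the blocks of its two arguments, and the induced map on block indices is exactly $g$.

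Next I would record the inverse in the same language. Using the observation already made in the paper that $f^{-1}$ is the blurred permutation obtained from $g^{-1}$ with the \emph{same} partition, we obtain $(x,y) \in f^{-1}$ if and only if $b(y) = g^{-1}(b(x))$. With these two block-level descriptions in hand, I would compute the composite $f \circ f^{-1} \circ f$ using the relational composition of $\mathscr{M}_n$. Computing $f \circ f^{-1}$ first: $(x,z) \in f \circ f^{-1}$ holds iff there is $y$ with $b(y) = g(b(x))$ and $b(y) = g(b(z))$; since the blocks are nonempty and $g$ is a bijection, this is equivalent to $b(x) = b(z)$. Thus $f \circ f^{-1}$ is exactly the equivalence relation ``lies in the same block'', which is a reassuring consistency check against Lemma~\ref{symref} (it is reflexive and symmetric).

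Finally, composing once more with $f$: $(x,w) \in f \circ f^{-1} \circ f$ iff there is $z$ with $b(z) = b(x)$ and $b(w) = g(b(z))$, and taking $z = x$ shows this is equivalent to $b(w) = g(b(x))$, i.e. to $(x,w) \in f$. Hence $f \circ f^{-1} \circ f = f$, which in particular yields the required inclusion $f \circ f^{-1} \circ f \subseteq f$, so $f$ is difunctional. I do not expect a genuine obstacle here: the argument is essentially bookkeeping, and the only point needing care is to keep the composition convention (left-to-right in $\mathscr{M}_n$) consistent and to note that the ``exists'' quantifiers are always witnessed because the blocks are nonempty. It is worth remarking that we actually obtain the equality $f \circ f^{-1} \circ f = f$, a statement slightly stronger than difunctionality.
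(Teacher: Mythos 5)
Your proof is correct and follows essentially the same route as the paper's: both use the fact that $f^{-1}$ is the blurred permutation built from $g^{-1}$ with the same partition, and both verify the stronger equality $f \circ f^{-1} \circ f = f$ by a direct block-level computation. Your version merely phrases the bookkeeping pair-by-pair via the block-index map $b(\cdot)$, where the paper computes with whole blocks $f(P_i)=P_{g(i)}$ at once.
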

\begin{proof}
Let $\rho$ be a blurred permutation obtained from the permutation $g$ with partition $P_1, \ldots, P_m$, then $\rho^{-1}$ is obtained from $g^{-1}$ with partition $P_{g(1)}, \ldots, P_{g(m)}$. For any $i=1,\ldots,m$ and any $x\in P_i$ we have $f(x)=f(P_i)=P_{g(i)}$, and  $f\circ f^{-1}\circ f(x)=f\circ f^{-1} (P_{g(i)})=f(P_i)=P_{g(i)}$. Thus $f\circ f^{-1}\circ f=f$, and $f$ is difunctional.
\end{proof}

It is also clear from this  proof that for blurred permutations are all regular and the inverse multipermutation is also an inverse (in the sense of regular element). We also note that in general we compose relations from left to right and the notation above seemed to compose them right to left, we just used this notation for easiness since in this case reading the composition left to right or right to left made no difference.

\begin{theorem}\label{difunctional}
Blurred permutations are the difunctional multipermutations $\rho$ that satisfy $\rho\circ \rho^{-1}=\rho^{-1}\circ \rho$. Hence they can be defined exactly by the rules $\rho\circ \rho^{-1}\circ \rho=\rho$ and $\rho\circ \rho^{-1}=\rho^{-1}\circ \rho$, or equivalently are full total binary relations on $X$ of the form $A_1\times B_1 \cup \cdots \cup A_k\times B_k$, with $\{A_1, \ldots A_k\}= \{B_1,\ldots, B_k\}$ partitions of $X$.

\end{theorem}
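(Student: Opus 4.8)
The plan is to route all three descriptions through the combinatorial normal form $\rho = A_1\times B_1\cup\cdots\cup A_k\times B_k$ with $\{A_1,\ldots,A_k\}=\{B_1,\ldots,B_k\}$, and to prove equivalence against it; throughout I read $\rho\circ\sigma$ as applying $\sigma$ first, as in the paper. First I would dispense with the equivalence between ``blurred permutation'' and the normal form, which is essentially a matter of unpacking $(*)$. A blurred permutation built from the partition $P_1,\ldots,P_m$ and the permutation $g$ is, as a relation, exactly $\bigcup_i P_i\times P_{g(i)}$; putting $A_i:=P_i$ and $B_i:=P_{g(i)}$ yields the stated form, with $\{A_i\}=\{B_i\}=\{P_1,\ldots,P_m\}$ a single partition indexed in two ways by $g$. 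Conversely, given $\rho=\bigcup_i A_i\times B_i$ with $\{A_i\}=\{B_i\}$ a common partition, I would relabel the blocks of that partition as $P_1,\ldots,P_m$ so that each $A_i$ is some $P_j$ and each $B_i=P_{g(i)}$ for a permutation $g$, recovering a blurred permutation; the normalisation $i\in P_i$ required in $(*)$ is only a choice of block labels and costs nothing.

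Next I would show the normal form implies the two equational rules. The key computation is that $\rho^{-1}\circ\rho$ and $\rho\circ\rho^{-1}$ are precisely the partition-equivalence relations of the two families: one checks directly that $x\,(\rho^{-1}\circ\rho)\,x'$ holds iff $\rho(x)\cap\rho(x')\neq\emptyset$ iff $x,x'$ lie in a common $A_i$, so $\rho^{-1}\circ\rho=\bigcup_i A_i\times A_i$, and symmetrically $\rho\circ\rho^{-1}=\bigcup_i B_i\times B_i$ (common preimage). Since $\{A_i\}=\{B_i\}$ as partitions, these two equivalence relations coincide, giving $\rho\circ\rho^{-1}=\rho^{-1}\circ\rho$; difunctionality $\rho\circ\rho^{-1}\circ\rho=\rho$ then follows immediately from $\rho=\bigcup_i A_i\times B_i$ (or from the lemma that every blurred permutation is difunctional).

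The substantive direction is that the two rules imply the normal form, and here the main work is to recover the block decomposition from difunctionality alone. The crucial claim is that $\rho$ is constant on the classes of $\rho^{-1}\circ\rho$: if $z\in\rho(x)\cap\rho(x')$ and $w\in\rho(x)$, then following the path $x'\to z$ by $\rho$, $z\to x$ by $\rho^{-1}$, and $x\to w$ by $\rho$ shows $w\in(\rho\circ\rho^{-1}\circ\rho)(x')=\rho(x')$, whence $\rho(x)=\rho(x')$ by symmetry. Thus the relation ``$\rho(x)\cap\rho(x')\neq\emptyset$'' (which is exactly $\rho^{-1}\circ\rho$, as above) coincides with ``$\rho(x)=\rho(x')$'' and is an equivalence relation; Lemma~\ref{symref} supplies reflexivity and symmetry, and difunctionality gives idempotence $(\rho\circ\rho^{-1})\circ(\rho\circ\rho^{-1})=\rho\circ\rho^{-1}$, confirming this. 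Letting $A_1,\ldots,A_k$ be its classes and $B_i:=\rho(A_i)$ the common images, the same implication forces the $B_i$ pairwise disjoint, totality gives $\bigcup A_i=X$ and surjectivity gives $\bigcup B_i=X$, so both families are partitions and $\rho=\bigcup_i A_i\times B_i$. Finally, by the computation of the preceding paragraph the classes of $\rho^{-1}\circ\rho$ are the $A_i$ and those of $\rho\circ\rho^{-1}$ are the $B_i$; the hypothesis $\rho\circ\rho^{-1}=\rho^{-1}\circ\rho$ equates these two equivalence relations, forcing $\{A_i\}=\{B_i\}$ and completing the normal form.

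I expect the block-decomposition step of the last paragraph to be the main obstacle: establishing that a difunctional multipermutation is constant on the $\rho^{-1}\circ\rho$-classes with pairwise disjoint images is exactly the classical structure theorem for difunctional relations, and care is needed to derive it cleanly from the equational definition rather than quoting it. Once that decomposition is in hand, the commuting rule does no more than collapse the two partitions into one, and all remaining implications are routine verifications.
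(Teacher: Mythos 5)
Your proof is correct, and its overall skeleton is the same as the paper's: route every description through the normal form $A_1\times B_1\cup\cdots\cup A_k\times B_k$, identify the two compositions $\rho\circ\rho^{-1}$ and $\rho^{-1}\circ\rho$ with the block-diagonal equivalence relations $\bigcup_i A_i\times A_i$ and $\bigcup_i B_i\times B_i$ (which is which depends on the composition convention, and is immaterial here), and let the commuting hypothesis collapse the two partitions into one. The one genuine difference is where the paper leans on an external result: for the direction ``the two equational rules imply the normal form'', the paper simply quotes the classical theorem of Riguet (as reported in Schein's paper) that a difunctional relation decomposes as $A_1\times B_1\cup\cdots\cup A_k\times B_k$ with the $A_i$ pairwise disjoint and the $B_i$ pairwise disjoint, whereas you re-prove that decomposition from scratch: your path argument $x'\to z\to x\to w$ showing that a difunctional $\rho$ is constant on the classes of the ``common image'' relation is exactly the right elementary derivation, and the pairwise disjointness of the images and the identification of the classes with the two partitions follow correctly from it. What your route buys is self-containedness, so the theorem no longer rests on a classical structure theorem cited without proof; what the paper's route buys is brevity. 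Two minor remarks. First, your appeals to Lemma~\ref{symref} and to idempotence of $\rho\circ\rho^{-1}$ are not needed: once ``$\rho(x)\cap\rho(x')\neq\emptyset$'' is shown to coincide with ``$\rho(x)=\rho(x')$'', the latter is manifestly an equivalence relation (reflexivity uses totality). Second, your parenthetical claim that the normalisation $i\in P_i$ in $(*)$ ``costs nothing'' is, read literally, false: the partition $\{1,2\},\{3,4\}$ of $[4]$ admits no labelling with $1\in P_1$ and $2\in P_2$. This, however, is a defect of the paper's definition rather than of your argument; the paper itself (in Lemma~\ref{maximal} and in the statement of the present theorem) treats arbitrary partitions as yielding blurred permutations, and its own proof makes the same silent identification that you do.
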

\begin{proof}
We can see in \cite{schein1987multigroups} a result attributed to J. Riguet '48,'51 that says that a binary relation is difunctional if and only if it is of the form $A_1\times B_1 \cup \cdots \cup A_k\times B_k$, with $A_1, \ldots, A_k$ all distinct and $B_1, \ldots, B_k$ all distinct. So we can say that a multipermutation  on $[n]$ is difunctional if and only if it is of the form $A_1\times B_1 \cup \cdots \cup A_k\times B_k$, with $\{A_1, \ldots A_k\}, \{B_1,\ldots, B_k\}$ partitions of $[n]$.
We now need to show that  $\{A_1, \ldots A_k\}= \{B_1,\ldots, B_k\}$. 

Let $\rho$ be a blurred permutation. Since it is difunctional it satisfies $\rho\circ \rho^{-1}\circ \rho=\rho$, so we just need to show it satisfies $\rho\circ \rho^{-1}=\rho^{-1}\circ \rho$. Suppose that $\rho$ is obtained from permutation $g$ and partition $A_1, \ldots, A_k$, then  $f^{-1}\circ f (A_i)= f^{-1}(A_{g(i)}) =A_{g^{-1}g(i)}=A_i$ and $f\circ f^{-1}(A_i)= f(A_{g^{-1}(i)})=A_{gg^{-1}(i)}=A_i$. Thus $f\circ f^{-1}=f^{-1}\circ f$. 

From this we can also see that the $A_1\times B_1 \cup \cdots \cup A_k\times B_k$ can be rewritten as $A_1\times A_{g(1)} \cup \cdots \cup A_k\times A_{g(k)}$, so it follows that  $\{A_1, \ldots A_k\}= \{B_1,\ldots, B_k\}$.

We now show the reverse implication. If $\rho= A_1\times B_1 \cup \cdots \cup A_k\times B_k$ with $\{A_1, \ldots A_k\}= \{B_1,\ldots, B_k\}$ we can see that it is a blurred permutation obtained from the permutation that sends $A_i$ to $B_i$. 

If we assume that $\rho$ is a multipermutation that satisfies $\rho=\rho \circ \rho^{-1}\circ \rho$ and $\rho\circ\rho^{-1}=\rho^{-1}\circ \rho$, we know it is difunctional, so $\rho =A_1\times B_1 \cup \cdots \cup A_k\times B_k$ , then 
$\rho^{-1}\circ \rho =B_1\times B_1 \cup \cdots B_k\cdots B_k$ and $\rho\circ \rho^{-1}= A_1\times A_1\cup A_k\times A_k$. It then follows that we must have $\{A_1, \ldots, A_k\}=\{B_1, \ldots, B_k\}$, so $\rho$ is a blurred permutation.

\end{proof}

In other words, blurred permutations are the completely regular difunctional multipermutations. For the definition of completely regular see for example \cite{clifford1941semigroups}.

It is tempting at this stage to compare blurred permutations with Hall's relations, the connection does not seem to be a direct one. It is easy to find a multipermutation that is not a Hall relation, so the best  we can say at this stage is that all symmetric blurred permutations are Hall's relations, as a direct consequence of Lemma \ref{diagonal}.

\bibliographystyle{acm}

\end{document}